\newtheorem{thm}{Theorem}[section]
\newtheorem{cor}[thm]{Corollary}
\newtheorem{lemma}[thm]{Lemma}
\theoremstyle{definition}
\numberwithin{equation}{section}
\newtheorem{prop}[thm]{Proposition}
\newcommand{\dbarb}{\overline{\partial}_b}
\newcommand{\zbar}{\overline{z}}
\newcommand{\Zbar}{\overline{Z}}
\newcommand{\tomega}{\tilde{\omega}}
\begin{document}





\title{A subelliptic Bourgain-Brezis inequality}

\author{Yi Wang}
\address{Department of Mathematics, Stanford University, Stanford, CA 94305}
\email{wangyi@math.stanford.edu}
\author{Po-Lam Yung}
\address{Department of Mathematics, Rutgers, the State University of New Jersey, Piscataway, NJ 08854}
\email{pyung@math.rutgers.edu}
\date{}



\begin{abstract}
We prove an approximation lemma on (stratified) homogeneous groups that allows one to approximate a function in the non-isotropic Sobolev space $\dot{NL}^{1,Q}$ by $L^{\infty}$ functions, generalizing a result of Bourgain-Brezis \cite{MR2293957}. We then use this to obtain a Gagliardo-Nirenberg inequality for $\dbarb$ on the Heisenberg group $\mathbb{H}^n$.

\end{abstract}

\maketitle

\section{Introduction} \label{sect:intro}

In this paper, we study some subelliptic compensation phenomena on homogeneous groups, that have to do with divergence, curl and the space $L^1$ of Lebesgue integrable functions or differential forms. In the elliptic cases they were discovered by Bourgain-Brezis, Lanzani-Stein and van Schaftingen around 2004. Also lying beneath our results is the failure of the critical Sobolev embedding of the non-isotropic Sobolev space $\dot{NL}^{1,Q}$ into $L^{\infty}$. In particular, we prove an approximation lemma that describes how functions in $\dot{NL}^{1,Q}$ can be approximated by functions in $L^{\infty}$.

To begin with, let us describe the elliptic results on $\mathbb{R}^n$ ($n \geq 2$) upon which our results are based. We denote by $d$ the Hodge-de Rham exterior derivative, and $d^*$ its (formal) adjoint. The theory discovered by Bourgain-Brezis, Lanzani-Stein and van Schaftingen consists of three major pillars, each best illustrated by a separate theorem. The first involves the solution of $d^*$:

\begin{thm}[Bourgain-Brezis \cite{MR2293957}] \label{thm:BB}
Suppose $q \ne n-1$. Then for any $q$-form $f$ with coefficients on $L^n(\mathbb{R}^n)$ that is in the image\footnote{By this we mean $f$ is the $d^*$ of some form with coefficients in $\dot{W}^{1,n}(\mathbb{R}^n)$, where $\dot{W}^{1,n}(\mathbb{R}^n)$ is the (homogeneous) Sobolev space of functions that have 1 derivative in $L^n(\mathbb{R}^n)$.} of $d^*$, there exists a $(q+1)$-form $Y$ with coefficients in $L^{\infty}(\mathbb{R}^n)$ such that $$d^*Y = f$$ in the sense of distributions, and $\|Y\|_{L^{\infty}(\mathbb{R}^n)} \leq C \|f\|_{L^n(\mathbb{R}^n)}.$
\end{thm}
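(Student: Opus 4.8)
The plan is to produce $Y$ by an iteration whose only substantial ingredient is a Bourgain--Brezis type approximation lemma; the rest is soft Hodge theory on $\mathbb{R}^n$. First, since $f$ lies in the image of $d^*$, say $f = d^*\alpha$ with $\alpha$ having coefficients in $\dot{W}^{1,n}$, we have $d^* f = (d^*)^2 \alpha = 0$. Writing $\Delta = dd^* + d^*d$ for the Hodge Laplacian, whose inverse $\Delta^{-1}$ acts componentwise as the Newtonian potential (interpreted via Riesz potentials), the $(q+1)$-form $u_0 := d\,\Delta^{-1}f$ satisfies $d^* u_0 = (\Delta - dd^*)\Delta^{-1}f = f - d\,\Delta^{-1}(d^*f) = f$, while each entry of $\nabla u_0$ is a double Riesz transform of a coefficient of $f$, so $\|u_0\|_{\dot{W}^{1,n}} \le C\|f\|_{L^n}$ by Calder\'on--Zygmund (here $n \ge 2$). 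In general $u_0 \notin L^\infty$ --- precisely the failure of $\dot{W}^{1,n} \hookrightarrow L^\infty$ --- so the heart of the matter is to correct $u_0$ to a bounded form without altering its $d^*$.

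This correction is supplied by an approximation lemma of the following shape, valid exactly because $q \ne n-1$: every $(q+1)$-form $v$ with coefficients in $\dot{W}^{1,n}$ splits as $v = Y' + w$ with $Y'$ having coefficients in $L^\infty \cap \dot{W}^{1,n}$, $\|Y'\|_{L^\infty} \le C\|v\|_{\dot{W}^{1,n}}$, and $\|d^* w\|_{L^n} \le \tfrac12 \|d^* v\|_{L^n}$. Applying it to $u_0$ yields a bounded $Y_0$ with $\|Y_0\|_{L^\infty} \le C\|f\|_{L^n}$ and $\|f - d^* Y_0\|_{L^n} \le \tfrac12\|f\|_{L^n}$; set $f_1 := f - d^* Y_0 = d^* w_0$, again a $q$-form in $L^n$ in the image of $d^*$, of half the norm. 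Iterating Steps 1--2 with $f_1, f_2, \dots$ in place of $f$ produces $(q+1)$-forms $Y_j$ with $L^\infty$ coefficients, $\|Y_j\|_{L^\infty} \le C\,2^{-j}\|f\|_{L^n}$, and $f = \sum_{j=0}^N d^* Y_j + f_{N+1}$ with $\|f_{N+1}\|_{L^n} \le 2^{-(N+1)}\|f\|_{L^n}$. Then $Y := \sum_{j\ge 0} Y_j$ converges in $L^\infty$ with $\|Y\|_{L^\infty} \le C\|f\|_{L^n}$, and passing to the limit in the distributional identity --- $L^\infty$- and $L^n$-convergence both imply convergence in $\mathcal{D}'$, on which $d^*$ is continuous --- gives $d^* Y = f$.

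Everything thus reduces to the approximation lemma, which is where $q \ne n-1$ is used and is truly needed. When $q = n-1$ the form $v$ is Hodge-dual to a single scalar function whose only $d^*$-preserving modifications are additive constants, so the lemma would collapse to the false statement $\dot{W}^{1,n} \hookrightarrow L^\infty$; and indeed the theorem itself fails for $q = n-1$, since then $d^* Y = f$ pins down the lone coefficient of $Y$ modulo a constant, which need not lie in $L^\infty$. To prove the lemma when $q \ne n-1$ one peels a bounded piece off $v$ while strictly contracting $\|d^* v\|_{L^n}$, which (following Bourgain--Brezis) is done by a Littlewood--Paley / stopping-time decomposition of $v$ organized by amplitude, using the ample ``gauge freedom'' in $\ker d^*$ available when $q + 1 < n$. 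The expected main obstacle --- and the point of this paper --- is to carry out this construction on a stratified homogeneous group, with non-isotropic dilations and the horizontal gradient in place of the Euclidean ones; controlling the non-commutativity and anisotropy scale by scale is the real work, with the Euclidean statement above serving as the model.
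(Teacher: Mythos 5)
Your argument is correct and follows essentially the same iteration scheme the paper uses for its subelliptic analog (Theorem~\ref{thm:soldbarbstrong}): produce a preliminary solution with controlled $\dot{W}^{1,n}$ norm, apply the scalar approximation lemma (Lemma~\ref{lem:approx}) componentwise with the spared direction chosen outside the multi-index $I$ (possible exactly when $q+1<n$, i.e.\ $q\ne n-1$), and iterate geometrically. The only structural difference is cosmetic: you build the preliminary solution explicitly as $d\Delta^{-1}f$ via Calder\'on--Zygmund, whereas the paper (working on $\mathbb{H}^n$, where Hodge theory is less explicit) invokes the bounded-inverse theorem; both yield $\|\nabla u_0\|_{L^n}\lesssim\|f\|_{L^n}$. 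One mis-statement you should fix: the form-level lemma as you phrase it --- $\|d^*w\|_{L^n}\le\tfrac12\|d^*v\|_{L^n}$ for \emph{arbitrary} $(q+1)$-forms $v$ with $\dot{W}^{1,n}$ coefficients --- is false; what Lemma~\ref{lem:approx} applied componentwise actually gives is $\|d^*(v-Y')\|_{L^n}\le\delta\|\nabla v\|_{L^n}$, and this controls $\|d^*v\|$ only when $\|\nabla v\|\lesssim\|d^*v\|$. Your Step~1 supplies exactly that extra estimate for $v=u_j=d\Delta^{-1}f_j$, and since you rebuild $u_{j+1}$ from $f_{j+1}$ at each stage the iteration closes; but the lemma should be stated with the $\delta$-gain against $\|\nabla v\|_{L^n}$, not $\|d^*v\|_{L^n}$.
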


In particular, we have
\begin{cor}[Bourgain-Brezis \cite{MR1949165}] \label{cor:BB}
For any function $f \in L^n(\mathbb{R}^n)$, there exists a vector field $Y$ with coefficients in $L^{\infty}(\mathbb{R}^n)$ such that $$\mathrm{div\,} Y = f$$ in the sense of distributions, and $\|Y\|_{L^{\infty}(\mathbb{R}^n)} \leq C \|f\|_{L^n(\mathbb{R}^n)}.$
\end{cor}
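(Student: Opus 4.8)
The plan is to deduce this directly from Theorem~\ref{thm:BB}, taking $q=0$. A function is a $0$-form, and a $1$-form on $\mathbb{R}^n$ may be identified with a vector field; under this identification $d^*$ on $1$-forms is, up to sign, the divergence operator, so producing a $1$-form $Y$ with $d^*Y=f$ is the same as producing (up to sign) a vector field $Y$ with $\mathrm{div}\,Y=f$. Since $n\geq 2$ we have $0\neq n-1$, so the only hypothesis of Theorem~\ref{thm:BB} that needs checking is that the $0$-form $f$ lies in the image of $d^*$, i.e.\ that $f=d^*\omega$ for some $1$-form $\omega$ whose coefficients lie in $\dot{W}^{1,n}(\mathbb{R}^n)$.

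First I would exhibit such an $\omega$. Let $u=(-\Delta)^{-1}f$ be the convolution of $f$ with the Newtonian potential, and let $\omega$ be the $1$-form with coefficients $\partial_j u$. Then $d^*\omega=-\mathrm{div}\,\omega=-\Delta u=f$ (up to the harmless sign dictated by the Hodge-theoretic conventions), while the second derivatives of the coefficients of $\omega$ are, up to constants, the double Riesz transforms $R_jR_k f$. Because $1<n<\infty$, the Riesz transforms are bounded on $L^n(\mathbb{R}^n)$, so $\|\nabla\omega\|_{L^n(\mathbb{R}^n)}\leq C\|f\|_{L^n(\mathbb{R}^n)}$; hence $\omega$ has coefficients in $\dot{W}^{1,n}(\mathbb{R}^n)$ (interpreted, as is standard for this homogeneous Sobolev space, modulo additive constants). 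This is precisely the hypothesis of Theorem~\ref{thm:BB} for the $0$-form $f$.

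Applying Theorem~\ref{thm:BB} then produces a $1$-form $Y$ with coefficients in $L^{\infty}(\mathbb{R}^n)$ satisfying $d^*Y=f$ and $\|Y\|_{L^{\infty}(\mathbb{R}^n)}\leq C\|f\|_{L^n(\mathbb{R}^n)}$. Identifying $Y$ with a vector field and unwinding the sign (replace $Y$ by $-Y$ if necessary, which affects neither the equation up to sign nor the norm bound) gives $\mathrm{div}\,Y=f$ with the stated estimate.

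There is no genuine obstacle here: all the analytic difficulty is already contained in Theorem~\ref{thm:BB}, and the corollary is just the observation that an arbitrary function in $L^n(\mathbb{R}^n)$ is trivially in the image of $d^*$ once one allows oneself the gain of one derivative supplied by the Calderón-Zygmund estimate for $\Delta$. The only points requiring a little care are the bookkeeping of signs in the Hodge conventions and the fact that $\dot{W}^{1,n}$ must be understood modulo constants; neither of these affects the $L^{\infty}$ and $L^n$ norms appearing in the statement.
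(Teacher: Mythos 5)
The paper gives no proof of this corollary — it simply states it with the phrase ``In particular, we have'' after Theorem~\ref{thm:BB}, treating the deduction as routine. Your argument supplies exactly the missing step: every $f\in L^n(\mathbb{R}^n)$ lies in the image of $d^*$ on $1$-forms with $\dot W^{1,n}$ coefficients, via $\omega=\nabla(-\Delta)^{-1}f$ and the $L^n$-boundedness of the double Riesz transforms, after which Theorem~\ref{thm:BB} with $q=0$ (valid since $0\neq n-1$ for $n\geq 2$) gives the desired bounded vector field. This is the natural and correct deduction, and there is no gap.
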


The second pillar is a Gagliardo-Nirenberg inequality for differential forms:

\begin{thm}[Lanzani-Stein \cite{MR2122730}] \label{thm:LS}
Suppose $u$ is a $q$-form on $\mathbb{R}^n$ that is smooth with compact support. We have $$\|u\|_{L^{n/(n-1)}(\mathbb{R}^n)} \leq C (\|du\|_{L^1(\mathbb{R}^n)} + \|d^*u\|_{L^1(\mathbb{R}^n)})$$ unless $d^*u$ is a function or $du$ is a top form. If $d^*u$ is a function, one needs to assume $d^*u = 0$; if $du$ is a top form, one needs to assume $du = 0$. Then the above inequality remains true.
\end{thm}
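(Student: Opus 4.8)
The plan is to deduce this inequality from Theorem~\ref{thm:BB} by a duality argument. Write $q$ for the degree of $u$; we may assume $1\le q\le n-1$, since for $q=0$ or $q=n$ one of the two pieces below is trivially zero and the statement collapses to the classical Gagliardo--Nirenberg--Sobolev inequality. As $u$ is smooth with compact support, $u\in L^{n/(n-1)}(\mathbb{R}^n)$, so by the $L^n$--$L^{n/(n-1)}$ duality together with the density of smooth compactly supported forms in $L^n(\mathbb{R}^n)$ it suffices to bound $\bigl|\int_{\mathbb{R}^n}\langle u,g\rangle\bigr|$ by $C\|g\|_{L^n(\mathbb{R}^n)}\bigl(\|du\|_{L^1(\mathbb{R}^n)}+\|d^*u\|_{L^1(\mathbb{R}^n)}\bigr)$ for every smooth compactly supported $q$-form $g$.

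Next I would split $g$ by the Hodge decomposition. For such $g$ the form $\Delta^{-1}g$ is smooth and decays, and since the Hodge Laplacian on $\mathbb{R}^n$ obeys $(dd^*+d^*d)\Delta^{-1}=\mathrm{Id}$ we may write $g=g_e+g_c$ with $g_e:=dd^*\Delta^{-1}g$ exact and $g_c:=d^*d\Delta^{-1}g$ coexact. The maps $g\mapsto g_e$ and $g\mapsto g_c$ are finite linear combinations of double Riesz transforms, hence bounded on $L^n(\mathbb{R}^n)$ (as $n\ge2$), so $\|g_e\|_{L^n}+\|g_c\|_{L^n}\le C\|g\|_{L^n}$; moreover the potentials $d\Delta^{-1}g$ and $d^*\Delta^{-1}g$ have gradients equal to order-zero singular integrals of $g$, hence lie in $\dot{W}^{1,n}$, so that Theorem~\ref{thm:BB} genuinely applies to $g_c$ and (after a Hodge star) to $g_e$.

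Now estimate the two halves, first assuming $q\notin\{1,n-1\}$. Since $g_c$ is a $q$-form in $L^n$ lying in the image of $d^*$ and $q\ne n-1$, Theorem~\ref{thm:BB} provides a $(q+1)$-form $Y_1$ with $L^\infty$ coefficients such that $d^*Y_1=g_c$ and $\|Y_1\|_{L^\infty}\le C\|g\|_{L^n}$; integrating by parts, $\bigl|\int\langle u,g_c\rangle\bigr|=\bigl|\int\langle du,Y_1\rangle\bigr|\le C\|g\|_{L^n}\|du\|_{L^1}$. For $g_e$, the Hodge star intertwines $d$ on $q$-forms with $\pm d^*$ on $(n-q)$-forms and is an $L^p$-isometry, so applying Theorem~\ref{thm:BB} to the $(n-q)$-form $\star g_e$ (legitimate since $n-q\ne n-1$) and starring back yields a $(q-1)$-form $Y_2$ with $L^\infty$ coefficients such that $dY_2=g_e$ and $\|Y_2\|_{L^\infty}\le C\|g\|_{L^n}$; then $\bigl|\int\langle u,g_e\rangle\bigr|=\bigl|\int\langle d^*u,Y_2\rangle\bigr|\le C\|g\|_{L^n}\|d^*u\|_{L^1}$. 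Adding the two bounds proves the inequality in this range.

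There remain the exceptional degrees, where exactly one half of the argument breaks down and the extra hypothesis repairs it. If $q=1$ (so $d^*u$ is a function) we are assuming $d^*u=0$; integrating by parts, $\int\langle u,g_e\rangle=\int\langle u,d(d^*\Delta^{-1}g)\rangle=\int\langle d^*u,\,d^*\Delta^{-1}g\rangle=0$, so only the $g_c$ term survives, which is controlled as above since $1\ne n-1$ when $n\ge3$. Symmetrically, if $q=n-1$ we assume $du=0$, and then $\int\langle u,g_c\rangle=\int\langle du,\,d\Delta^{-1}g\rangle=0$, leaving only the $g_e$ term; and in the borderline case $n=2$, $q=1$, both hypotheses are imposed, so $\Delta u=0$ and hence $u\equiv0$ by compact support. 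I expect this endpoint analysis to be the real obstacle: one must see that the restriction $q\ne n-1$ in Theorem~\ref{thm:BB} blocks the coexact (resp. exact) half precisely when $d^*u$ is a function (resp. $du$ is a top form), and that the hypothesis $d^*u=0$ (resp. $du=0$) is simultaneously what annihilates the blocked term and what the statement genuinely requires---without it the inequality fails, as one checks by truncating $u=d\,\Delta^{-1}\rho$ with $\rho$ an approximate identity. The remaining points---the Hodge decomposition, the $L^n$ boundedness of the Calder\'on--Zygmund operators $dd^*\Delta^{-1}$ and $d^*d\Delta^{-1}$, and the passage from smooth compactly supported $g$ to general $g$ by density---are routine.
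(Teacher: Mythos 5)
Your proof is correct, and it is precisely the duality argument the paper alludes to: the paper cites Lanzani--Stein for Theorem~\ref{thm:LS} without reproving it, but remarks that Theorems~\ref{thm:BB}, \ref{thm:LS} and~\ref{thm:vs} are all equivalent by duality, and your deduction of Theorem~\ref{thm:LS} from Theorem~\ref{thm:BB} via the Hodge split $g=g_e+g_c$, the Hodge star to treat the exact piece, and the endpoint analysis for $q\in\{1,n-1\}$ is the standard instantiation of that equivalence.

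One wording slip in your final paragraph, which transposes the two exceptional cases: you write that ``the restriction $q\ne n-1$ in Theorem~\ref{thm:BB} blocks the coexact (resp.\ exact) half precisely when $d^*u$ is a function (resp.\ $du$ is a top form),'' but the correct pairing is the opposite. The coexact piece $g_c$ is a $q$-form to which Theorem~\ref{thm:BB} applies directly, so it is obstructed exactly when $q=n-1$, i.e.\ when $du$ is a top form; the exact piece is handled via $\star g_e$, of degree $n-q$, so it is obstructed when $n-q=n-1$, i.e.\ $q=1$, i.e.\ when $d^*u$ is a function. Your concrete case analysis in the preceding paragraph already gets this right (you use $d^*u=0$ to annihilate $\int\langle u,g_e\rangle$ when $q=1$, and $du=0$ to annihilate $\int\langle u,g_c\rangle$ when $q=n-1$); only the summary sentence has the labels swapped.
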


Since $d$ of a 1-form is its curl and $d^*$ of a 1-form is its divergence, this is sometimes called a div-curl inequality.

The third theorem is the following compensation phenomenon:

\begin{thm}[van Schaftingen \cite{MR2078071}] \label{thm:vs}
If $u$ is a $C^{\infty}_c$ 1-form on $\mathbb{R}^n$ with $d^*u = 0$, then for any 1-form $\phi$ with coefficients in $C^{\infty}_c(\mathbb{R}^n)$, we have
$$\int_{\mathbb{R}^n} u \cdot \phi dx \leq C \|u\|_{L^1(\mathbb{R}^n)} \|\phi\|_{\dot{W}^{1,n}(\mathbb{R}^n)}.$$
\end{thm}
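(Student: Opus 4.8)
The statement is van Schaftingen's inequality, and the plan is to follow his slicing argument, which spends the hypothesis $d^{*}u=0$ one coordinate direction at a time rather than through a single global potential.

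First I would make two routine reductions. By mollifying (which commutes with $d^{*}$) and cutting off, it suffices to treat $u,\phi\in C_{c}^{\infty}$ with $d^{*}u=0$. Since $d^{*}u=0$, the pairing $\int u\cdot\phi$ is unchanged when $\phi$ is replaced by $\phi-d\pi$ for any function $\pi$; taking $\pi=\Delta^{-1}d^{*}\phi$ one may additionally assume $d^{*}\phi=0$, which does not increase $\|\phi\|_{\dot{W}^{1,n}}$ and symmetrizes the problem. One also has $\int_{\mathbb{R}^{n}}u\,dx=0$ (a compactly supported co-closed $1$-form on $\mathbb{R}^{n}$, $n\ge 2$, is $d^{*}$ of a compactly supported $2$-form), so constants may be subtracted from $\phi$ as well; this is why the natural homogeneous norm $\|\phi\|_{\dot{W}^{1,n}}$ appears on the right.

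The core is the following representation. Because $\partial_{x_{j}}u_{j}=-\sum_{k\ne j}\partial_{x_{k}}u_{k}$, integrating in the variable $x_{j}$ and then integrating by parts in $x_{k}$ (legitimate since $\phi$ is compactly supported) gives
\[
\int_{\mathbb{R}^{n}}u\cdot\phi\,dx\;=\;\sum_{j}\sum_{k\ne j}\int_{\mathbb{R}^{n}}U^{(j)}_{k}(x)\,\partial_{x_{k}}\phi_{j}(x)\,dx,\qquad U^{(j)}_{k}(x):=\int_{-\infty}^{x_{j}}u_{k}(x_{1},\dots,t,\dots,x_{n})\,dt.
\]
Each $U^{(j)}_{k}$ is dominated pointwise by $\int_{\mathbb{R}}|u(x_{1},\dots,t,\dots,x_{n})|\,dt$, a function of the remaining $n-1$ variables whose $L^{1}(\mathbb{R}^{n-1})$ norm is at most $\|u\|_{L^{1}(\mathbb{R}^{n})}$, while $\int_{\mathbb{R}}\partial_{x_{k}}\phi_{j}\,dx_{k}=0$ kills the $x_{j}$-independent tail of $U^{(j)}_{k}$. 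The remaining and delicate task is to combine the $n$ coordinate directions: one organizes the sum so that a product of these one-dimensional $L^{1}$ quantities, one per direction, is paired against $|D\phi|$, and then invokes the Loomis--Whitney / Gagliardo multilinear inequality -- the same inequality underlying $\|f\|_{L^{n/(n-1)}}\le\prod_{i=1}^{n}\|\partial_{i}f\|_{L^{1}}^{1/n}$ -- to obtain exactly $C\|u\|_{L^{1}}\|D\phi\|_{L^{n}}=C\|u\|_{L^{1}}\|\phi\|_{\dot{W}^{1,n}}$.

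The main obstacle, and the reason one is forced into this slicing, is the failure of endpoint Calder\'on--Zygmund theory. The tempting direct route -- write $u=d^{*}\omega$ for a single $2$-form $\omega$, so that $\int u\cdot\phi=\int\langle\omega,d\phi\rangle\le\|\omega\|_{L^{n/(n-1)}}\|d\phi\|_{L^{n}}$, and bound $\|\omega\|_{L^{n/(n-1)}}\le C\|\nabla\omega\|_{L^{1}}\le C\|d^{*}\omega\|_{L^{1}}=C\|u\|_{L^{1}}$ -- collapses at the last inequality, which is an $L^{1}$ Calder\'on--Zygmund estimate and is false. So the divergence-free condition has to be used slice by slice, and the subtle point is that the bookkeeping over the $n$ directions closes up precisely at the scaling-critical exponent $n$, rather than losing a logarithm or landing only in weak $L^{n,\infty}$; making this happen is exactly what the multilinear Loomis--Whitney inequality, as opposed to a single application of H\"older, accomplishes.
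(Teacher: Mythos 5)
The paper does not prove this statement; it cites van Schaftingen's original note and uses the result as background, so there is no in-paper proof to compare against. I will therefore assess your proposal on its own terms.

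Your preliminary reductions and your representation formula are correct. In particular, using $\partial_{j}u_{j}=-\sum_{k\ne j}\partial_{k}u_{k}$ to write $u_{j}=-\sum_{k\ne j}\partial_{k}U^{(j)}_{k}$ and integrating by parts once in $x_{k}$ gives, exactly as you say,
\[
\int_{\mathbb{R}^{n}}u\cdot\phi\,dx=\sum_{j}\sum_{k\ne j}\int_{\mathbb{R}^{n}}U^{(j)}_{k}\,\partial_{k}\phi_{j}\,dx ,
\]
with $|U^{(j)}_{k}(x)|\le g_{j}(\widehat{x_{j}}):=\int_{\mathbb{R}}|u(\ldots,t,\ldots)|\,dt$ and $\|g_{j}\|_{L^{1}(\mathbb{R}^{n-1})}\le\|u\|_{L^{1}(\mathbb{R}^{n})}$. (Also, your remark about the $x_{j}$-independent tail is correct only after summing over $k\ne j$: $\sum_{k\ne j}\partial_{k}U^{(j)}_{k}(\infty)=-\int_{\mathbb{R}}\partial_{j}u_{j}\,dx_{j}=0$, so the tails cancel in aggregate but not term by term, and the cancellation has nothing to do with $\int\partial_{k}\phi_{j}\,dx_{k}=0$ as written.)

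The genuine gap is the final step, where you say ``one organizes the sum so that a product of these one-dimensional $L^{1}$ quantities, one per direction, is paired against $|D\phi|$, and then invokes Loomis--Whitney.'' As it stands this does not work, and the obstruction is structural. A single term of your sum has the form $\int_{\mathbb{R}^{n-1}}g_{j}(y)\,\Psi_{j,k}(y)\,dy$ with $\Psi_{j,k}(y)=\int_{\mathbb{R}}|\partial_{k}\phi_{j}(y,x_{j})|\,dx_{j}$, and there is no H\"older pair of exponents that closes: taking $\|g_{j}\|_{L^{1}}$ forces $\|\Psi_{j,k}\|_{L^{\infty}(\mathbb{R}^{n-1})}$, which is not controlled by $\|\nabla\phi\|_{L^{n}}$, while $\|\Psi_{j,k}\|_{L^{n}(\mathbb{R}^{n-1})}$ is controlled only by the mixed norm $\|\partial_{k}\phi_{j}\|_{L^{1}_{x_{j}}L^{n}_{\widehat{x_{j}}}}$, not by $\|\partial_{k}\phi_{j}\|_{L^{n}(\mathbb{R}^{n})}$. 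So the estimate cannot be made term by term. Moreover, the expression $\sum_{j,k}\int g_{j}\,\partial_{k}\phi_{j}$ is a \emph{sum} over $(j,k)$ of integrals each containing a single $g_{j}$; it is not a single integral of a \emph{product} $\prod_{i}g_{i}(\widehat{x_{i}})^{a_{i}}$ over the coordinate directions, which is what Loomis--Whitney or Gagliardo would require. You would need to explain how the sum rearranges into such a product, and I do not see how it can. The actual proof in van Schaftingen's note avoids this pitfall: the key lemma there is a quantitative trace inequality on a coordinate hyperplane, proved by using the one-dimensional fundamental theorem of calculus in \emph{all} $n$ directions simultaneously on a single slice and only then applying Loomis--Whitney/Gagliardo on that slice; one direction yields the $\|u\|_{L^{1}}$ factor (via the divergence-free condition) and the remaining $n-1$ yield the $\|\nabla\phi\|_{L^{n}}$ factor with the correct exponent. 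Your representation formula is compatible with that strategy, but the crucial combinatorial lemma must be stated and proved rather than waved at; in its current form the argument does not close.
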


If $\dot{W}^{1,n}(\mathbb{R}^n)$ were embedded into $L^{\infty}(\mathbb{R}^n)$, Theorem \ref{thm:BB} would be trivial by Hodge decomposition, and so would be Theorem \ref{thm:vs} by H\"{o}lder's inequality. It is remarkable that these theorems remain to hold even though the desired Sobolev embedding fails.

It turns out all three theorems above are equivalent by duality. van Schaftingen \cite{MR2078071} gave a beautiful elementary proof of Theorem \ref{thm:vs}, thereby proving all of them.

We mention here that these results seem to be quite different from the more classical theory of compensated compactness; no connection between them is known so far.

We also refer the reader to the work of Brezis-van Schaftingen \cite{MR2332419}, Chanillo-van Schaftingen~\cite{MR2511628}, Maz'ya \cite{MR2578609}, Mironescu \cite{MR2645163}, Mitrea-Mitrea \cite{MR2470831}, van Schaftingen \cite{MR2443922}, \cite{MR2550188} and Amrouche-Nguyen \cite{MR2771258} for some interesting results related to these three theorems. In particular, Chanillo-van Schaftingen proved in \cite{MR2511628} a generalization of Theorem~\ref{thm:vs} to general homogeneous groups.

On the other hand, in \cite{MR2293957}, Bourgain-Brezis proved the following remarkable theorem, strengthening all three theorems above:
\begin{thm}[Bourgain-Brezis \cite{MR2293957}]
In Theorem~\ref{thm:BB} and Corollary~\ref{cor:BB}, the space $L^{\infty}(\mathbb{R}^n)$ can be replaced by the smaller Banach space $L^{\infty}(\mathbb{R}^n) \cap \dot{W}^{1,n}(\mathbb{R}^n)$. Also, in Theorem~\ref{thm:LS} and~\ref{thm:vs}, the spaces $L^1(\mathbb{R}^n)$ can be replaced by the bigger Banach space $L^1(\mathbb{R}^n) + (\dot{W}^{1,n}(\mathbb{R}^n))^*$. (Here $X^*$ denotes the dual of a Banach space $X$.)
\end{thm}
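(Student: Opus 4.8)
The plan is to move everything, by duality, to a single inequality, and then to prove that inequality by combining an approximation lemma for $\dot{W}^{1,n}(\mathbb{R}^n)$ with the (un-strengthened) results already at our disposal.

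\emph{Duality.} A standard computation identifies the dual of the Banach space $L^{\infty}(\mathbb{R}^n) \cap \dot{W}^{1,n}(\mathbb{R}^n)$, normed by $\max(\|\cdot\|_{L^{\infty}}, \|\cdot\|_{\dot{W}^{1,n}})$, with $L^{1}(\mathbb{R}^n) + (\dot{W}^{1,n}(\mathbb{R}^n))^*$ carrying the infimal-convolution norm --- the dual of an intersection being the sum of the duals. Hence the two assertions of the theorem are dual to one another, just as the four results recalled above are mutually equivalent by duality: Hodge decomposition turns the solvability statements for $d^*$ into the div--curl estimates, and the Hahn--Banach theorem turns ``there is a solution $Y$ lying in the smaller space $L^{\infty} \cap \dot{W}^{1,n}$'' into ``the functional $\phi \mapsto \int u \cdot \phi$ is bounded with respect to the larger norm $\|u\|_{L^1 + (\dot{W}^{1,n})^*}$''. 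So it suffices to prove the strengthened van Schaftingen inequality: there is $C$ so that for every $C^{\infty}_c$ $1$-form $u$ on $\mathbb{R}^n$ with $d^*u = 0$ and every $C^{\infty}_c$ $1$-form $\phi$,
\[
  \Big| \int_{\mathbb{R}^n} u \cdot \phi \, dx \Big| \le C \, \|\phi\|_{\dot{W}^{1,n}(\mathbb{R}^n)} \, \|u\|_{L^{1}(\mathbb{R}^n) + (\dot{W}^{1,n}(\mathbb{R}^n))^*}.
\]

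\emph{The approximation lemma.} The engine is the statement that every $\phi \in \dot{W}^{1,n}(\mathbb{R}^n)$ can be split, at each height $\lambda > 0$, as $\phi = \phi_{\lambda} + \psi_{\lambda}$ with $\phi_{\lambda} \in L^{\infty} \cap \dot{W}^{1,n}$, $\|\phi_{\lambda}\|_{L^{\infty}} \le C\lambda$, $\|\phi_{\lambda}\|_{\dot{W}^{1,n}} \le C\|\phi\|_{\dot{W}^{1,n}}$, and $\|\psi_{\lambda}\|_{\dot{W}^{1,n}} \to 0$ as $\lambda \to \infty$. I would prove this from a Littlewood--Paley decomposition $\phi = \sum_{j \in \mathbb{Z}} \phi_j$: by Bernstein's inequality each $\|\phi_j\|_{L^{\infty}} \lesssim \|\phi\|_{\dot{W}^{1,n}}$, so the sum of a band of roughly $\lambda/\|\phi\|_{\dot{W}^{1,n}}$ consecutive frequencies is bounded of the right size and has comparable $\dot{W}^{1,n}$ norm, while the complementary low- and high-frequency tails are small in $\dot{W}^{1,n}$ by dominated convergence applied to the Littlewood--Paley square function. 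One must be careful not to ask for too much here: strengthening ``$\|\psi_{\lambda}\|_{\dot{W}^{1,n}} \to 0$'' to a quantitative rate like $(\|\phi\|_{\dot{W}^{1,n}}/\lambda)^{\alpha}\|\phi\|_{\dot{W}^{1,n}}$ while keeping the two bounds on $\phi_{\lambda}$ would, on iteration, force the (false) Sobolev embedding $\dot{W}^{1,n} \hookrightarrow L^{\infty}$; so the decay of the remainder must be permitted to depend on $\phi$ itself and not merely on $\|\phi\|_{\dot{W}^{1,n}}$.

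\emph{Putting it together, and the main obstacle.} Fix $u$ with $d^*u = 0$ and $\phi$, together with a near-optimal splitting $u = u_1 + u_2$, $u_1 \in L^1$, $u_2 \in (\dot{W}^{1,n})^*$. Applying the approximation lemma to $\phi$ one writes $\int u \cdot \phi = \int u \cdot \phi_{\lambda} + \int u \cdot \psi_{\lambda}$; the first term splits as $\int u_1 \cdot \phi_{\lambda} + \int u_2 \cdot \phi_{\lambda}$, estimated by $\lambda\|u_1\|_{L^1} + C\|u_2\|_{(\dot{W}^{1,n})^*}\|\phi\|_{\dot{W}^{1,n}}$, while for the second term the \emph{un-strengthened} Theorem~\ref{thm:vs} (available because $d^*u = 0$) gives $|\int u \cdot \psi_{\lambda}| \le C\|u\|_{L^1}\|\psi_{\lambda}\|_{\dot{W}^{1,n}}$, which tends to $0$ as $\lambda \to \infty$. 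As it stands this does not close: making the remainder term small forces $\lambda$ large, and then $\lambda\|u_1\|_{L^1}$ blows up. The correct execution replaces the single splitting by an iteration over dyadic heights, peeling off bounded pieces $\phi^{(k)}$ and residuals $\psi^{(k)} \to 0$ so that $\phi = \sum_k \phi^{(k)}$; the sums $\sum_k \int u_2 \cdot \phi^{(k)}$ and the residual pairings with $u$ are controlled by the $\dot{W}^{1,n}$-geometry built into the decomposition and by Theorem~\ref{thm:vs}, but the pairings $\sum_k \int u_1 \cdot \phi^{(k)}$ must be handled by a delicate bookkeeping of the heights $\lambda_k$ that again exploits the condition $d^*u = 0$. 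I expect this last point --- reconciling the unavoidable growth of $\|\phi^{(k)}\|_{L^{\infty}}$ with summability of the pairing against $u_1 \in L^1$ --- to be the crux of the whole argument, and it is precisely where the sharp form of the approximation lemma (in the present paper, its non-isotropic analogue on homogeneous groups) is indispensable.
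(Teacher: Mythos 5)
Your duality bookkeeping is sound --- the dual of $L^{\infty}\cap\dot{W}^{1,n}$ with the max-norm is indeed $L^{1}+(\dot{W}^{1,n})^{*}$ with the infimal-convolution norm, and the four strengthened statements are mutually equivalent. But the paper (following Bourgain--Brezis) runs the duality the \emph{other} way: it proves the constructive solvability result (the strengthened Theorem~\ref{thm:BB}) directly and then obtains the strengthened Theorems \ref{thm:LS} and \ref{thm:vs} as corollaries of that. Your proposal tries to attack the dual, van Schaftingen--type inequality head on, and you yourself note that this does not close: the error analysis forces $\lambda$ to be large precisely where you need it small. That is not a detail to be patched by ``delicate bookkeeping'' --- it is the signal that you are attacking the wrong endpoint of the duality.

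The deeper gap is in the approximation lemma itself. What you state --- $\phi=\phi_{\lambda}+\psi_{\lambda}$ with $\|\phi_{\lambda}\|_{L^{\infty}}\lesssim\lambda$, $\|\phi_{\lambda}\|_{\dot{W}^{1,n}}\lesssim\|\phi\|_{\dot{W}^{1,n}}$ and $\|\psi_{\lambda}\|_{\dot{W}^{1,n}}\to 0$ qualitatively --- controls \emph{all} $n$ derivatives of the remainder, and you then correctly observe that one cannot promote this to a rate uniform in $\phi$ without contradicting the failure of $\dot{W}^{1,n}\hookrightarrow L^{\infty}$. But the conclusion you draw (``the decay must be permitted to depend on $\phi$'') is exactly backwards from what Bourgain--Brezis do. Their Lemma~\ref{lem:approx} trades a derivative for a rate: given $\delta>0$ there is $F\in L^{\infty}\cap\dot{W}^{1,n}$ with
\[
\sum_{i=2}^{n}\|\partial_{i}(f-F)\|_{L^{n}}\le\delta\,\|\nabla f\|_{L^{n}},\qquad
\|F\|_{L^{\infty}}+\|\nabla F\|_{L^{n}}\le C_{\delta}\,\|\nabla f\|_{L^{n}},
\]
with $\delta$ and $C_{\delta}$ \emph{uniform} over $f$, but with the approximation estimate covering only $n-1$ of the $n$ directions. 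That uniformity is indispensable for the contraction/iteration in the constructive proof of Theorem~\ref{thm:BB}: first solve $d^{*}\alpha^{(0)}=f$ with $\alpha^{(0)}\in\dot{W}^{1,n}$; then, because $q\ne n-1$, each component of $\alpha^{(0)}$ has at least one coordinate derivative that never appears in $d^{*}\alpha^{(0)}$, so one can orient the approximation lemma so that the single uncontrolled direction is irrelevant; the resulting $\beta^{(0)}\in L^{\infty}\cap\dot{W}^{1,n}$ then satisfies $\|f-d^{*}\beta^{(0)}\|_{L^{n}}\le\frac{1}{2}\|f\|_{L^{n}}$ and $\|\beta^{(0)}\|_{L^{\infty}}+\|\nabla\beta^{(0)}\|_{L^{n}}\le A\|f\|_{L^{n}}$; iterate and sum the geometric series. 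This is precisely the argument the present paper carries out on $\mathbb{H}^{n}$ in the proof of Theorem~\ref{thm:soldbarbstrong}, with Lemma~\ref{lem:approxsub} playing the role of Lemma~\ref{lem:approx}. Without the $n-1$-out-of-$n$ structure and the uniform constant, your lemma has no handle on the algebraic degeneracy of $d^{*}$ and cannot drive an iteration, which is why your attempt stalls.
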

They proved this by giving a direct constructive proof of the analog of Theorem~\ref{thm:BB}, where the space $L^{\infty}(\mathbb{R}^n)$ is replaced by $L^{\infty}(\mathbb{R}^n) \cap \dot{W}^{1,n}(\mathbb{R}^n)$; they then deduced the rest by duality. In the former they used the following approximation lemma, which is another remedy of the failure of the critical Sobolev embedding, and which is of independent interest:

\begin{lemma}[Bourgain-Brezis \cite{MR2293957}] \label{lem:approx}
Given any $\delta > 0$ and any function $f \in \dot{W}^{1,n}(\mathbb{R}^n)$, there exist a function $F \in L^{\infty}(\mathbb{R}^n) \cap \dot{W}^{1,n}(\mathbb{R}^n)$ and a constant $C_{\delta} > 0$, with $C_{\delta}$ independent of $f$, such that $$\sum_{i=2}^n \|\partial_i f - \partial_i F\|_{L^n(\mathbb{R}^n)} \leq \delta \|\nabla f\|_{L^n(\mathbb{R}^n)}$$ and $$\|F\|_{L^{\infty}(\mathbb{R}^n)} + \|\nabla F\|_{L^n(\mathbb{R}^n)} \leq C_{\delta} \|\nabla f\|_{L^n(\mathbb{R}^n)}.$$
\end{lemma}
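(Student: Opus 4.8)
The plan is to build $F$ by a careful mollification of $f$ that is adapted to the size of $\nabla f$, so that the troublesome set where $\nabla f$ is large gets smoothed out on a coarse scale (killing the $L^\infty$ norm) while the set where $\nabla f$ is small is barely touched (so the derivatives in the directions $\partial_2,\dots,\partial_n$ change by little). More precisely, for a parameter $\lambda>0$ to be chosen in terms of $\delta$, I would first decompose $f = f_{\text{low}} + f_{\text{high}}$ in frequency, or alternatively decompose the physical space according to a Whitney-type or dyadic stopping-time decomposition driven by the Hardy-Littlewood maximal function $M(|\nabla f|^n)$. On the ``good'' region, where this maximal function is $\lesssim \lambda^n$, $f$ is already essentially bounded by $\lambda$ times the local scale by the standard pointwise estimate $|f(x)-f(y)|\lesssim |x-y|\,(M|\nabla f|^n)^{1/n}$; on the ``bad'' open set $U_\lambda=\{M(|\nabla f|^n)>\lambda^n\}$, whose measure is $\lesssim \lambda^{-n}\|\nabla f\|_{L^n}^n$ by the weak-type $(1,1)$ bound, I would replace $f$ by a smoothed version that is constant (or slowly varying) at the Whitney scale of $U_\lambda$.

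The key steps, in order, are: (1) fix the decomposition above and define $F$ by gluing the unchanged $f$ outside $U_\lambda$ to a mollified $f$ inside $U_\lambda$, using a partition of unity subordinate to the Whitney cubes; (2) prove $\|F\|_{L^\infty}\le C\lambda$ — on the good set this is the maximal-function pointwise bound, and on the bad set one must subtract a locally constant function (an affine renormalization on each Whitney cube) and control the oscillation, the point being that the sum $\sum\|\nabla f\|_{L^n(Q)}$ over Whitney cubes $Q$ of $U_\lambda$ is under control; (3) prove the $L^n$ control of the full gradient, $\|\nabla F\|_{L^n}\le C\|\nabla f\|_{L^n}$, which follows because mollification is bounded on $L^n$ and the Whitney cubes have bounded overlap; (4) prove the crucial smallness $\sum_{i=2}^n\|\partial_i f - \partial_i F\|_{L^n}\le \delta\|\nabla f\|_{L^n}$ — since $f=F$ outside $U_\lambda$, the difference is supported on $U_\lambda$, and on $U_\lambda$ one estimates $\partial_i f - \partial_i F$ by $\nabla f$ plus the mollification error, both of which are bounded in $L^n(U_\lambda)$ by $\|\nabla f\|_{L^n}$; the gain of $\delta$ comes from choosing $\lambda$ large so that $\|\nabla f\|_{L^n(U_\lambda)}$ is small (dominated convergence, since $|U_\lambda|\to 0$), and $C_\delta\sim\lambda$ blows up as $\delta\to 0$, as expected.

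I anticipate the main obstacle to be step~(4) combined with the requirement that only $n-1$ of the $n$ partial derivatives need be close: one does not get smallness of $\|\partial_1 f-\partial_1 F\|_{L^n}$ for free (indeed it can be comparable to $\|\nabla f\|_{L^n}$), and this anisotropy must be respected when one chooses the mollification — the smoothing should be done so that the loss is absorbed into a single privileged direction. Concretely, rather than an isotropic mollifier one can use an averaging that is an antiderivative in the $x_1$ variable of a smoothed function (so that $F$ is a genuine primitive in that direction and $\partial_1 F$ need not be small), which is exactly the device Bourgain-Brezis exploit; getting the bookkeeping of this one-directional smoothing to coexist cleanly with the Whitney decomposition of $U_\lambda$ is the delicate point. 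The remaining estimates (boundedness of mollification on $L^n$, finite overlap of Whitney cubes, weak-type bound for the maximal function) are standard and I would only invoke them.
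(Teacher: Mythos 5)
The proof in the cited Bourgain--Brezis paper (and the present paper's own generalization, Lemma~\ref{lem:approxsub}, proved in Sections~\ref{sect:alg}--\ref{sect:g0}) proceeds in frequency space: one takes the Littlewood--Paley pieces $\Delta_j f$, builds anisotropic controlling functions $\omega_j$ whose derivatives in the $n-1$ good coordinate directions carry an extra small factor $2^{-\sigma}$, and enforces the $L^{\infty}$ bound on the approximant through the telescoping-product truncation $\tilde h = \sum_j h_j \prod_{j' > j}(1-U_{j'})$, whose pointwise boundedness is the content of Proposition~\ref{prop:algest}. Your proposal is a genuinely different physical-space route via a Whitney decomposition of the superlevel set $U_\lambda$ of $M(|\nabla f|^n)$, but as written it has a structural gap.

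The step that fails is~(4): you choose $\lambda$ large so that $\|\nabla f\|_{L^n(U_\lambda)}$ is small, invoking dominated convergence. That makes the bad-set contribution small for a \emph{fixed} $f$ with no quantitative rate, whereas the lemma requires $\lambda$ --- and with it $C_\delta \sim \lambda$ --- to be independent of $f$. Scale invariance is the explicit obstruction: setting $f_R(x) := f(Rx)$ preserves $\|\nabla f\|_{L^n(\mathbb{R}^n)}$, and one computes $M(|\nabla f_R|^n)(x) = R^n M(|\nabla f|^n)(Rx)$, whence $\|\nabla f_R\|_{L^n(U_\lambda(f_R))} = \|\nabla f\|_{L^n(U_{\lambda/R}(f))} \to \|\nabla f\|_{L^n(\mathbb{R}^n)}$ as $R \to \infty$. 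So for every fixed $\lambda$ there are normalized $f$ whose gradient mass sits almost entirely on the bad set, and no choice $\lambda = \lambda(\delta)$ uniform in $f$ exists; conversely choosing $\lambda = \lambda(f)$ makes $C_\delta$ depend on $f$, which the statement forbids. A secondary problem is in step~(2): the maximal-function estimate only gives the Lipschitz bound $|f(x)-f(y)| \lesssim \lambda|x-y|$ off $U_\lambda$, not boundedness of $f$ itself (Lipschitz functions on $\mathbb{R}^n$ are unbounded), so the per-cube affine renormalizations would have to be glued consistently across the whole space, which your construction does not address. The paper's telescoping product handles both issues at once: each Littlewood--Paley piece is individually $L^{\infty}$-bounded by $\|\nabla f\|_{L^Q}$ via the Bernstein-type estimate (Proposition~\ref{prop:Bern}), and the factor $\prod_{j'>j}(1-U_{j'})$ cuts off the sum so that $|\tilde h|$ is bounded pointwise without any global normalization, with constants that are manifestly scale-invariant.
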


Here one should think of $F$ as an $L^{\infty}(\mathbb{R}^n) \cap \dot{W}^{1,n}(\mathbb{R}^n)$ function whose derivatives approximate those of the given $f$ in all but one direction.

In this paper, we prove an analog of the above approximation lemma on any homogeneous group $G$. To describe our result we need some notations. First, let $\mathfrak{g}$ be a Lie algebra (over $\mathbb{R}$) that is \emph{graded}, in the sense that $\mathfrak{g}$ admits a decomposition $$\mathfrak{g} = V_1 \oplus V_2 \oplus \dots \oplus V_m$$ into direct sums of subspaces $V_1, \dots, V_m$ of $\mathfrak{g}$ such that $$[V_{j_1}, V_{j_2}] \subseteq V_{j_1+j_2}$$ for all $j_1$, $j_2$, where $V_j$ is understood to be zero if $j > m$. We assume that $V_m \ne \{0\}$. It is immediate that $\mathfrak{g}$ is nilpotent of step $m$. We introduce a natural family of dilations on $\mathfrak{g}$, by letting $$\lambda \cdot v = \lambda v_1 + \lambda^2 v_2 + \dots + \lambda^m v_m$$ if $v = v_1 + \dots + v_m$, $v_i \in V_i$ and $\lambda > 0$. This defines a one-parameter family of algebra automorphisms of $\mathfrak{g}$. Furthermore, we assume that $\mathfrak{g}$ is \emph{stratified}, in the sense that $V_1$ generates $\mathfrak{g}$ as a Lie algebra. Let $G$ be the connected and simply connected Lie group whose Lie algebra is $\mathfrak{g}$. Such a Lie group $G$ with stratified $\mathfrak{g}$ is then called a \emph{homogeneous group}. It carries a one-parameter family of automorphic dilations, given by $\lambda \cdot \exp(v) := \exp(\lambda \cdot v)$ where $\exp \colon \mathfrak{g} \to G$ is the exponential map. In the sequel we fix such a group $G$.

Now define the homogeneous dimension $Q$ of $G$ by $$Q := \sum_{j=1}^m j \cdot n_j$$ where $n_j := \dim V_j$. We also pick a basis $X_1$, $\dots$, $X_{n_1}$ of $V_1$. Any linear combination of these will then be a left-invariant vector field of degree 1 on $G$. If $f$ is a function on $G$, we define its subelliptic gradient as the $n_1$-tuple $$\nabla_b f := (X_1 f, \dots, X_{n_1} f).$$ The homogeneous non-isotropic Sobolev space $\dot{NL}^{1,Q}(G)$ is then the space of functions on $G$ whose subelliptic gradient is in $L^Q(G)$. Here in defining the $L^Q(G)$ space, we use the Lebesgue measure on $\mathfrak{g}$, which we identify with $G$ via the exponential map. In the following, we will denote the functional spaces on $G$ by $\dot{NL}^{1,Q}$, $L^Q$, $L^{\infty}$ etc. for simplicity unless otherwise specified.

It is well-known that $\dot{NL}^{1,Q}$ fails to embed into $L^{\infty}$. Nonetheless, we prove the following approximation lemma for functions in $\dot{NL}^{1,Q}$:

\begin{lemma} \label{lem:approxsub}
Given any $\delta > 0$ and any function $f$ on $G$ with $\|\nabla_b f\|_{L^Q} < \infty$, there exist a function $F \in L^{\infty}$ with $\nabla_b F \in L^Q$, and a constant $C_{\delta} > 0$ with $C_{\delta}$ independent of $f$, such that $$\sum_{k=2}^{n_1} \|X_k f - X_k F\|_{L^Q} \leq \delta \|\nabla_b f\|_{L^Q}$$ and $$\|F\|_{L^{\infty}} + \|\nabla_b F\|_{L^Q} \leq C_{\delta} \|\nabla_b f\|_{L^Q}.$$
\end{lemma}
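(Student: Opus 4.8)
The plan is to adapt Bourgain–Brezis's proof of Lemma~\ref{lem:approx} to the homogeneous group setting by working on a Littlewood–Paley-type decomposition adapted to the dilations of $G$. First I would decompose $f$ into dyadic pieces $f = \sum_{j \in \mathbb{Z}} f_j$, where $f_j = \psi_j * f$ is a smooth frequency-localized piece at scale $2^{-j}$ with respect to the homogeneous dilation structure, using a left-invariant Littlewood–Paley partition of unity (available on any homogeneous group via the spectral decomposition of a sub-Laplacian, or via a suitable homogeneous mollifier). The key quantitative inputs are the Bernstein-type inequalities: $\|X_k f_j\|_{L^\infty} \lesssim 2^{jQ/Q}\,2^j\|\nabla_b f_j\|_{L^Q} = 2^{j(1+1)}\cdots$ — more precisely, since the homogeneous dimension is $Q$ and $X_k$ has degree $1$, one has $\|f_j\|_{L^\infty} \lesssim 2^{j}\|f_j\|_{L^Q}$-type gains that exactly fail to sum, which is the source of the failure of the embedding $\dot{NL}^{1,Q} \hookrightarrow L^\infty$.

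The construction of $F$ then proceeds as in Bourgain–Brezis. The idea is that $F$ should agree with $f$ up to a "correction" that lives essentially in the $X_1$-direction only: for each frequency scale $j$, replace $f_j$ by $f_j$ composed with (or corrected by) a function that truncates the values of $f$ where they get too large, accepting an error only in the $X_1$-derivative. Concretely, one writes $F = \sum_j G_j$ where $G_j$ is obtained from $f_j$ by a nonlinear truncation tied to a slowly-varying cutoff in the single bad direction; the telescoping/summation is arranged so that (a) $\|F\|_{L^\infty}$ is controlled because at each point only boundedly many (or logarithmically many, absorbed into $C_\delta$) scales contribute with their $L^\infty$ norms after truncation, and (b) $\|X_k f - X_k F\|_{L^Q}$ for $k \geq 2$ is small because the truncation only disturbs the $X_1$-derivative — here one crucially uses that the vector fields $X_2,\dots,X_{n_1}$ commute in the appropriate leading order with the truncation built along $X_1$, modulo lower-order terms from the bracket relations $[V_1,V_1]\subseteq V_2$, which must be shown to be harmless.

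The main obstacle I anticipate is exactly this last point: on $\mathbb{R}^n$ the directions $\partial_2,\dots,\partial_n$ genuinely commute with a function of $x_1$, but on $G$ the fields $X_1,\dots,X_{n_1}$ do not commute — their brackets generate $V_2,\dots,V_m$. So a cutoff built as a function along the $X_1$-flow will not be annihilated by $X_k$ for $k\geq 2$; one gets error terms involving higher-degree vector fields acting on $f$, which a priori are not controlled by $\|\nabla_b f\|_{L^Q}$ alone. Overcoming this requires either (i) choosing the truncation to depend on an exponential-coordinate variable adapted so that the commutator errors carry extra smallness in the scale parameter $2^{-j}$ (since $X_k$ of a degree-$2$ coordinate is itself only degree $1$, there should be a gain), and then summing these errors against the slack $\delta$; or (ii) localizing the construction on a Whitney-type decomposition where $f$ is nearly constant. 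I would push on route (i): estimate the commutator $[X_k, \Phi]$ where $\Phi$ is multiplication by the truncating cutoff, show it maps $L^Q \to L^Q$ with a norm that is summable in $j$ after the truncation is in place, and thereby close the estimate with $C_\delta$ depending on $\delta$ through the number of scales one must treat "by hand" before the Bernstein gain takes over. The bookkeeping of these scale-dependent constants, and verifying that the non-commutativity truly only costs lower-order (hence summable) terms, is where the real work lies.
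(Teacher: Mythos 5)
Your plan correctly identifies the overall shape of the argument (a Littlewood--Paley decomposition adapted to the dilations, a Bernstein inequality that just barely fails to sum, a nonlinear truncation-like construction that sacrifices control of one derivative direction, and the central obstacle of non-commutativity of $X_1,\dots,X_{n_1}$). But the mechanism you propose for obtaining the $X_1$-versus-others asymmetry is not the right one, and your route (i) for handling non-commutativity would not close.

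The asymmetry in the paper does not come from ``a cutoff built as a function along the $X_1$-flow'' whose commutators with $X_k$ ($k\geq 2$) must then be shown harmless. Instead, the asymmetry is designed into an \emph{anisotropic controlling kernel}: the paper introduces $x^{\sigma}=[2^{\sigma}x_1,x_2,\dots,x_n]$, $x_{\sigma}=2^{-\sigma}\cdot x^{\sigma}$, and $E(x)=e^{-(1+\|x_{\sigma}\|^{2m!})^{1/2m!}}$, a Schwartz function stretched by $2^{\sigma}$ in the $x_1$-coordinate. The controlling functions $\omega_j$ (a discrete $\ell^Q$-convolution against $E$, equation (\ref{eq:omegajdef})) and $\tomega_j$ (an honest convolution against $E$, equation (\ref{eq:tomegajdef})) then satisfy $|X_1\omega_j|\lesssim 2^j\omega_j$ but $|X_k\omega_j|\lesssim 2^{j-\sigma}\omega_j$ for $k\geq 2$, simply because $X_k E$ gains a factor $2^{-\sigma}$ from the stretching. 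No commutator estimate $[X_k,\Phi]$ is ever invoked; the gain of $2^{-\sigma}$ is built into the kernel. Moreover, the paper is careful (see the remark in the proof of Proposition~\ref{prop:omegaderiv}) to keep the spatial variable $x$ only in the argument of $E$ and not in $S_{j+N}|\Delta_j f|$, precisely because on a non-abelian group one cannot move derivatives across a discrete convolution by integrating by parts. Non-commutativity is addressed not by commutator estimates but by the left/right-invariant derivative bookkeeping of Section~\ref{sect:leftright}: one systematically rewrites $\nabla_b(Kf)=f*(\nabla_b^R\tilde K)=(\nabla_b f)*\tilde K$ using Proposition~\ref{prop:leftright}, so that left-invariant derivatives are converted to right-invariant ones before they are moved onto the function.

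Your plan also omits several structural ingredients that the proof cannot do without: the split $f-f_0=g+h$ into high-frequency-dominated and low-frequency-dominated parts via the cutoff $\zeta_j$; the use of \emph{two} different controlling functions $\omega_j$ and $\tomega_j$ (one satisfying the maximal $L^Q$ bound of Proposition~\ref{prop:maxomega}, the other frequency-localized and dominating $\omega_j$); the algebraic ``telescoping product'' paradigm of Section~\ref{sect:alg} (which is the actual form of the nonlinear truncation, not a pointwise cutoff); the heat-kernel approximate identity $S_{j+N}$ replacing the Euclidean Fej\'er kernel; and the preliminary reduction via a smallness hypothesis (\ref{eq:smallassump}) on $\|\nabla_b f\|_{L^Q}$ that is then removed by rescaling. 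Without the anisotropic kernel and the controlling functions, I do not see how your route (i) would produce the crucial $2^{-\sigma}$ gain, and the bracket relations $[V_1,V_1]\subseteq V_2$ by themselves do not give it (indeed, they work against you, since a higher-step coordinate derivative is \emph{not} controlled by $\nabla_b f$). So while your diagnosis of the obstacle is accurate, the proposed remedy is not the one that makes the proof go through.
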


Specializing this result to the Heisenberg group $\mathbb{H}^n$, we deduce, for instance, the following result about the solution of $\dbarb$:

\begin{thm} \label{thm:soldbarbstrong}
Suppose $Q = 2n+2$ and $q \ne n-1$. Then for any $(0,q)$-form $f$ on $\mathbb{H}^n$ that has coefficients in $L^Q$ and that is the $\dbarb^*$ of some other form with coefficients in $\dot{NL}^{1,Q}$, there exists a $(0,q+1)$-form $Y$ on $\mathbb{H}^n$ with coefficients in $L^{\infty} \cap \dot{NL}^{1,Q}$ such that $$\dbarb^*Y = f$$ in the sense of distributions, with $\|Y\|_{L^{\infty}} + \|\nabla_b Y\|_{L^Q} \leq C \|f\|_{L^Q}$.
\end{thm}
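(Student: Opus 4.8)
The plan is to deduce Theorem~\ref{thm:soldbarbstrong} from the approximation Lemma~\ref{lem:approxsub} in exactly the way Bourgain--Brezis deduced their strengthening of Theorem~\ref{thm:BB} from Lemma~\ref{lem:approx}, but with $d^*$ replaced by $\dbarb^*$ on $\mathbb{H}^n$. First I would recall that a non-strengthened version of this statement (namely, solving $\dbarb^* Y = f$ with $Y$ merely in $L^\infty$ and $\|Y\|_{L^\infty} \le C\|f\|_{L^Q}$) should already be available: it is the Heisenberg-group analog of Theorem~\ref{thm:BB}, obtainable from the Chanillo--van Schaftingen extension of Theorem~\ref{thm:vs} to homogeneous groups together with the appropriate duality and the $\dbarb$-Hodge theory on $\mathbb{H}^n$ (this is where the hypothesis $q \ne n-1$ enters, as in Theorem~\ref{thm:BB}, and where $f$ being in the image of $\dbarb^*$ is used). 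Granting this ``weak'' solution operator, the issue is purely to upgrade the target space from $L^\infty$ to $L^\infty \cap \dot{NL}^{1,Q}$.

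The key step is an iteration/telescoping argument driven by Lemma~\ref{lem:approxsub}. Fix $\delta$ small, to be chosen. Given $f$ as in the statement, first solve $\dbarb^* Y_0 = f$ with $Y_0 \in L^\infty$, $\|Y_0\|_{L^\infty} \le C\|f\|_{L^Q}$. Now $Y_0$ need not lie in $\dot{NL}^{1,Q}$, but each coefficient of $Y_0$ can be fed to Lemma~\ref{lem:approxsub} (applied, strictly speaking, to a regularization of $Y_0$, or one uses that $f$ being in the image of $\dbarb^*$ forces enough regularity on $Y_0$) to produce $F_0 \in L^\infty \cap \dot{NL}^{1,Q}$ whose subelliptic derivatives $X_k F_0$ match $X_k Y_0$ for $k \ge 2$ up to an $L^Q$ error bounded by $\delta \|\nabla_b Y_0\|_{L^Q}$, while $\|F_0\|_{L^\infty} + \|\nabla_b F_0\|_{L^Q} \le C_\delta \|\nabla_b Y_0\|_{L^Q}$. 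The point is that the error $\dbarb^*(Y_0 - F_0)$, because $\dbarb^*$ on $\mathbb{H}^n$ is built from the $X_k$ in such a way that the ``missing'' direction $X_1$ can be paired off against the missing direction in $\dbarb^*$'s structure (the complex structure couples $X_1$ with $X_{n+1}$, and one arranges the basis so that the uncontrolled direction drops out), has $L^Q$ norm controlled by $\delta$ times $\|\nabla_b Y_0\|_{L^Q}$, hence by $C\delta$ times $\|f\|_{L^Q}$. Setting $f_1 := f - \dbarb^* F_0$ and solving again with the weak operator, then repeating, one gets a geometrically convergent series $Y := F_0 + F_1 + F_2 + \cdots$ provided $C\delta < 1$; the sum lies in $L^\infty \cap \dot{NL}^{1,Q}$ with the claimed estimate, and $\dbarb^* Y = f$.

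The main obstacle is the bookkeeping that shows $\dbarb^*(Y_j - F_j)$ is small in $L^Q$ using only that the derivatives in directions $X_2, \dots, X_{n_1}$ are approximated. One must exploit the precise form of $\dbarb^*$ on $\mathbb{H}^n$: writing it in terms of the left-invariant fields $X_1, \dots, X_n, X_{n+1}, \dots, X_{2n}$ (with $n_1 = 2n$), the operator $\dbarb^*$ on $(0,q)$-forms is a combination of the complexified fields $Z_j = X_j - iX_{n+j}$ (or their conjugates), so that when Lemma~\ref{lem:approxsub} is applied with the distinguished ``bad'' direction chosen to be, say, $X_1$, the contributions of $X_1$ to $\dbarb^* F_j$ get cancelled against the corresponding contribution already present in $\dbarb^* Y_j = f_j$, which is of size $O(\|f_j\|_{L^Q})$ rather than $O(\|\nabla_b Y_j\|_{L^Q})$ — one has to check this cancellation respects the form structure and that only one real direction is ever left uncontrolled, so that Lemma~\ref{lem:approxsub} (which controls $n_1 - 1$ of the $n_1$ directions) suffices. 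A secondary technical point is the regularity needed to apply Lemma~\ref{lem:approxsub} to the $Y_j$'s, handled by smoothing and a limiting argument, using the a priori bounds to pass to the limit. Once these are in place, the contraction-mapping/telescoping step is routine.
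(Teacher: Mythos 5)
Your iteration/telescoping strategy driven by Lemma~\ref{lem:approxsub} is essentially right, and it is the same skeleton the paper uses. But there is a genuine gap at the very first step. You propose to begin by invoking a ``weak'' $L^\infty$ solvability result (the $\mathbb{H}^n$ analog of Theorem~\ref{thm:BB}) to produce $Y_0$ with $\|Y_0\|_{L^\infty}\le C\|f\|_{L^Q}$. But such a $Y_0$ comes with an $L^\infty$ bound only; nothing controls $\|\nabla_b Y_0\|_{L^Q}$, so Lemma~\ref{lem:approxsub} cannot be applied to $Y_0$, and the error you want to use for contraction, $\delta\|\nabla_b Y_0\|_{L^Q}$, is not dominated by $\|f\|_{L^Q}$. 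Your suggested patch (smooth $Y_0$, or argue that ``$f$ being in the image of $\dbarb^*$ forces enough regularity on $Y_0$'') does not close this: smoothing a merely-$L^\infty$ function produces mollifications whose $\dot{NL}^{1,Q}$ norms are unbounded, and the iteration would lose its geometric rate. The paper bypasses the weak $L^\infty$ theorem entirely. It starts from the bounded operator $\dbarb^*\colon \dot{NL}^{1,Q}(\Lambda^{0,q+1})\to L^Q(\Lambda^{0,q})$, notes it has closed range, and invokes the bounded inverse theorem (open mapping theorem) to get, for each $f$ in the image, an $\alpha^{(0)}\in\dot{NL}^{1,Q}$ with $\dbarb^*\alpha^{(0)}=f$ and $\|\nabla_b\alpha^{(0)}\|_{L^Q}\le C\|f\|_{L^Q}$. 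It is to this $\alpha^{(0)}$, not to an $L^\infty$ solution, that the approximation lemma is then applied component-wise, and the contraction closes cleanly.

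A secondary inaccuracy is your description of how the ``missing'' direction drops out of $\dbarb^*$. You frame it as a cancellation between the $X_1$-contribution in $\dbarb^* F_j$ and a corresponding piece of $\dbarb^* Y_j$, but no cancellation is involved. The actual mechanism, and the way the paper uses it, is componentwise: for each strictly increasing multi-index $I$ of length $q+1<n$, $\dbarb^*$ of the $d\bar z^I$-component involves only the fields $Z_k=\tfrac12(X_k-iX_{k+n})$ for $k\in I$; since $|I|<n$, there is an $i\in\{1,\dots,n\}\setminus I$, and neither $X_i$ nor $X_{i+n}$ is ever differentiated. You therefore apply Lemma~\ref{lem:approxsub} to the coefficient $\alpha^{(0)}_I$ with $X_i$ chosen as the single uncontrolled direction; the $2n-1$ controlled directions then cover all the derivatives that actually appear in $\dbarb^*$. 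This is also precisely where the hypothesis $q\ne n-1$ (hence $q+1<n$) enters, not through Chanillo--van Schaftingen or Hodge theory as you suggest. Once you replace your first step by the bounded-inverse-theorem step and state the direction-selection argument this way, the rest of your iteration is the paper's proof.
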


We then have a Gagliardo-Nirenberg inequality for $\dbarb$ on $\mathbb{H}^n$:

\begin{thm} \label{thm:subGNstrong}
Suppose $Q = 2n+2$. If $u$ is a $(0,q)$ form on $\mathbb{H}^n$ with $2 \leq q \leq n-2$, then
\begin{equation} \label{eq:GNdbarbq}
\|u\|_{L^{Q/(Q-1)}} \leq C (\|\dbarb u\|_{L^1 + (\dot{NL}^{1,Q})^*} + \|\dbarb^* u \|_{L^1 + (\dot{NL}^{1,Q})^*}).
\end{equation}
Also, if $n \geq 2$ and $u$ is a function on $\mathbb{H}^n$ that is orthogonal to the kernel of $\dbarb$, then
\begin{equation} \label{eq:GNdbarb0}
\|u\|_{L^{Q/(Q-1)}} \leq C \|\dbarb u\|_{L^1 + (\dot{NL}^{1,Q})^*}.
\end{equation}
\end{thm}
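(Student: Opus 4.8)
The plan is to deduce Theorem~\ref{thm:subGNstrong} from Theorem~\ref{thm:soldbarbstrong} by a duality argument, exactly parallel to the way Bourgain--Brezis deduced the strengthened versions of Theorems~\ref{thm:LS} and~\ref{thm:vs} from the strengthened version of Theorem~\ref{thm:BB}. The starting point is the identity
\begin{equation*}
\|u\|_{L^{Q/(Q-1)}} = \sup \left\{ \left| \int_{\mathbb{H}^n} \langle u, \phi \rangle \right| : \phi \text{ a smooth compactly supported } (0,q)\text{-form}, \ \|\phi\|_{L^Q} \leq 1 \right\},
\end{equation*}
so that it suffices to estimate $\int \langle u, \phi \rangle$. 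Using the Hodge/Kohn decomposition adapted to $\dbarb$ on $\mathbb{H}^n$ (valid in the relevant bidegree range $2 \le q \le n-2$, where neither $\dbarb^* \phi$ is a function nor $\dbarb \phi$ is of top type), one writes $\phi = \dbarb \alpha + \dbarb^* \beta$ (modulo the relevant cohomology, which is trivial on $\mathbb{H}^n$ in these degrees), with $\alpha$ a $(0,q-1)$-form and $\beta$ a $(0,q+1)$-form whose coefficients lie in $\dot{NL}^{1,Q}$ with $\|\nabla_b \alpha\|_{L^Q} + \|\nabla_b \beta\|_{L^Q} \lesssim \|\phi\|_{L^Q}$; here one needs subelliptic regularity for $\boxb$, i.e.\ the fact that $\boxb^{-1}$ gains two subelliptic derivatives, away from the excluded bidegrees. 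Then
\begin{equation*}
\int \langle u, \phi \rangle = \int \langle u, \dbarb \alpha \rangle + \int \langle u, \dbarb^* \beta \rangle = \int \langle \dbarb^* u, \alpha \rangle + \int \langle \dbarb u, \beta \rangle.
\end{equation*}

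The key point is to bound each of these two pairings by the right-hand side of~\eqref{eq:GNdbarbq}. Consider $\int \langle \dbarb u, \beta \rangle$. Since $\beta$ has coefficients in $\dot{NL}^{1,Q}$ and $\beta = \boxb^{-1} \dbarb \phi$ is itself $\dbarb^*$ of something (namely $\dbarb^* \boxb^{-1}\phi$... more precisely $\beta$ is in the image of $\dbarb^*$ acting on $\dot{NL}^{1,Q}$-forms once one checks the bidegree constraint $q+1 \ne n-1$), Theorem~\ref{thm:soldbarbstrong} applies: there is a form $Y$ with coefficients in $L^\infty \cap \dot{NL}^{1,Q}$, $\dbarb^* Y = \dbarb^* \beta$... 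Actually the cleaner route, following Bourgain--Brezis, is to argue directly that the functional $\beta \mapsto \int \langle \dbarb u, \beta \rangle$ extends to a bounded functional on $\dot{NL}^{1,Q}$-forms of norm $\lesssim \|\dbarb u\|_{L^1 + (\dot{NL}^{1,Q})^*}$, which is essentially the definition of that norm together with the fact that $\int \langle \dbarb u, \beta \rangle = \int \langle u, \dbarb^* \beta\rangle$ pairs the $L^1$-part against $\dbarb^* \beta \in L^\infty$; but the $L^\infty$ bound on a primitive of $\dbarb^* \beta$ is precisely what Theorem~\ref{thm:soldbarbstrong} furnishes. In either formulation, combining with $\|\nabla_b \beta\|_{L^Q} \lesssim \|\phi\|_{L^Q} \le 1$ gives $|\int \langle \dbarb u, \beta \rangle| \lesssim \|\dbarb u\|_{L^1 + (\dot{NL}^{1,Q})^*}$, and symmetrically for the $\alpha$-term with $\dbarb^* u$. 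For the scalar case~\eqref{eq:GNdbarb0}, the decomposition degenerates: if $u$ is orthogonal to $\ker \dbarb$ then $u = \boxb^{-1} \dbarb^* \dbarb u$, so one tests against $\phi$, writes $\phi = \dbarb^* \beta + (\text{harmonic part})$, and the orthogonality of $u$ kills the harmonic part, leaving only the $\dbarb u$ pairing; here the condition $q \ne n-1$ needed in Theorem~\ref{thm:soldbarbstrong} reads $1 \ne n-1$, i.e.\ $n \ge 2$, matching the hypothesis, with the case $n=1$ handled separately since $(0,1)$-forms are then top and one uses a genuine div-type argument instead.

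The main obstacle I expect is not the duality bookkeeping but the analytic input on $\mathbb{H}^n$: one must establish the Hodge-type decomposition $\phi = \dbarb \alpha + \dbarb^* \beta$ with the quantitative estimate $\|\nabla_b \alpha\|_{L^Q} + \|\nabla_b\beta\|_{L^Q} \lesssim \|\phi\|_{L^Q}$, and verify that the pieces $\alpha, \beta$ genuinely lie in the image of $\dbarb, \dbarb^*$ respectively so that Theorem~\ref{thm:soldbarbstrong} is applicable. This requires the $L^p$ theory of the Kohn Laplacian $\boxb$ on the Heisenberg group — its hypoellipticity away from the middle degree, the fact that $\boxb^{-1} \dbarb$ and $\boxb^{-1}\dbarb^*$ are order $-1$ in the subelliptic scale and hence map $L^Q$ boundedly into $\dot{NL}^{1,Q}$, and the (non)triviality of the relevant $L^2$ (or $L^Q$) cohomology in each bidegree, which is exactly where the exclusions $q = n-1$ (for the form case) and the distinction $q=0$ vs.\ general $q$ enter. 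Once this structural input is in place, feeding it into Theorem~\ref{thm:soldbarbstrong} and running the duality above yields~\eqref{eq:GNdbarbq} and~\eqref{eq:GNdbarb0}; care must be taken that all integrations by parts are justified for $u$ merely in $L^{Q/(Q-1)}$ with distributional $\dbarb u, \dbarb^* u$ in $L^1 + (\dot{NL}^{1,Q})^*$, which one handles by a density/approximation argument reducing to smooth compactly supported $u$.
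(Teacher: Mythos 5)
Your proposal follows the same route as the paper: pair $u$ against a test form $\phi$, invoke the Hodge decomposition $\phi = \dbarb^*\alpha + \dbarb\beta$ with $\dot{NL}^{1,Q}$ bounds, use Theorem~\ref{thm:soldbarbstrong} (and, for the $\dbarb$ piece, its dual statement, which the paper records as Corollary~\ref{cor:solvedbarb}) to replace $\alpha,\beta$ by primitives $\tilde\alpha,\tilde\beta \in L^\infty\cap\dot{NL}^{1,Q}$, and then bound the pairings $(\dbarb u,\tilde\alpha)$, $(\dbarb^*u,\tilde\beta)$ by the dual norms. One caution: the ``cleaner route'' aside — that $\beta\mapsto\int\langle\dbarb u,\beta\rangle$ is already bounded on $\dot{NL}^{1,Q}$ with norm $\lesssim\|\dbarb u\|_{L^1+(\dot{NL}^{1,Q})^*}$ — is false as stated, since the $L^1$ summand of $\dbarb u$ requires an $L^\infty$ partner; the replacement by an $L^\infty\cap\dot{NL}^{1,Q}$ primitive via Theorem~\ref{thm:soldbarbstrong} is therefore not optional but the crux of the argument, as you correctly recognize in the very next clause (and in the scalar case the relevant degree for Theorem~\ref{thm:soldbarbstrong} is $q=0$, giving $0\ne n-1$, rather than $q=1$ as you wrote, though the conclusion $n\ge 2$ is the same).
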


There is also a version of this result for (0,1) forms and $(0,n-1)$ forms, analogous to the last part of Theorem~\ref{thm:LS}.

A weaker version of this theorem, namely what one has by replacing $L^1+(\dot{NL}^{1,Q})^*$ above by $L^1$, can also be deduced easily from the work of Chanillo-van Schaftingen \cite{MR2511628} (c.f. also \cite{MR2592736}).

Several difficulties need to be overcome when we prove Lemma~\ref{lem:approxsub} on a general homogeneous group. The first is that we no longer have a Fej\'{e}r kernel as in the Euclidean spaces, which served as the building block of a good reproducing kernel $K_j$ in the original proof of Bourgain-Brezis. As a result, we need to find an appropriate variant of that. What we do is to adopt the heat kernels $S_j$, and to use $S_{j+N}$, where $N$ is large, as our approximate reproducing kernel. In other words, we use $S_{j+N} \Delta_j f$, where $N$ is large, to approximate $\Delta_j f$, where $\Delta_j f$ is a Littlewood-Paley piece of the function $f$. Since the heat kernel does not localize perfectly in ``frequency'', we need, in the preparational stage, some extra efforts to deal with additional errors that come up in that connection.

Our second difficulty, which is also the biggest challenge, is that our homogeneous group is in general not abelian. Hence we must carefully distinguish between left- and right-invariant derivatives when we differentiate a convolution (which is defined in (\ref{eq:convdef})): $X_k (f*K)$ is equal to $f*(X_k K)$, and not to $(X_k f)*K$, if $X_k$ is left-invariant and $K$ is any kernel (c.f Proposition~\ref{prop:deriv} in Section~\ref{sect:leftright}). To get around that, several ingredients are involved. One of them is to explore the relationship between left- and right-invariant vector fields, which we recall in Section \ref{sect:leftright}. Another is to introduce two different auxiliary controlling functions $\omega_j$ and $\tomega_j$. These are functions that dominate $|\Delta_j f|$ pointwisely (at least morally), and both $X_k \omega_j$ and $X_k \tomega_j$, for $k=2,\dots,n_1$, will be better controlled than $X_1 \omega_j$ and $X_1 \tomega_j$. The key here, on the other hand, is three-fold: first, $\tomega_j$ is frequency localized; second, $\tomega_j$ dominates $\omega_j$; finally, one has a good bound on $$\|\sup_j (2^j \omega_j)\|_{L^Q},$$ as we will see in Proposition~\ref{prop:maxomega}. On the contrary, we wish to point out that $\tomega_j$ will not satisfy the analog of Proposition~\ref{prop:maxomega}, and $\omega_j$ will not be frequency localized. This is basically why we needed to construct both auxiliary functions $\omega_j$ and $\tomega_j$. In defining such $\omega_j$ and $\tomega_j$, instead of taking an ``$L^{\infty}$ convolution'' as in the definition of $\omega_j$ used by Bourgain-Brezis, we will take a discrete convolution in $l^Q$, and an honest convolution, for $\omega_j$ and $\tomega_j$ respectively. (The precise definition of $\omega_j$ and $\tomega_j$ can be found in Section \ref{sect:outline}.) We then use $\omega_j$ to control the part of $f$ where the high frequencies are dominating, and use $\tomega_j$ to control the other part of $f$ where the low frequencies are dominating.

Finally, we will need two slightly different versions of Littlewood-Paley theories on a homogeneous group. One is chosen such that $f = \sum_j \Delta_j f$, and the other is chosen such that the reverse Littlewood-Paley inequality holds (as in Proposition~\ref{prop:equivLP}).

We will now proceed as follows. In Section~\ref{sect:pre}-\ref{sect:LP} we describe some preliminaries about homogeneous groups. This includes some mean-value type inequalities on $G$, some tools that allow us to mediate between left- and right-invariant derivatives, as well as a refinement of a Littlewood-Paley theory on $G$. In Section~\ref{sect:alg} we give some algebraic preliminaries needed in the proof of Lemma~\ref{lem:approxsub}, and in Section~\ref{sect:outline} we give an outline of the proof of Lemma~\ref{lem:approxsub}. Section \ref{sect:f0}-\ref{sect:g0} contains the details of the proof of Lemma \ref{lem:approxsub}. Finally in Section \ref{sect:thms} we prove Theorem \ref{thm:soldbarbstrong} and \ref{thm:subGNstrong}.

\section{Preliminaries} \label{sect:pre}

Let $G$ be a homogeneous group, $n_j := \dim V_j$, and $X_1, \dots, X_{n_1}$ be a basis of $V_1$ as above. We introduce a coordinate system on $G$. First write $$n := n_1 + \dots + n_m,$$ and extend $X_1, \dots, X_{n_1}$ to a basis $X_1, \dots, X_n$ of $\mathfrak{g}$, such that $X_{n_{j-1} + 1}, \dots, X_{n_j}$ is a basis of $V_j$ for all $1 \leq j \leq m$ (with $n_0$ understood to be 0). Then for $x = [x_1, \dots, x_n] \in \mathbb{R}^n$, we identify $x$ with $\sum_{i=1}^n x_i X_i \in \mathfrak{g}$. We will also identify $\mathfrak{g}$ with $G$ via the exponential map. Thus we write $x$ for the point $\exp(\sum_{i=1}^n x_i X_i) \in G$. This defines a coordinate system on $G$. The group identity of $G$ is $0=[0,\dots,0]$, and the dilation on $G$ is given explicitly by $$\lambda \cdot x = [\lambda x_1, \dots, \lambda x_{n_1}, \lambda^2 x_{n_1+1}, \dots, \lambda^2 x_{n_2}, \dots, \lambda^m x_{n_{m-1}+1}, \dots, \lambda^m x_n]$$ for $\lambda > 0$ and $x = [x_1, \dots, x_n]$.

For $x, y \in G$, we write $x \cdot y$ for their group product in $G$. By the Campbell-Hausdorff formula, this group law is given by a  polynomial map when viewed as a map from $\mathbb{R}^n \times \mathbb{R}^n \to \mathbb{R}^n$. More precisely, the map $(x,y) \mapsto x \cdot y$ can be computed by
\begin{align}
x \cdot y &= \exp\left(\sum_{i=1}^n x_i X_i \right) \cdot \exp\left(\sum_{i=1}^n y_i X_i \right) \notag \\
&= \exp\left( \sum_{i=1}^n x_i X_i + \sum_{i=1}^n y_i X_i + \frac{1}{2} \left[\sum_{i=1}^n x_i X_i, \sum_{i=1}^n y_i X_i\right] + \dots \right). \label{eq:CH}
\end{align}
It follows that for $1 \leq k \leq n_1$, the $k$-th coordinate of $x \cdot y$ is $x_k + y_k$.

The dilations on $G$ are automorphisms of the group: in particular,
\begin{equation} \label{eq:dilauto}
\lambda \cdot (x \cdot y) = (\lambda \cdot x) \cdot (\lambda \cdot y)
\end{equation}
for all $\lambda > 0$ and all $x, y \in G$.

A function $f(x)$ on $G$ is said to be homogeneous of degree $l$ if $f(\lambda \cdot x) = \lambda^l f(x)$ for all $x \in G$ and $\lambda > 0$. From (\ref{eq:CH}) we see that for all $n_j < k \leq n_{j+1}$, the $k$-th coordinate of $x \cdot y$ is equal to $x_k + y_k + P_k(x,y)$  where $P_k$ is a homogeneous polynomial of degree $j$ on $G \times G$. On $G$ one can define the homogeneous norm $$\|x\| = \left(\sum_{j=1}^m  \sum_{n_{j-1} < k  \leq n_j} |x_k|^{\frac{2m!}{j}} \right)^{\frac{1}{2m!}}.$$ It is a homogeneous function of degree 1 on $G$, and satisfies a quasi-triangle inequality $$\|x \cdot y\| \leq C(\|x\| + \|y\|)$$ for all $x, y \in G$, where $C$ is a constant depending only on $G$. We also have $\|x\|=\|x^{-1}\|$ for all $x \in G$, since if $x = [x_1, \dots, x_n]$ then $x^{-1} = [-x_1, \dots, -x_n]$.

Any element $X$ of $\mathfrak{g}$ can be identified with a left-invariant vector field on $G$. It will be said to be homogeneous of degree $l$ if $X(f(\lambda \cdot x)) = \lambda^l (Xf)(\lambda \cdot x)$ for all $C^1$ functions~$f$. $X_1, \dots, X_{n_1}$ is then a basis of left-invariant vector fields of degree 1 on $G$. We remind the reader that we write $\nabla_b f = (X_1 f, \dots, X_{n_1} f)$, and call this the subelliptic gradient of $f$.

By the form of the group law on $G$, one can see that if $n_{j-1} < k \leq n_j$, then $X_k$ can be written as 
\begin{equation} \label{eq:repleftinvder}
X_k = \frac{\partial}{\partial x_k} + \sum_{p = j+1}^m \sum_{n_{p-1} < k' \leq n_p} P_{k,k'}(x) \frac{\partial}{\partial x_{k'}}
\end{equation}
where $P_{k,k'}(x)$ is a homogeneous polynomial of degree $p-j$ if $n_{p-1} < k' \leq n_p$.

If $X$ is a left-invariant vector field on $G$, we write $X^R$ for the right-invariant vector field on $G$ that agrees with $X$ at the identity (namely $0$). We also write $\nabla_b^R f$ for the $n_1$-tuple $(X_1^R f, \dots, X_{n_1}^R f)$.

The Lebesgue measure $dx$ on $\mathbb{R}^n$ is a Haar measure on $G$ if we identify $x \in \mathbb{R}^n$ with a point in $G$ as we have always done. It satisfies $d(\lambda \cdot x) = \lambda^Q dx$ for all positive $\lambda$, where $Q = \sum_{j=1}^m j \cdot n_j$ is the homogeneous dimension we introduced previously.

With the Haar measure we define the $L^p$ spaces on $G$. If $f$ and $g$ are two $L^1$ functions on $G$, then their convolution is given by
\begin{equation} \label{eq:convdef}
f*g(x) = \int_G f(x \cdot y^{-1}) g(y) dy,
\end{equation}
or equivalently
$$ f*g(x) = \int_G f(y) g(y^{-1} \cdot x) dy. $$
The non-isotropic Sobolev space $\dot{NL}^{1,Q}$ is the space of functions $f$ such that $\|\nabla_b f\|_{L^Q} < \infty$.

\section{Some basic inequalities}

To proceed further, we collect some basic inequalities that will be useful on a number of occasions.

In this and the next section, $f$ and $g$ will denote two general $C^1$ functions on $G$. The first proposition is a mean-value inequality.

\begin{prop} \label{prop:meanvalue}
There exist absolute constants $C > 0$ and $a > 0$ such that $$|f(x \cdot y^{-1}) - f(x)| \leq C \|y\| \sup_{\|z\| \leq a \|y\|} |(\nabla_b f)(x \cdot z^{-1})|$$ for all $x, y \in G$.
\end{prop}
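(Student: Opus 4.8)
The plan is to reduce the estimate to a one-dimensional calculation along a curve joining $x\cdot y^{-1}$ to $x$ inside $G$, using the stratified structure to express that curve as a finite concatenation of integral curves of the basis vector fields $X_1,\dots,X_{n_1}$ of $V_1$. The starting point is that $V_1$ generates $\mathfrak g$ as a Lie algebra, so by the standard Chow-type argument (or, more concretely, by the Campbell--Hausdorff formula \eqref{eq:CH}) any $y\in G$ with $\|y\|\le 1$ can be written as an ordered product $y = \exp(t_1 X_{i_1})\cdots\exp(t_L X_{i_L})$ where $L$ is bounded by a constant depending only on $G$, each $i_\ell\in\{1,\dots,n_1\}$, and $|t_\ell|\le C\|y\|$; by homogeneity (apply the dilation $\|y\|^{-1}\cdot$ and rescale) this holds for all $y$ with the same constants. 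The point $y^{-1}$ has the same homogeneous norm as $y$, so it admits such a representation as well, with factors of size $\lesssim\|y\|$.

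Next I would telescope. Write $x\cdot y^{-1}$ and $x$ as the two endpoints of the path obtained by inserting the partial products of the representation of $y^{-1}$; this expresses $f(x\cdot y^{-1})-f(x)$ as a sum of at most $L$ differences of the form $f(w\cdot\exp(t X_i))-f(w)$ where $w$ ranges over points of the form $x\cdot(\text{partial product})$, so in particular $\|w^{-1}\cdot x\|=\|(\text{partial product})^{-1}\|\lesssim\|y\|$ by the quasi-triangle inequality. For each such term, since $X_i$ is left-invariant, $\frac{d}{ds}f(w\cdot\exp(sX_i)) = (X_i f)(w\cdot\exp(sX_i))$, so the fundamental theorem of calculus gives
\[
|f(w\cdot\exp(tX_i))-f(w)| \le |t|\sup_{0\le s\le t}|(X_if)(w\cdot\exp(sX_i))| \le C\|y\|\sup_{\|z\|\le a\|y\|}|(\nabla_b f)(x\cdot z^{-1})|,
\]
where in the last step I absorb $w = x\cdot z_0^{-1}$ with $\|z_0\|\lesssim\|y\|$ and $\exp(sX_i)$ with $\|\exp(sX_i)\|=|s|^{1/1}\cdot(\text{const})\lesssim\|y\|$ into a single factor $z^{-1}$ using the quasi-triangle inequality, choosing $a$ large enough to cover the resulting constant. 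Summing the at most $L$ terms yields the claim with $C$ and $a$ absolute (depending only on $G$).

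The main obstacle, and the step that needs genuine care rather than bookkeeping, is the first one: producing the representation $y = \prod_\ell\exp(t_\ell X_{i_\ell})$ with the quantitative control $|t_\ell|\lesssim\|y\|$ and $L$ bounded, uniformly in $y$. Qualitatively this is Chow's theorem, but the quantitative homogeneous bound is what makes the mean-value inequality scale correctly; the clean way to get it is to work at scale $\|y\|=1$ (a compact set after quotienting by dilations), obtain a bound there by a compactness/implicit-function argument applied to the smooth surjective map $(t_1,\dots,t_L)\mapsto\prod\exp(t_\ell X_{i_\ell})$ near the identity, and then rescale by the automorphic dilation, which multiplies each $t_\ell$ attached to $X_{i_\ell}\in V_1$ by exactly $\|y\|$ since vector fields in $V_1$ are homogeneous of degree $1$. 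Everything after that is the elementary telescoping-plus-FTC argument above, together with repeated use of the quasi-triangle inequality for the homogeneous norm to keep all intermediate points inside the ball $\{\|z\|\le a\|y\|\}$.
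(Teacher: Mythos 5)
The paper does not prove Proposition~\ref{prop:meanvalue} itself; it simply cites Folland--Stein \cite{MR657581}, display (1.41) on p.~33. Your argument --- write $y^{-1}$ as a bounded-length product of horizontal exponentials with times $O(\|y\|)$ via a quantitative Chow argument at unit scale plus automorphic rescaling, then telescope and apply the one-dimensional fundamental theorem of calculus along each flow, keeping all intermediate points in a ball of radius $\lesssim\|y\|$ around $x$ by the quasi-triangle inequality --- is correct, and as far as I can tell it is essentially the argument behind the Folland--Stein reference. The one place I would tighten the wording is the quantitative Chow step, which you yourself flag as the part needing care: the map $\Phi_L(t_1,\dots,t_L)=\prod_{\ell}\exp(t_\ell X_{i_\ell})$ has rank only $n_1$ at the origin, so ``implicit function theorem near the identity'' is not literally available. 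What one actually uses is that $V_1$ bracket-generates $\mathfrak g$, which (by the accessibility/orbit theorem, or concretely by exhibiting, for a suitable choice of $L$ and of the indices $i_\ell$, a point $t^0$ in the unit cube at which $d\Phi_L$ has full rank $n$) shows that the image of $\Phi_L(\{|t_\ell|\le 1\})$ contains an open neighborhood $U$ of the identity. One then covers the compact set $\{\|y\|\le 1\}$ by finitely many iterated products $U\cdot U\cdots U$ using the group law, getting a uniform $L$ and $|t_\ell|\le C$ at unit scale, and finally rescales by the dilation $\|y\|\cdot(\;)$, under which each $t_\ell$ (attached to a degree-$1$ field) scales linearly, to obtain $|t_\ell|\le C\|y\|$. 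Modulo that bookkeeping, your proof is complete and matches the standard route.
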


For a proof of this proposition, see Folland-Stein \cite[Page 33, (1.41)]{MR657581}.

There is also a mean-value inequality for right translations, whose proof is similar and we omit:

\begin{prop} \label{prop:meanvalueright}
There exist absolute constants $C > 0$ and $a > 0$ such that $$|f(y^{-1} \cdot x) - f(x)| \leq C \|y\| \sup_{\|z\| \leq a\|y\|} |(\nabla_b^R f)(z^{-1} \cdot x)|$$ for all $x, y \in G$.
\end{prop}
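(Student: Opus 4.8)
The plan is to reduce the right-translation statement to the already-known left-translation statement (Proposition~\ref{prop:meanvalue}) by passing to the ``opposite'' group. Let $\check G$ denote the group with the same underlying manifold as $G$ but with the reversed product $x *_{\check G} y := y \cdot x$. This is again a homogeneous group with the same dilations and the same homogeneous norm $\|\cdot\|$, and a standard check shows that the left-invariant vector fields on $\check G$ are precisely the right-invariant vector fields on $G$; in particular $\nabla_b^R$ on $G$ is the subelliptic gradient $\nabla_b^{\check G}$ on $\check G$, and $X_1^R,\dots,X_{n_1}^R$ is a basis of degree-one left-invariant vector fields on $\check G$. Moreover the inverse map is unchanged ($x^{-1}$ is the same in both groups, since if $x=[x_1,\dots,x_n]$ then $x^{-1}=[-x_1,\dots,-x_n]$ in either product), because $x *_{\check G} x^{-1} = x^{-1}\cdot x = 0$.

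With this setup the right-translation quantity on $G$ becomes a left-translation quantity on $\check G$: indeed $y^{-1}\cdot x = x *_{\check G} y^{-1}$, so
$$
\bigl|f(y^{-1}\cdot x) - f(x)\bigr| = \bigl| f\bigl(x *_{\check G} y^{-1}\bigr) - f(x)\bigr|.
$$
Applying Proposition~\ref{prop:meanvalue} in the group $\check G$ to the $C^1$ function $f$ (with the roles of the group product and subelliptic gradient taken in $\check G$) gives absolute constants $C,a>0$ with
$$
\bigl| f\bigl(x *_{\check G} y^{-1}\bigr) - f(x)\bigr| \leq C\|y\| \sup_{\|z\|\leq a\|y\|} \bigl| \bigl(\nabla_b^{\check G} f\bigr)\bigl(x *_{\check G} z^{-1}\bigr)\bigr| = C\|y\| \sup_{\|z\|\leq a\|y\|} \bigl| \bigl(\nabla_b^{R} f\bigr)\bigl(z^{-1}\cdot x\bigr)\bigr|,
$$
which is exactly the claimed inequality. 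The constants depend only on $\check G$, hence only on $G$, as required.

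The only point that needs a little care — and the main (minor) obstacle — is verifying that $\check G$ really is a homogeneous group in the sense of the paper, i.e.\ that the reversed product is still polynomial, that the dilations $\lambda\cdot$ remain automorphisms of the reversed product (immediate from \eqref{eq:dilauto}), and that the grading $\mathfrak g = V_1\oplus\dots\oplus V_m$ and stratification are preserved; this holds because the Lie algebra of $\check G$ is $\mathfrak g$ with the bracket negated, which has the same graded stratified structure. Alternatively, one can avoid introducing $\check G$ altogether and simply repeat the Folland–Stein argument (\cite[Page~33, (1.41)]{MR657581}) with right-invariant vector fields in place of left-invariant ones, integrating $f$ along a path built from the right-invariant flows; the estimates are identical word-for-word since the homogeneous norm and the degree-one structure are symmetric under $x\mapsto x^{-1}$. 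This is why the authors say ``the proof is similar and we omit'' it.
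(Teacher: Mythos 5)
Your proposal is correct. The paper itself gives no argument here; it simply remarks that the proof is ``similar'' to that of Proposition~\ref{prop:meanvalue} and omits it, presumably having in mind exactly your second route: repeating the Folland--Stein path-integration argument with right-invariant flows in place of left-invariant ones. Your primary route --- passing to the opposite group $\check G$ --- is a genuinely different and cleaner formalization of that remark. The key checks you perform are the right ones: the underlying manifold, homogeneous norm, dilations, and inversion map are unchanged; the Lie algebra of $\check G$ is $\mathfrak g$ with negated bracket, hence still graded and stratified with the same $V_j$'s; and left-invariant degree-one vector fields on $\check G$ are precisely the $X_k^R$, so $\nabla_b^{\check G}=\nabla_b^R$. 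Applying Proposition~\ref{prop:meanvalue} in $\check G$ and unwinding $x *_{\check G} y^{-1}=y^{-1}\cdot x$ then gives the claim verbatim, with constants depending only on $G$. What this buys you over the ``repeat the argument'' route is that you never have to reopen the Folland--Stein proof --- the statement becomes a formal consequence of its left-invariant counterpart --- at the small cost of verifying that $\check G$ is a homogeneous group in the paper's sense, which you do. One tiny quibble: your phrase ``the estimates are identical word-for-word since the homogeneous norm and the degree-one structure are symmetric under $x\mapsto x^{-1}$'' is a bit loose (inversion is an anti-automorphism, not an automorphism), but that sentence is describing the alternative route, not the one you actually carry out, and the opposite-group argument itself is airtight.
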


Next we have some integral estimates:

\begin{prop} \label{prop:fg}
If
\begin{equation} \label{eq:condfdiffintgk}
\left|\nabla_b f(x) \right| \leq (1+\|x\|)^{-M} \quad \text{and} \quad \left|g(x)\right| \leq (1+\|x\|)^{-M}
\end{equation}
for all $M > 0$, then for any non-negative integer $k$,
\begin{equation} \label{eq:fdiffintgk}
\int_G |f(x \cdot y^{-1}) - f(x)| |g_k(y)| dy \leq C 2^{-k} (1+\|x\|)^{-M}
\end{equation}
for all $M$, where $g_k(y) := 2^{kQ} g(2^k \cdot y)$.
\end{prop}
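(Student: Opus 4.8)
The plan is to combine the mean-value inequality of Proposition~\ref{prop:meanvalue} with the two decay hypotheses, after splitting the $y$-integral according to whether $\|y\|$ is small or large relative to $1+\|x\|$.

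First I would invoke Proposition~\ref{prop:meanvalue} to bound
\[
|f(x\cdot y^{-1})-f(x)| \le C\|y\|\sup_{\|z\|\le a\|y\|}|(\nabla_b f)(x\cdot z^{-1})|,
\]
which reduces everything to controlling $\nabla_b f$ in a ball around $x$. Let $C_0$ be the constant in the quasi-triangle inequality $\|x\cdot z^{-1}\|\le C_0(\|x\|+\|z\|)$, and set $\rho(x):=c(1+\|x\|)$ with $c$ so small that $\|z\|\le a\|y\|\le a\rho(x)$ forces $1+\|x\cdot z^{-1}\|\ge \tfrac{1}{2C_0}(1+\|x\|)$; this follows at once from $1+\|x\|\le C_0(1+\|x\cdot z^{-1}\|+\|z\|)$ together with $\|z^{-1}\|=\|z\|$. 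I would also record that, by the substitution $u=2^k\cdot y$ (so $du=2^{kQ}\,dy$ and $\|y\|=2^{-k}\|u\|$) and the bound $|g_k(y)|=2^{kQ}|g(2^k\cdot y)|\le 2^{kQ}(1+2^k\|y\|)^{-M'}$,
\[
\int_{\|y\|>R}\|y\|\,|g_k(y)|\,dy \le 2^{-k}\int_{\|u\|>2^kR}\|u\|(1+\|u\|)^{-M'}\,du
\]
for every $R\ge 0$, where the last integral is an absolute constant when $R=0$ (once $M'>Q+1$) and is $\le C(1+2^kR)^{-M'/2}$ when $R>0$ (once $M'>2Q+2$), using $\int_G(1+\|u\|)^{-s}\,du<\infty$ for $s>Q$.

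On the region $\|y\|\le\rho(x)$, the decay of $\nabla_b f$ gives $|(\nabla_b f)(x\cdot z^{-1})|\le(2C_0)^M(1+\|x\|)^{-M}$ throughout, so this part of the integral is at most $C(2C_0)^M(1+\|x\|)^{-M}\int_G\|y\|\,|g_k(y)|\,dy\le C_M 2^{-k}(1+\|x\|)^{-M}$ by the $R=0$ case above. On the complementary region $\|y\|>\rho(x)$ there is no decay left for $\nabla_b f$ near $x$, but the crude bound $|\nabla_b f|\le 1$ (valid since $(1+\|\cdot\|)^{-1}\le 1$) yields $|f(x\cdot y^{-1})-f(x)|\le C\|y\|$, so this part is $\le C\int_{\|y\|>\rho(x)}\|y\|\,|g_k(y)|\,dy$; the $R=\rho(x)$ case above bounds this by $C2^{-k}(1+2^k\rho(x))^{-M'/2}\le C_M 2^{-k}(1+\|x\|)^{-M}$ after choosing $M'\ge 2M$ and $M'>2Q+2$. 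Adding the two regions gives the claim.

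The only genuinely delicate point is the large-$\|y\|$ region: we have no pointwise control on $f$ itself, only on its horizontal gradient, so bounding $|f(x\cdot y^{-1})|+|f(x)|$ is not an option. The way around it is to note that on that region $\|y\|$ is already of size $1+\|x\|$ or larger, so the rapid decay of $g$ at scale $2^{-k}$ supplies both the factor $2^{-k}$ and arbitrary polynomial decay in $\|x\|$, while the trivial uniform bound on $\nabla_b f$ suffices for the difference quotient. The remainder is routine bookkeeping with the quasi-triangle inequality and the homogeneities $d(\lambda\cdot x)=\lambda^Q\,dx$ and $\|\lambda\cdot y\|=\lambda\|y\|$.
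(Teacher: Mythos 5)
Your argument is correct and is essentially the proof the paper gives: split the $y$-integral at radius comparable to $1+\|x\|$, apply the mean-value inequality of Proposition~\ref{prop:meanvalue} in both regions, use the decay of $\nabla_b f$ in the inner region and the crude bound $|\nabla_b f|\le 1$ together with the polynomial decay of $g_k$ in the outer region, and in each case extract the $2^{-k}$ factor from $\int\|y\|\,|g_k(y)|\,dy$ after the substitution $u=2^k\cdot y$. The only cosmetic difference is that you threshold at $c(1+\|x\|)$ whereas the paper uses $c\|x\|$; both handle the small-$\|x\|$ regime correctly.
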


The key here, as well as in the next two propositions, is that we get a small factor $2^{-k}$ on the right hand side of our estimates.

\begin{proof}
We split the integral into two parts: $$\int_G |f(x \cdot y^{-1}) - f(x)| |g_k(y)| dy =  \int_{\|y\| \leq c\|x\|} dy + \int_{\|y\| > c \|x\|} dy = I + II,$$ where $c$ is a constant chosen such that if $\|y\| \leq c\|x\|$ and $\|z\| \leq a\|y\|$ then $\|x \cdot z^{-1}\| \geq \frac{1}{2} \|x\|$. Here $a$ is the constant appearing in the statement of Proposition~\ref{prop:meanvalue}, and such $c$ exists by  (\ref{eq:normmeanvalue}) below. We then apply Proposition \ref{prop:meanvalue} twice. First in $I$, the integrand can be bounded by $$|f(x \cdot y^{-1}) - f(x)||g_k(y)|  \leq C (1+\|x\|)^{-M} \|y\| |g_k(y)|$$ for all $M$, and $$\int_G \|y\| |g_k(y)| dy = C 2^{-k}.$$ Also, in $II$, the integrand can be bounded by $$|f(x \cdot y^{-1}) - f(x)||g_k(y)| \leq C \|y\| |g_k(y)|,$$ and \begin{align*} \int_{\|y\| \geq c\|x\|} \|y\| |g_k(y)| dy &\leq \int_{\|y\| \geq c\|x\|} \|y\| (1+2^k\|y\|)^{-Q-M-1} 2^{kQ} dy \\ &\leq C 2^{-k} (1+\|x\|)^{-M} \end{align*} for all $M$. Combining the estimates concludes the proof.
\end{proof}

We remark here that if we want (\ref{eq:fdiffintgk}) to hold for a specific $M$, then we only need condition (\ref{eq:condfdiffintgk}) to hold with $M$ replaced by $Q+M+1$.

In particular, we have
\begin{prop} \label{prop:fgconv}
If $f$, $g$ are as in Proposition \ref{prop:fg}, and in addition $\int_G g(y) dy = 0$, then for any non-negative integer $k$, we have $$|f*g_k(x)| \leq C 2^{-k} (1+\|x\|)^{-M}$$ for all $M$.
\end{prop}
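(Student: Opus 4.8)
The plan is to deduce Proposition~\ref{prop:fgconv} directly from Proposition~\ref{prop:fg} by exploiting the extra cancellation hypothesis $\int_G g(y)\,dy = 0$, which makes the convolution $f*g_k$ look like the difference quotient integral already estimated.

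First I would write out the convolution using the second form in (\ref{eq:convdef}), namely $f*g_k(x) = \int_G f(x\cdot y^{-1}) g_k(y)\,dy$. Next, since $\int_G g(y)\,dy = 0$ and the dilation is measure-multiplying by $\lambda^Q$, a change of variables gives $\int_G g_k(y)\,dy = \int_G 2^{kQ} g(2^k\cdot y)\,dy = \int_G g(z)\,dz = 0$. Therefore $f(x) \int_G g_k(y)\,dy = 0$, and we may subtract this term for free to obtain
\begin{equation} \label{eq:fgconvsubtract}
f*g_k(x) = \int_G \bigl(f(x\cdot y^{-1}) - f(x)\bigr) g_k(y)\,dy.
\end{equation}
Taking absolute values and using $|g_k(y)| \le |g_k(y)|$ (or rather passing the absolute value inside the integral), the right-hand side of (\ref{eq:fgconvsubtract}) is bounded by $\int_G |f(x\cdot y^{-1}) - f(x)|\,|g_k(y)|\,dy$, which is exactly the quantity estimated in (\ref{eq:fdiffintgk}) of Proposition~\ref{prop:fg}. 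Applying that proposition (with $k$ any non-negative integer) yields $|f*g_k(x)| \le C 2^{-k}(1+\|x\|)^{-M}$ for all $M$, which is the claim.

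Since the real content has already been packaged into Proposition~\ref{prop:fg}, there is essentially no obstacle here; the only point to be careful about is the interchange of the absolute value with the integral and the verification that the decay hypotheses on $f$ and $g$ are inherited as needed — but these are exactly the hypotheses carried over verbatim from Proposition~\ref{prop:fg}. One minor bookkeeping remark (as the excerpt itself notes after the proof of Proposition~\ref{prop:fg}) is that if one only wants the conclusion for a fixed $M$, it suffices to assume (\ref{eq:condfdiffintgk}) with $M$ replaced by $Q+M+1$; the same remark applies here, but is not needed for the stated form. Thus the proof is a two-line argument: subtract the (vanishing) mean-value term, then invoke Proposition~\ref{prop:fg}.
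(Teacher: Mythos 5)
Your proof is correct and follows exactly the same route as the paper: subtract the vanishing term $f(x)\int_G g_k(y)\,dy$ (justified by $\int_G g = 0$ and the fact that dilation preserves integrals) and then invoke Proposition~\ref{prop:fg}. Nothing further is needed.
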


\begin{proof}
One can write $$f*g_k(x) = \int_G \left(f(x \cdot y^{-1}) - f(x)\right) g_k(y) dy$$ since $\int_G g(y) dy = 0$ implies $\int_G g_k(y) dy = 0$. Then taking absolute values and using Proposition \ref{prop:fg}, one yields the desired claim.
\end{proof}

Similarly, suppose $f_k(x):= 2^{kQ} f(2^k \cdot x)$. Using the representation $$f_k*g(x) = \int_G f_k(y) g(y^{-1} \cdot x) dy,$$ and invoking Proposition \ref{prop:meanvalueright} instead of Proposition \ref{prop:meanvalue}, we can estimate $f_k*g$ as well.

\begin{prop} \label{prop:fgconv2}
Suppose $$\left|\nabla_b^R g(x) \right| \leq (1+\|x\|)^{-M} \quad \text{and} \quad \left|f(x)\right| \leq (1+\|x\|)^{-M}$$ for all $M > 0$. Suppose further that $\int_G f(y) dy = 0$. Then for any non-negative integer $k$, we have $$|f_k*g(x)| \leq C 2^{-k} (1+\|x\|)^{-M}$$ for all $M$.
\end{prop}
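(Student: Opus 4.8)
The plan is to mirror the proof of Proposition~\ref{prop:fgconv}, but to work with the representation of the convolution that lets us move the difference onto $g$ rather than $f$, and to invoke the right-translation mean-value inequality (Proposition~\ref{prop:meanvalueright}) in place of the left one. First I would write, using $\int_G f(y)\,dy=0$ (hence $\int_G f_k(y)\,dy=0$),
\[
f_k*g(x) = \int_G f_k(y)\,g(y^{-1}\cdot x)\,dy = \int_G f_k(y)\,\bigl(g(y^{-1}\cdot x) - g(x)\bigr)\,dy,
\]
so that $|f_k*g(x)| \leq \int_G |f_k(y)|\,|g(y^{-1}\cdot x) - g(x)|\,dy$.

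Next I would split the $y$-integral into the regions $\|y\|\leq c\|x\|$ and $\|y\|>c\|x\|$, exactly as in the proof of Proposition~\ref{prop:fg}, choosing $c$ (via the quasi-triangle inequality, i.e.\ (\ref{eq:normmeanvalue})) so that $\|y\|\leq c\|x\|$ and $\|z\|\leq a\|y\|$ force $\|z^{-1}\cdot x\|\geq \tfrac12\|x\|$; here $a$ is the constant from Proposition~\ref{prop:meanvalueright}. On the inner region, Proposition~\ref{prop:meanvalueright} gives $|g(y^{-1}\cdot x)-g(x)| \leq C\|y\|\sup_{\|z\|\leq a\|y\|}|(\nabla_b^R g)(z^{-1}\cdot x)| \leq C\|y\|(1+\|x\|)^{-M}$ for all $M$, using the decay hypothesis on $\nabla_b^R g$; since $\int_G \|y\|\,|f_k(y)|\,dy = C 2^{-k}$ (change of variables $y\mapsto 2^{-k}\cdot y$, using $d(\lambda\cdot y)=\lambda^Q dy$ and homogeneity of $\|\cdot\|$), this part contributes at most $C 2^{-k}(1+\|x\|)^{-M}$. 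On the outer region I would instead bound $|g(y^{-1}\cdot x)-g(x)|\leq C\|y\|(1+\|y^{-1}\cdot x\|)^{-M'} + (1+\|x\|)^{-M}$ crudely—or more simply use $|g(y^{-1}\cdot x)-g(x)|\le C\|y\|\sup_{\|z\|\le a\|y\|}|(\nabla_b^R g)(z^{-1}\cdot x)|\le C\|y\|$ together with $|f_k(y)|\le (1+2^k\|y\|)^{-Q-M-1}2^{kQ}$—and integrate over $\|y\|\geq c\|x\|$ to gain both the factor $2^{-k}$ and the decay $(1+\|x\|)^{-M}$, just as in the estimate of term $II$ in Proposition~\ref{prop:fg}. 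Adding the two contributions yields the claim.

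The only genuinely new point compared with Proposition~\ref{prop:fgconv} is bookkeeping: because the convolution variable now sits on the \emph{left} of $x$ (as $y^{-1}\cdot x$), one must use the right-invariant gradient $\nabla_b^R g$ and the right-translation mean-value inequality, and one must re-verify that the region-splitting constant $c$ still exists with the norm estimate $\|z^{-1}\cdot x\|\ge\tfrac12\|x\|$ rather than $\|x\cdot z^{-1}\|\ge\tfrac12\|x\|$; but since $\|w\|=\|w^{-1}\|$ and the quasi-triangle inequality is two-sided, this is immediate. I do not expect any real obstacle here—the proposition is the ``right-invariant mirror'' of Proposition~\ref{prop:fgconv}, and the estimates $\int_G\|y\|\,|f_k(y)|\,dy=C2^{-k}$ and the tail bound are verbatim the ones already used.
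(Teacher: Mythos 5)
Your proposal is correct and is exactly the argument the paper has in mind: the remark immediately preceding Proposition~\ref{prop:fgconv2} already prescribes the representation $f_k*g(x)=\int_G f_k(y)\,g(y^{-1}\cdot x)\,dy$ together with the right-translation mean-value inequality of Proposition~\ref{prop:meanvalueright}, and your write-up fills in the same region-splitting and tail estimates already carried out in Propositions~\ref{prop:fg} and~\ref{prop:fgconv}. The bookkeeping points you flag (two-sidedness of the quasi-triangle inequality and $\|w\|=\|w^{-1}\|$) are indeed all that is needed.
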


Finally, let $\sigma$ be a non-negative integer, and adopt the shorthand $x_{\sigma} := 2^{-\sigma} \cdot x^{\sigma},$ where $x^{\sigma} := [2^\sigma x_1, x_2, \dots, x_n]$ if $x = [x_1,\dots,x_n]$.
We will need the following mean-value type inequality for $\|x_{\sigma}\|$.

\begin{prop} \label{prop:xtheta}
For any $x, \theta \in G$,
$$
\left|\,\, \|(x \cdot \theta)_{\sigma}\| -  \|x_{\sigma}\| \,\, \right| \leq C \|\theta\|.
$$
Here the constant $C$ is independent of $\sigma$. Similarly $\left|\,\, \|(\theta \cdot x)_{\sigma}\| -  \|x_{\sigma}\| \,\, \right| \leq C \|\theta\|$.
\end{prop}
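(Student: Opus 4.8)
The plan is to reduce the statement to a comparison of homogeneous norms under the coordinate-wise rescaling $x \mapsto x^\sigma = [2^\sigma x_1, x_2, \dots, x_n]$, combined with the quasi-triangle inequality and the mean-value inequality $(\ref{eq:normmeanvalue})$ referenced in the proof of Proposition~\ref{prop:fg}. First I would observe that the map $x \mapsto x_\sigma = 2^{-\sigma}\cdot x^\sigma$ is \emph{not} an automorphism of $G$, so one cannot simply transfer the quasi-triangle inequality for $\|\cdot\|$; one has to track how the polynomial group law interacts with the two different anisotropic scalings. The key algebraic point is that $x \mapsto x^\sigma$ is a group automorphism \emph{only} in the abelian-in-degree-$1$ sense, but since the first $n_1$ coordinates of $x\cdot y$ are exactly $x_k+y_k$ (by $(\ref{eq:CH})$), rescaling the first coordinate alone interacts with the higher coordinates only through the homogeneous polynomials $P_k(x,y)$ of degree $j$ in the coordinates $n_{j-1}<k\le n_j$.

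Concretely, I would proceed as follows. Write $\theta = [\theta_1,\dots,\theta_n]$ and expand $(x\cdot\theta)_\sigma$ coordinate by coordinate: for $1\le k\le n_1$ the $k$-th coordinate of $(x\cdot\theta)_\sigma$ is $2^{-\sigma}(2^\sigma x_k + 2^\sigma \theta_k)\cdot 2^{?}$... more carefully, the $k$-th coordinate of $x^\sigma\cdot\theta^\sigma$ equals the $k$-th coordinate of $x\cdot\theta$ after the replacement $x_1\mapsto 2^\sigma x_1$, $\theta_1\mapsto 2^\sigma\theta_1$; then apply $2^{-\sigma}\cdot$, i.e. divide the degree-$j$ coordinate by $2^{j\sigma}$. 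Since the group-law polynomials $P_k(x,\theta)$ are homogeneous of degree $j$ in the group variables (so each monomial carries a total weight $j$ distributed among the coordinates, with the degree-$1$ coordinates weighted once), replacing $x_1\mapsto 2^\sigma x_1$ multiplies any monomial of $P_k$ by at most $2^{j\sigma}$ — with equality exactly when the monomial is a pure power of the degree-$1$ variables. After dividing by $2^{j\sigma}$, every monomial of $P_k$ is therefore bounded (uniformly in $\sigma\ge 0$) by the corresponding monomial evaluated at $x_\sigma$ and $\theta$ (note $\theta$ is \emph{not} rescaled in the conclusion, which is why $\|\theta\|$, not $\|\theta_\sigma\|$, appears). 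This shows that $(x\cdot\theta)_\sigma = x_\sigma \cdot \theta + E_\sigma$ where the error $E_\sigma$, expressed in coordinates, is a sum of monomials each divisible by some positive power of a coordinate of $\theta$, with coefficients that are polynomials in the coordinates of $x_\sigma$ — and crucially the $\sigma$-dependence only makes these coefficients \emph{smaller}, never larger, relative to $\sigma=0$.

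From there the estimate follows by two applications of standard facts: first, $\bigl|\,\|(x\cdot\theta)_\sigma\| - \|x_\sigma\cdot\theta\|\,\bigr| \le C\|E_\sigma\|$ (reverse quasi-triangle inequality, valid since $\|\cdot\|$ is a quasi-norm), and $\|E_\sigma\|\le C\|\theta\|(1+\|x_\sigma\|)^{A}$ for some $A$; second, the genuine mean-value inequality for the homogeneous norm, $\bigl|\,\|x_\sigma\cdot\theta\| - \|x_\sigma\|\,\bigr|\le C\|\theta\|$, which is exactly $(\ref{eq:normmeanvalue})$ applied with the point $x_\sigma$. The main obstacle — and the point requiring genuine care rather than bookkeeping — is controlling the error term $E_\sigma$ uniformly in $\sigma$: one must verify that no monomial in the Campbell–Hausdorff expansion, after the asymmetric rescaling, produces a factor that grows with $\sigma$, and that every surviving error monomial carries a genuine factor of a $\theta$-coordinate (not merely of $x_\sigma$), so that the bound is linear in $\|\theta\|$ with no spurious $(1+\|x_\sigma\|)$-type blow-up that cannot be absorbed. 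This is precisely where one uses that the degree-$1$ coordinates of the product are purely additive and that the polynomials $P_k$ are homogeneous of the correct degree. The statement for $(\theta\cdot x)_\sigma$ is identical after swapping the roles of the two arguments, using that $P_k(\theta,x)$ is likewise homogeneous of degree $j$.
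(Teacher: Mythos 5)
Your approach — algebraically decomposing $(x\cdot\theta)_\sigma = x_\sigma\cdot\theta + E_\sigma$ and estimating the error — is genuinely different from the paper's, and it stalls exactly where you flag the ``main obstacle.'' You state $\|E_\sigma\|\le C\|\theta\|(1+\|x_\sigma\|)^A$ and then hope the factor $(1+\|x_\sigma\|)^A$ is spurious because every error monomial carries a $\theta$-coordinate. But a $\theta$-factor does not exclude an $x$-factor. On $\mathbb{H}^1$, writing $x=[x_1,x_2,x_3]$, the degree-$2$ coordinate of $E_\sigma$ works out to $(2^{-2\sigma}-1)\theta_3 + 2(2^{-2\sigma}-2^{-\sigma})x_2\theta_1 + 2(1-2^{-2\sigma})x_1\theta_2$, which genuinely grows with $|x_1|,|x_2|$; the bound on $\|E_\sigma\|$ you need simply does not hold without the polynomial-in-$x$ factor. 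Two further issues: the ``reverse quasi-triangle inequality'' $|\,\|(x\cdot\theta)_\sigma\|-\|x_\sigma\cdot\theta\|\,|\le C\|E_\sigma\|$ is not a consequence of the quasi-triangle inequality, which controls $\|a\cdot b\|$ with respect to the \emph{group} product, not the coordinate-wise Euclidean difference $E_\sigma$; and even if one first normalized $\|x\|=2$ so that $E_\sigma$ is Euclidean-small of size $\sim|\theta|$, an error of that size sitting in a degree-$j\ge 2$ coordinate contributes only $\sim\|\theta\|^{1/j}$ to the homogeneous norm $\|E_\sigma\|$, so $\|E_\sigma\|$ is the wrong intermediate quantity to control. (You also invoke~(\ref{eq:normmeanvalue}), which is the $\sigma=0$ case of the very proposition under proof; that is repairable, but it should be flagged.)

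The paper's proof is shorter and sidesteps all of this by scaling. Since $(\lambda\cdot v)_\sigma=\lambda\cdot(v_\sigma)$, the map $v\mapsto\|v_\sigma\|$ is homogeneous of degree $1$, so one rescales $x$ and $\theta$ simultaneously to reduce to $\|x\|=2$. For $\|\theta\|\le 1$, the Euclidean segment from $x$ to $x\cdot\theta$ then lies in a fixed compact set away from the origin, where $v\mapsto\|v_\sigma\|$ is smooth with Euclidean gradient bounded uniformly in $\sigma$; the ordinary Euclidean mean-value theorem together with $|x\cdot\theta-x|\lesssim|\theta|\lesssim\|\theta\|$ gives the claim. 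For $\|\theta\|\ge 1$, the quasi-triangle inequality and the elementary bound $\|v_\sigma\|\le\|v\|$ yield $|\,\|(x\cdot\theta)_\sigma\|-\|x_\sigma\|\,|\le\|x\cdot\theta\|+\|x\|\lesssim\|x\|+\|\theta\|\lesssim\|\theta\|$ directly. It is the initial rescaling — not any bookkeeping of Campbell--Hausdorff monomials — that disposes of the potential polynomial dependence on $x$, and this is the idea your argument is missing.
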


In particular, taking $\sigma = 0$, the norm function satisfies
\begin{equation} \label{eq:normmeanvalue}
\left|\, \|x \cdot \theta\| - \|x\| \,\right| \leq C \|\theta\|
\end{equation}
for all $x, \theta \in G$.

\begin{proof}[Proof of Proposition~\ref{prop:xtheta}]
We prove the desired inequalities using scale invariance. The key is that the function $x \mapsto \|x_{\sigma}\|$ is homogeneous of degree 1 and smooth away from $0$; in fact, $(\lambda \cdot x)_{\sigma} = \lambda \cdot (x_{\sigma})$, and the homogeneity of the above map follows: $$\|(\lambda \cdot x)_{\sigma}\| = \|\lambda \cdot (x_{\sigma})\| = \lambda \|x_{\sigma}\|.$$ By scaling $x$ and $\theta$ simultanenously, without loss of generality, we may assume that $\|x\|=2$. Now to prove the first inequality, we consider two cases: $\|\theta\| \leq 1$ and $\|\theta\| \geq 1$. If $\|\theta\| \leq 1$, then the (Euclidean) straight line joining $x$ and $x \cdot \theta$ stays in a compact set not containing $0$. Then we apply the Euclidean mean-value inequality to the function $x \mapsto \|x_{\sigma}\|$, which is smooth in this compact set and satisfies $| \nabla \|x_{\sigma}\| | \lesssim 1$ there uniformly in $\sigma$. It follows that if $\|\theta\| \leq 1$, we have $$|\, \|(x \cdot \theta)_{\sigma}\| - \|x_{\sigma}\| \, | \lesssim |\theta| \leq C \|\theta\|.$$ Here $|\theta|$ is the Euclidean norm of $\theta$. On the other hand, if $\|\theta\| \geq 1$, then $$|\, \|(x \cdot \theta)_{\sigma}\| - \|x_{\sigma}\| \, | \leq \|(x \cdot \theta)_{\sigma}\| + \|x_{\sigma}\| \leq \|x \cdot \theta\| + \|x\| \lesssim C (\|x\| + \|\theta\|) \lesssim \|\theta\|,$$ where the second to last inequality follows from the quasi-triangle inequality. Thus we have the desired inequality either case. One can prove the second inequality similarly.
\end{proof}

\section{Left- and right-invariant derivatives} \label{sect:leftright}

Next we describe how one mediates between left- and right-invariant derivatives when working with convolutions on $G$. First we have the following basic identities.

\begin{prop} \label{prop:deriv}
$$X_k (f*g) = f*(X_k g), \quad (X_k f)*g = f*(X_k^R g), \quad \text{and} \quad X_k^R(f*g) = (X_k^R f)*g,$$ assuming $f$, $g$ and their derivatives decay sufficiently rapidly at infinity.
\end{prop}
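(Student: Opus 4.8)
The plan is to deduce all three identities from their ``finite translation'' counterparts by differentiating under the integral sign. Throughout, write $R_t\phi(x) := \phi(x \cdot \exp(tX_k))$ and $L_t\phi(x) := \phi(\exp(tX_k) \cdot x)$ for the one-parameter families of right and left translations generated by $X_k$; by the very definitions of the left- and right-invariant vector fields attached to $X_k$ one has $X_k \phi = \left.\frac{d}{dt}\right|_{t=0} R_t \phi$ and $X_k^R \phi = \left.\frac{d}{dt}\right|_{t=0} L_t \phi$. So it suffices to establish, for each fixed $t$, the three identities $R_t(f*g) = f*(R_tg)$, $(R_tf)*g = f*(L_tg)$, and $L_t(f*g) = (L_tf)*g$, and then differentiate at $t=0$.

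For the first, use the representation $f*g(x) = \int_G f(y) g(y^{-1} \cdot x)\,dy$ from (\ref{eq:convdef}): then $R_t(f*g)(x) = \int_G f(y) g(y^{-1} \cdot x \cdot \exp(tX_k))\,dy = f*(R_tg)(x)$, which upon differentiation gives $X_k(f*g) = f*(X_kg)$. For the third, use instead $f*g(x) = \int_G f(x \cdot y^{-1}) g(y)\,dy$: then $L_t(f*g)(x) = \int_G f(\exp(tX_k) \cdot x \cdot y^{-1}) g(y)\,dy = (L_tf)*g(x)$, which gives $X_k^R(f*g) = (X_k^R f)*g$. In both cases the translation simply passes through the integral, so nothing beyond bookkeeping is involved.

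The middle identity is the one requiring a genuine manipulation. Starting from $(R_tf)*g(x) = \int_G f(x \cdot y^{-1} \cdot \exp(tX_k)) g(y)\,dy$, I would substitute $y = \exp(tX_k) \cdot z$. Since $G$ is nilpotent it is unimodular, so the Lebesgue measure (which is a Haar measure, as noted in Section~\ref{sect:pre}) is both left- and right-invariant and $dy = dz$; moreover $y^{-1} = z^{-1}\cdot\exp(-tX_k)$, so $x \cdot y^{-1} \cdot \exp(tX_k) = x \cdot z^{-1}$, and the integral becomes $\int_G f(x \cdot z^{-1}) g(\exp(tX_k) \cdot z)\,dz = f*(L_tg)(x)$. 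Differentiating at $t=0$ then yields $(X_kf)*g = f*(X_k^R g)$.

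The only point that is not purely formal is the interchange of $\left.\frac{d}{dt}\right|_{t=0}$ with $\int_G$, i.e. a dominated-convergence argument for the difference quotients; this is the main (though routine) obstacle, and it is exactly where the hypothesis that $f$, $g$ and their first derivatives decay rapidly at infinity is used. For $|t|\le 1$, say, the quasi-triangle inequality together with (\ref{eq:normmeanvalue}) gives $\|x\cdot y^{-1}\cdot\exp(tX_k)\|\ge c\|x\|-C(\|y\|+1)$, and the mean-value inequality of Proposition~\ref{prop:meanvalue} controls the relevant difference quotients; combining these produces a fixed integrable majorant, uniform in small $t$, which legitimizes the differentiations above. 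With this in hand the proposition follows.
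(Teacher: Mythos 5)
Your proof is correct. The paper supplies no argument of its own but cites Folland--Stein, and your route --- realizing $X_k$ and $X_k^R$ as generators of right and left translations, passing these translations through the two representations of $f*g$ (with a Haar-invariant change of variable in the middle identity), and then differentiating at $t=0$ under dominated convergence justified by the decay hypothesis --- is exactly the standard proof found there.
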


A proof can be found in Folland-Stein \cite[Page 22]{MR657581}.
Since our groups are not abelian in general, one has to be careful with these identities; one does not have, for instance, the identity between $X_k (f*g)$ and $(X_k f)*g$.

We also have the following flexibility of representing coordinate and left-invariant derivatives in terms of right-invariant ones.

\begin{prop} \label{prop:repR}
\begin{enumerate}[(a)]
\item \label{repRa}
For $1 \leq i \leq n$, the coordinate derivative $\frac{\partial}{\partial x_i}$ can be written as $$\frac{\partial}{\partial x_i} = \sum_{k=1}^{n_1} X_k^R D_{i,k}$$ where $D_{i,k}$ are homogeneous differential operators of degree $j-1$ if $n_{j-1} < i \leq n_j$.
\item \label{repRb}
In fact any $\frac{\partial}{\partial x_i}$, $1 \leq i \leq n$, can be written as a linear combination of $(\nabla_b^R)^{\alpha}$ with coefficients that are polynomials in $x$, where $\alpha$ ranges over a finite subset of the indices $\{1,\dots,n_1\}^{\mathbb{N}}$.
\item \label{lefttoright} If $X$ is a left-invariant vector field of degree 1, then for any Schwartz function $\phi$, there exists $n_1$ Schwartz functions $\hat{\phi}^{(1)}, \dots, \hat{\phi}^{(n_1)}$ such that
$$X\phi = X^R \phi + \sum_{j=1}^{n_1} X_j^R \hat{\phi}^{(j)},$$
with $$\int_G \hat{\phi}^{(j)}(y) dy = 0$$ for all $1 \leq j\leq n_1$. Schematically, we write
$$ X \phi = X^R \phi + \nabla_b^R \hat{\phi} $$
with $\int_G \hat{\phi}(y) dy = 0$.
\end{enumerate}
\end{prop}

\begin{proof}
The crux of the matter here is that our homogeneous groups are stratified. (\ref{repRa}) is rather well-known; see e.g. Proposition (1.26) of Folland-Stein~\cite{MR657581}, and the discussion that follows there; a similar statement with its proof can also be found in Stein \cite[Page 608, Lemma in Section 3.2.2]{MR1232192}.

(\ref{repRb}) follows immediately by iterating (\ref{repRa}).

Finally, to prove (\ref{lefttoright}), note that by (\ref{eq:repleftinvder}) and its analog for right-invariant derivatives, 
$$
X - X^R = \sum_{p=2}^m \sum_{n_{p-1} < k' \leq n_p} Q_{k'}(x) \frac{\partial}{\partial x_{k'}},
$$
where $Q_{k'}(x)$ is a homogeneous polynomial of degree $p-1$ if $n_{p-1} < k' \leq n_p$ (we suppress, in the notation, the dependence of $Q_{k'}$ on $X$ in order to simplify notations). Hence multiplication by $Q_{k'}(x)$ commutes with $\frac{\partial}{\partial x_{k'}}$, and using (\ref{repRa}) for $\frac{\partial}{\partial x_{k'}}$, we get
$$
X - X^R = \sum_{p=2}^m \sum_{n_{p-1} < k' \leq n_p} \sum_{j=1}^{n_1} X_j^R D_{k',j} Q_{k'}(x).
$$ 
As a result, for any Schwartz functions $\phi$, if we write
$$
\hat{\phi}^{(j)} := \sum_{p=2}^m \sum_{n_{p-1} < k' \leq n_p}  D_{k',j} [Q_{k'}(x) \phi(x)]
$$
for $1 \leq j \leq n_1$, then 
$$
X\phi - X^R \phi = \sum_{j=1}^{n_1} X_j^R \hat{\phi}^{(j)};
$$
also, it is easy to check that
$$
\int_G \hat{\phi}^{(j)} = 0 \quad \text{for all $1 \leq j \leq n_1$}
$$
since all $D_{k',j}$ arising in the definition of $\phi^{(j)}$ is a homogeneous differential operator of degree $\geq 1$. (\ref{lefttoright}) then follows.
\end{proof}

Next, we have the following lemma that allows one to write the left-invariant derivative of a bump function as sums of right-invariant derivatives of some other bumps.

\begin{prop} \label{prop:leftright}
Suppose $\phi$ is a Schwartz function on $G$ (by which we mean a Schwartz function on the underlying $\mathbb{R}^n$).
\begin{enumerate}[(a)]
\item \label{prop:leftrighta} If $\int_G \phi(x) dx = 0$, then there exists Schwartz functions $\varphi^{(1)}, \dots, \varphi^{(n_1)}$ such that $$\phi = \sum_{k=1}^{n_1} X_k^R \varphi^{(k)}.$$
\item \label{prop:leftrightb}
If furthermore $\int_G x_k \phi(x) dx = 0$ for all $1 \leq k \leq n_1$, then one can take the $\varphi^{(k)}$'s such that $\int_G \varphi^{(k)}(x) dx = 0$ for all $1 \leq k \leq n_1$. This will be the case, for instance, if $\phi$ is the left-invariant derivative of another Schwartz function whose integral is zero.
\end{enumerate}
\end{prop}

\begin{proof}
To prove part (a), first we claim that any Schwartz function $\phi$ on $\mathbb{R}^n$ that has integral zero can be written as $$\phi = \sum_{i=1}^n \frac{\partial \phi^{(i)}}{\partial x_i}$$ for some Schwartz functions $\phi^{(1)}, \dots, \phi^{(n)}$.

To see that we have such a representation, we use the Euclidean Fourier transform on $\mathbb{R}^n$. First, we observe that since the integral of $\phi$ is zero, which implies $\widehat{\phi}(0) = 0$, we have, for all $\xi \in \mathbb{R}^n$,
\begin{equation} \label{eq:smallxi}
\widehat{\phi}(\xi) = \int_0^1 \frac{d}{ds} \widehat{\phi}(s\xi) ds = \sum_{i=1}^n \xi_i \int_0^1 \frac{\partial \widehat{\phi}}{\partial \xi_i}(s\xi) ds.
\end{equation}
Taking inverse Fourier transform, one can write $\phi$ as a sum of coordinate derivatives of some functions. The problem is that $\int_0^1 \frac{\partial \widehat{\phi}}{\partial \xi_i}(s\xi) ds$, while smooth in $\xi$, does not decay as $\xi \to \infty$. So the above expression is only good for small $\xi$. But for large $\xi$, we have
\begin{equation} \label{eq:largexi}
\widehat{\phi}(\xi) = \sum_{i=1}^n \xi_i \frac{\xi_i}{|\xi|^2} \widehat{\phi}(\xi).
\end{equation} (Here $|\xi|$ is the Euclidean norm of $\xi$.) Hence if we take a smooth cut-off $\eta \in C^{\infty}_c(\mathbb{R}^n)$ with $\eta \equiv 1$ near the origin, then combining (\ref{eq:smallxi}) and (\ref{eq:largexi}), we have
\begin{align*}
\widehat{\phi}(\xi) &= \eta(\xi) \widehat{\phi}(\xi) + (1-\eta(\xi)) \widehat{\phi}(\xi) \\
&= \sum_{i=1}^n \xi_i \left( \eta(\xi) \int_0^1 \frac{\partial \widehat{\phi}}{\partial \xi_i}(s\xi) ds + (1-\eta(\xi))  \frac{\xi_i}{|\xi|^2} \widehat{\phi}(\xi) \right).
\end{align*}
Taking inverse Fourier transform, we get the desired decomposition in our claim above.

Now we return to the group setting. Let $\phi$ be a function on $G$ with integral zero. Using the above claim, identifying $G$ with the underlying $\mathbb{R}^n$, we write $\phi$ as $$\phi = \sum_{i=1}^n \frac{\partial \phi^{(i)}}{\partial x_i}$$ for some Schwartz functions $\phi^{(1)}, \dots, \phi^{(n)}$. Now by Proposition \ref{prop:repR} (a), for all $1 \leq i \leq n$, one can express the coordinate derivatives $\frac{\partial}{\partial x_i}$ in terms of the
right-invariant derivatives of order 1. Hence by rearranging the above identity we obtain Schwartz functions $\varphi^{(1)}, \dots, \varphi^{(n_1)}$ such that $$\phi = \sum_{k=1}^{n_1} X_k^R \varphi^{(k)},$$ as was claimed in (a).

Finally, if we had in addition $\int_G x_k \phi(x) dx = 0$ for all $1 \leq k \leq n_1$, then one can check, in our construction above, that $\int_G \phi^{(i)}(x) dx = 0$ for all $1 \leq i \leq n_1$. It follows that $\int_G \varphi^{(i)}(x) dx = 0$ for all $1 \leq i \leq n_1$; in fact $\varphi^{(i)}$ is just $\phi^{(i)}$ plus a sum of derivatives of Schwartz functions, which integrates to zero. The rest of the proposition then follows.
\end{proof}

We point out here that in the above two propositions, left-invariant derivatives could have worked as well as right-invariant ones. More precisely:

\begin{prop} \label{prop:repL}
Proposition \ref{prop:repR} and \ref{prop:leftright} remain true if one replaces all right-invariant derivatives by their left-invariant counterparts.
\end{prop}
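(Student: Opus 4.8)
The plan is to deduce everything at one stroke from the group inversion $\iota\colon G\to G$, $\iota(x)=x^{-1}$, which in the coordinates fixed in Section~\ref{sect:pre} is nothing but the linear map $x\mapsto -x=[-x_1,\dots,-x_n]$. Thus $\iota$ is a smooth self-diffeomorphism of $G$; it preserves the Haar (Lebesgue) measure, since $x\mapsto -x$ has determinant $(-1)^n$; it commutes with the dilations, i.e.\ $\iota(\lambda\cdot x)=\lambda\cdot\iota(x)$; and by the Campbell--Hausdorff formula it is an anti-automorphism of the group law, $\iota(x\cdot y)=\iota(y)\cdot\iota(x)$. In particular $\iota\circ L_a\circ\iota=R_{a^{-1}}$ and $\iota\circ R_a\circ\iota=L_{a^{-1}}$, where $L_a,R_a$ denote left and right translation, so $\iota$ carries left-invariant vector fields to right-invariant ones and vice versa. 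The main vehicle will be the conjugation map $P\mapsto\iota^{\#}P$ on differential operators on $G$, defined by $(\iota^{\#}P)f:=\bigl(P(f\circ\iota)\bigr)\circ\iota$.

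I would next record the properties of $\iota^{\#}$ that carry all the weight. Since $\iota$ is its own inverse, $\iota^{\#}$ is an involutive \emph{automorphism} (not anti-automorphism) of the algebra of differential operators: $\iota^{\#}(PQ)=(\iota^{\#}P)(\iota^{\#}Q)$ and $\iota^{\#}\circ\iota^{\#}=\mathrm{id}$. Because $\iota$ commutes with the dilations, $\iota^{\#}$ sends a homogeneous differential operator of degree $\ell$ to one of degree $\ell$. Because $\iota$ is linear, $\iota^{\#}\bigl(\tfrac{\partial}{\partial x_i}\bigr)=-\tfrac{\partial}{\partial x_i}$, and $\iota^{\#}$ sends multiplication by a polynomial $p(x)$ to multiplication by $p(-x)$, again a polynomial. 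Finally, pairing $\iota\circ L_a\circ\iota=R_{a^{-1}}$ with the fact that $d\iota_0=-\mathrm{id}$ on $\mathfrak{g}=T_0G$ gives $\iota^{\#}X_k=-X_k^R$ for $1\le k\le n_1$, and applying $\iota^{\#}$ once more, $\iota^{\#}X_k^R=-X_k$; so $\iota^{\#}$ interchanges left- and right-invariant first-order derivatives up to a sign.

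With these facts the proposition follows by applying $\iota^{\#}$ to each assertion. For Proposition~\ref{prop:repR}(a), applying $\iota^{\#}$ to $\frac{\partial}{\partial x_i}=\sum_k X_k^R D_{i,k}$ yields $\frac{\partial}{\partial x_i}=\sum_k X_k\bigl(\iota^{\#}D_{i,k}\bigr)$, and each $\iota^{\#}D_{i,k}$ is again a homogeneous differential operator of degree $j-1$ when $n_{j-1}<i\le n_j$. Part (b) is the same: the class of finite linear combinations of $(\nabla_b^R)^{\alpha}$ with polynomial coefficients is mapped by $\iota^{\#}$ onto the corresponding class for $\nabla_b$, after absorbing the signs $(-1)^{|\alpha|}$ and the substitution $x\mapsto-x$ into the polynomial coefficients. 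For Proposition~\ref{prop:leftright}(a), given a Schwartz $\phi$ with $\int_G\phi=0$, the function $\phi\circ\iota$ is Schwartz (as $\iota$ is a linear isomorphism) and still has integral $0$; the right-invariant version gives $\phi\circ\iota=\sum_k X_k^R\varphi^{(k)}$, and composing with $\iota$ gives $\phi=\sum_k X_k\psi^{(k)}$ with $\psi^{(k)}:=-\varphi^{(k)}\circ\iota$ Schwartz. For part (b), the hypothesis $\int_G x_k\phi\,dx=0$ for all $k\le n_1$ is preserved under $\phi\mapsto\phi\circ\iota$ (the substitution negates each $x_k$ and preserves $|dx|$), so the right-invariant version allows one to take $\int_G\varphi^{(k)}=0$, whence $\int_G\psi^{(k)}=0$; the concluding remark of \ref{prop:leftright}(b) carries over verbatim.

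I do not anticipate a genuine obstacle: all the content is in checking the four structural properties of $\iota^{\#}$, the most delicate being the sign in $\iota^{\#}X_k=-X_k^R$, which is exactly the interplay of $\iota\circ L_a\circ\iota=R_{a^{-1}}$ with $d\iota_0=-\mathrm{id}$. Beyond that one only has to be tidy with the bookkeeping—tracking composition orders (harmless, since $\iota^{\#}$ is an algebra automorphism) and folding stray signs into polynomial coefficients—and to note that $\iota$ preserves the Schwartz class, which is immediate.
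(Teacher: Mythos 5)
Your proof is correct. The paper states Proposition~\ref{prop:repL} without an explicit proof, the implicit intention being that the arguments for Propositions~\ref{prop:repR} and~\ref{prop:leftright} go through symmetrically if one re-runs them with left-invariant derivatives in place of right-invariant ones (e.g.\ using the representation $f*g(x)=\int_G f(y)g(y^{-1}\cdot x)\,dy$ in place of $\int_G f(x\cdot y^{-1})g(y)\,dy$, and the analogue of Proposition~\ref{prop:repR} that expresses $\partial/\partial x_i$ in terms of $X_k$ rather than $X_k^R$). Your route is genuinely different: rather than repeating the constructions, you observe that the group inversion $\iota(x)=x^{-1}=-x$ is a measure-preserving, dilation-commuting, linear anti-automorphism of $G$, so conjugation $P\mapsto\iota^{\#}P$ is an involutive algebra automorphism of differential operators that preserves homogeneity and sends $X_k\mapsto -X_k^R$, $X_k^R\mapsto -X_k$, $\partial/\partial x_i\mapsto -\partial/\partial x_i$, and polynomial coefficients $p(x)$ to $p(-x)$; applying $\iota^{\#}$ to the right-invariant statements and composing test functions with $\iota$ (which preserves the Schwartz class and the vanishing-moment hypotheses) gives the left-invariant ones at one stroke. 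The computations supporting this---in particular $\iota^{\#}X_k=-X_k^R$ (from $\iota\circ L_a\circ\iota=R_{a^{-1}}$ and $d\iota_0=-\mathrm{id}$), and the fact that the vanishing of $\int_G\phi$ and $\int_G x_k\phi$ for $k\le n_1$ is preserved under $\phi\mapsto\phi\circ\iota$---all check out. Your argument is more conceptual and explains \emph{why} the left/right symmetry holds (inversion exchanges the two), whereas the paper's implied approach is more pedestrian but requires no structural preliminaries about $\iota$; for the paper's purposes either suffices.
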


In what follows, we will develop the habit of consistently denoting the operator $f \mapsto f*K$ by $Kf$ if $K$ is a kernel. If $K$ is a Schwartz function, then $\nabla_b (Kf) = f*(\nabla_b K)$, where (each component of) $\nabla_b K$ is a Schwartz function with integral~0. Thus Proposition \ref{prop:leftright} can be applied to $\nabla_b K$; then one gets some kernels $\tilde{K}^{(k)}$'s that are Schwartz functions, and satisfy $$\nabla_b K = \sum_{k=1}^{n_1} X_k^R \tilde{K}^{(k)}.$$ Schematically we write $\nabla_b K = \nabla_b^R \tilde{K}$, and conclude that $$\nabla_b (Kf) = f*(\nabla_b^R \tilde{K}) = (\nabla_b f) * \tilde{K}.$$ Again writing $\tilde{K}f$ for $f*\tilde{K}$, we obtain the identity $$\nabla_b (Kf) = \tilde{K} (\nabla_b f).$$ If in addition $\int_G K(y) dy = 0$, then one also has $\int_G \tilde{K}(y) dy = 0$, by Proposition \ref{prop:leftright}(\ref{prop:leftrightb}). (The above could also be deduced easily from Proposition~\ref{prop:repR}(\ref{lefttoright}).)

\section{Littlewood-Paley theory and a refinement} \label{sect:LP}

We now turn to the Littlewood-Paley theory for $G$. We need actually two versions of that. First, let $\Psi$ be a Schwartz function on $G$ such that $\int_G \Psi(x) dx = 1$, and such that $\int_G x_k \Psi(x) dx = 0$ for all $1 \leq k \leq n_1$. Such a function exists; in fact one can just take a Schwartz function $\Psi$ on $\mathbb{R}^n$ whose Euclidean Fourier transform is identically 1 near the origin, and think of that as a function on $G$. Now let $\Delta(x) = 2^Q \Psi(2 \cdot x)- \Psi(x)$, and $\Delta_j(x) = 2^{jQ} \Delta(2^j \cdot x)$. Also write $\Delta_jf = f*\Delta_j$. We record here that the assumption $\int_G x_k \Psi(x) dx = 0$ for all $1 \leq k \leq n_1$ guarantees that
\begin{equation} \label{eq:momentDelta}
\int_G x_k \Delta(x) dx = 0
\end{equation} for all $1 \leq k \leq n_1$.

\begin{prop} \label{prop:Lpconv}
If $f \in L^p$ for some $1 < p < \infty$, then $T_K f := \sum_{|j| \leq K} \Delta_j f$ converges to $f$ in $L^p$ norm as $K \to \infty$. In other words,
$$
f = \sum_{j=-\infty}^{\infty} \Delta_j f,
$$
where the convergence is in $L^p$ norm.
\end{prop}

To prove this, we need the following convergence result in $L^p$:

\begin{prop} \label{prop:conv1}
Suppose $\Phi$ is a Schwartz function. Write $\Phi_j(x) = 2^{jQ} \Phi(2^j \cdot x)$. If $f \in L^p$ for some $1 < p < \infty$, then:
\begin{enumerate}[(a)]
\item \label{Sftozero} $f*\Phi_j$ converges to $0$ in $L^p$ norm as $j \to -\infty$.
\item \label{Sftof} If  $\int_G \Phi(y) dy = 1$, then $f*\Phi_j$ converges to $f$ in $L^p$ norm as $j \to +\infty$.
\item \label{Sftozero2} If  $\int_G \Phi(y) dy = 0$, then $f*\Phi_j$ converges to $0$ in $L^p$ norm as $j \to +\infty$.
\end{enumerate}
\end{prop}

Applying Proposition~\ref{prop:conv1} (\ref{Sftozero}) and (\ref{Sftof}) with $\Phi = \Psi$ yields Proposition~\ref{prop:Lpconv}.

\begin{proof}[Proof of Proposition~\ref{prop:conv1}]
It suffices to prove these claims when $f$ is continuous with compact support, since such functions are dense in $L^p$, $1 < p < \infty$, and the maps $f \mapsto f*\Phi_j$ are uniformly bounded on $L^p$ as $j$ varies over the integers. Suppose $f$ is continuous with compact support. Then for $1 < p < \infty$, 
$$\|f*\Phi_j\|_{L^p} \leq \|f\|_{L^1} \|\Phi_j\|_{L^p} = \|f\|_{L^1} 2^{jQ(1-1/p)} \to 0$$ as $j \to -\infty$, proving (\ref{Sftozero}). 

On the other hand, if $\int_G \Phi(y) dy = 1$, then
$$
f*\Phi_j(x) - f(x) = \int_G [f(x \cdot (2^{-j} \cdot y)^{-1}) - f(x)] \Phi(y) dy,
$$
so
$$
\|f*\Phi_j - f \|_{L^p} \leq \int_G \|f(x \cdot (2^{-j} \cdot y)^{-1}) - f(x)\|_{L^p(dx)} |\Phi(y)| dy \to 0
$$
as $j \to +\infty$, by uniform continuity of $f$, and that $f$ has compact support. This proves (\ref{Sftof}).

Finally, if instead $\int_G \Phi(y) dy = 0$, then
$$
f*\Phi_j(x) = \int_G [f(x \cdot (2^{-j} \cdot y)^{-1}) - f(x)] \Phi(y) dy,
$$
so
$$
\|f*\Phi_j\|_{L^p} \leq \int_G \|f(x \cdot (2^{-j} \cdot y)^{-1}) - f(x)\|_{L^p(dx)} |\Phi(y)| dy \to 0
$$
as $j \to +\infty$, by uniform continuity of $f$, and that $f$ has compact support. This proves (\ref{Sftozero2}).
\end{proof}

To put the Proposition~\ref{prop:conv1} in context, note that if $\Phi$ is as in the proposition, but $f$ is merely a (tempered) distribution, it is not necessarily true that $f*\Phi_j \to 0$ in the sense of distributions as $j \to -\infty$. In fact, if $f$ is a non-zero constant, then $f*\Phi_j = f$ for all $j \in \mathbb{Z}$, which does not converge to zero in the sense of distributions. 

We now turn to the analog of such results for functions in $\dot{NL}^{1,p}$:

\begin{prop} \label{prop:NL1pconv}
If $f$ is a function with $\nabla_b f \in L^p$ for some $1 < p < \infty$, then 
$$
\| \nabla_b (f - T_K f) \|_{L^p} \to 0
$$
as $K \to \infty$, where $T_K f$ is as defined in Proposition~\ref{prop:Lpconv}.
\end{prop}

\begin{proof}
First, note that if $X$ is any left-invariant vector field of degree 1, then by Proposition~\ref{prop:repR}(\ref{lefttoright}),
$
X(f*\Psi_j) = f*(X \Psi_j) = f*(X^R \Psi_j + \nabla_b^R \hat{\Psi}_j)
$
where $\int_G \hat{\Psi}(y) dy = 0$. It follows that
$$
X(f*\Psi_j) = (Xf)*\Psi_j + (\nabla_b f)*\hat{\Psi}_j.
$$
Applying Proposition~\ref{prop:conv1}(\ref{Sftof}) for the first term, and Proposition~\ref{prop:conv1}(\ref{Sftozero2}) for the second, we see that 
$$X(f*\Psi_j) \to Xf \quad \text{in $L^p$ norm, as $j \to +\infty$}.$$
Also, by Proposition~\ref{prop:conv1}(\ref{Sftozero}), we have
$$X(f*\Psi_j) \to 0 \quad \text{in $L^p$ norm, as $j \to -\infty$}.$$
Hence our desired conclusion follows.
\end{proof}

Next, we turn to some Littlewood-Paley inequalities. First, for $f \in L^p$, $1 < p < \infty$, we have 
\begin{equation} \label{eq:LPforward}
\left\| \left( \sum_{j=-\infty}^{\infty} |\Delta_j f|^2 \right)^{\frac{1}{2}} \right\|_{L^p} \leq C_p \|f\|_{L^p}.
\end{equation}
This holds because $\int_G \Delta(y) dy = 0$. In fact we have the following more refined Littlewood-Paley theorem:

\begin{prop} \label{prop:refinedLP}
If $D$ is a Schwartz function on $G$, $\int_G D(x) dx = 0$, and $A$ is a constant such that
$$|D(x)| \leq A \left(1+ \|x\|\right)^{-(Q+2)},$$
$$\left| \frac{\partial}{\partial x_k} D(x) \right| \leq A (1 + \|x\|)^{-(Q+r+1)} \quad \text{if $n_{r-1} < k \leq n_r$,} \quad 1 \leq r \leq m,$$
then defining $D_jf = f*D_j$ where $D_j(x) = 2^{jQ} D(2^j \cdot x)$, we have
$$
\left\| \left( \sum_{j=-\infty}^{\infty} |D_j f|^2 \right)^{\frac{1}{2}} \right\|_{L^p} \leq C_p A \|f\|_{L^p}, \quad 1 < p < \infty
$$
where $C_p$ is a constant independent of the kernel $D$.
\end{prop}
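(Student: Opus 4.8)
The plan is to read the estimate as the $L^p\to L^p(\ell^2)$ boundedness of the $\ell^2$-valued convolution operator $\mathcal{U}f:=(D_jf)_{j\in\mathbb{Z}}$, whose kernel is the $\ell^2$-valued function $K(x):=(D_j(x))_{j\in\mathbb{Z}}$, and to prove it by vector-valued Calder\'on--Zygmund theory on the homogeneous group $G$. It suffices to check: (i) $\mathcal{U}$ is bounded $L^2\to L^2(\ell^2)$ with norm $\lesssim A$; (ii) $\|K(x)\|_{\ell^2}\lesssim A\|x\|^{-Q}$; and (iii) $K$ satisfies the H\"ormander regularity condition $\int_{\|x\|\geq c\|h\|}\|K(x\cdot h)-K(x)\|_{\ell^2}\,dx\lesssim A$, uniformly in $h$, together with the analogous condition for left translations (needed only to reach $p>2$ by duality). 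Since all of (i)--(iii) will involve only the constant $A$ and the structural constants of $G$, the standard vector-valued Calder\'on--Zygmund theorem then yields $L^p$ boundedness with constant $\lesssim C_pA$, never depending on $D$ itself. A preliminary remark is that the hypotheses force $|\nabla_b D(x)|+|\nabla_b^R D(x)|\lesssim A(1+\|x\|)^{-(Q+2)}$: indeed $X_k$ (and likewise $X_k^R$), for $1\leq k\leq n_1$, equals $\partial/\partial x_k$ plus a sum of terms $P(x)\,\partial/\partial x_{k'}$ with $P$ homogeneous of degree $p-1$ when $n_{p-1}<k'\leq n_p$, and the weight $(1+\|x\|)^{-(Q+p+1)}$ in the bound on $\partial D/\partial x_{k'}$ is exactly what absorbs that polynomial factor. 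This is precisely why the derivative hypotheses are graded the way they are.

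For (i) I would argue by duality: $\|\mathcal{U}\|_{L^2\to L^2(\ell^2)}=\|\mathcal{U}^*\|$, where $\mathcal{U}^*((g_j)_j)=\sum_j g_j*\tilde{D}_j$ with $\tilde{D}_j(x):=\overline{D_j(x^{-1})}$; expanding $\|\sum_j g_j*\tilde{D}_j\|_{L^2}^2$ into a double sum, applying Young's inequality to each term, and invoking Schur's test on the matrix $(2^{-|j-k|})_{j,k}$, everything reduces to the almost-orthogonality estimate
\[
\|\tilde{D}_j*D_k\|_{L^1}\lesssim A^2\,2^{-|j-k|}.
\]
To prove this (say $j\geq k$) I would use the automorphic dilations to rescale to a single scale, expand one of the two factors as a difference using its vanishing integral ($\int_G\tilde{D}_j=0$ or $\int_G D_k=0$, whichever sits at the finer scale), and then apply the $L^1$ modulus-of-continuity bound $\int_G|D(y^{-1}\cdot x)-D(x)|\,dx\lesssim A\min(\|y\|,1)$ — which follows from Proposition~\ref{prop:meanvalueright} (respectively Proposition~\ref{prop:meanvalue} for the right-translation form) together with the decay of $\nabla_b^R D$ (respectively $\nabla_b D$) noted above. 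The gain $2^{-|j-k|}$ comes from $\int_G\|v\|\,|D(v)|\,dv\lesssim A$, an integral that converges precisely because the decay exponent $Q+2$ exceeds $Q+1$. Here the non-commutativity of $G$ must be watched carefully: the order of the convolution dictates whether the left- or the right-invariant mean-value inequality is the relevant one.

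For (ii), telescoping the pointwise bounds $|D_j(x)|\lesssim A\,2^{jQ}(1+2^j\|x\|)^{-(Q+2)}$ over $j$ (splitting at $2^j\|x\|\sim 1$) gives $\sum_j|D_j(x)|\lesssim A\|x\|^{-Q}$, hence $\|K(x)\|_{\ell^2}\leq\|K(x)\|_{\ell^1}\lesssim A\|x\|^{-Q}$. For (iii), I would bound $\int_{\|x\|\geq c\|h\|}\|K(x\cdot h)-K(x)\|_{\ell^2}\,dx$ by $\sum_j\int_{\|x\|\geq c\|h\|}|D_j(x\cdot h)-D_j(x)|\,dx$ and split the $j$-sum at $2^j\|h\|\sim 1$: for $2^j\|h\|\leq1$, Proposition~\ref{prop:meanvalue} plus the derivative decay give a per-scale bound $\lesssim A\,2^j\|h\|$; for $2^j\|h\|>1$, the size decay alone gives $\lesssim A\,(2^j\|h\|)^{-2}$; both series are geometric in $2^j\|h\|$ and sum to $\lesssim A$. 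The left-translation variant is handled identically, since $\tilde{D}$ satisfies the same hypotheses as $D$ with the same constant $A$ (as $\|x\|=\|x^{-1}\|$ and inversion is the coordinatewise sign change).

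The main obstacle is the almost-orthogonality estimate $\|\tilde{D}_j*D_k\|_{L^1}\lesssim A^2\,2^{-|j-k|}$: it must be squeezed out of just \emph{one} order of smoothness and polynomial decay of order exactly $Q+2$, and on a non-abelian $G$ it demands careful bookkeeping of left versus right translations (which is exactly what Propositions~\ref{prop:meanvalue} and~\ref{prop:meanvalueright} are designed to supply), as well as constant vigilance that every implicit constant depends only on $A$ and on $G$ and never on $D$. Once this estimate is in hand, the size and H\"ormander conditions are routine, and the passage to $L^p$ is the standard vector-valued Calder\'on--Zygmund theorem on spaces of homogeneous type.
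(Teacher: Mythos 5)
Your proposal is correct, and it takes a genuinely different route from the paper's. The paper verifies the three hypotheses of the vector-valued singular integral theory in Stein's monograph \cite[Ch.~13, \S 5.3]{MR1232192}: a pointwise size bound $\bigl(\sum_j |D_j(x)|^2\bigr)^{1/2}\lesssim \|x\|^{-Q}$, a pointwise bound on the coordinate gradient $\bigl(\sum_j |\partial_{x_k}D_j(x)|^2\bigr)^{1/2}\lesssim \|x\|^{-Q-r}$ (graded by the stratum of $x_k$), and a cancellation condition obtained by testing $(D_j)_j$ against normalized bump functions at all scales. The $L^2$ input is then absorbed into Stein's theorem, which handles it internally via a $T(1)$-type mechanism; the regularity input is a pointwise gradient estimate rather than an integral one. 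You instead establish $L^2(\ell^2)$ boundedness by hand through the almost-orthogonality estimate $\|\tilde D_j * D_k\|_{L^1}\lesssim A^2 2^{-|j-k|}$ and Schur's test, and you verify the classical integral H\"ormander condition for the $\ell^2$-valued kernel, in its two one-sided forms (right translation for the CZ decomposition giving $p<2$, left translation for the dual range). Both are valid paths through Calder\'on--Zygmund theory on a space of homogeneous type; the paper's version is shorter because it outsources the $L^2$ estimate and works with a slightly stronger pointwise hypothesis it has anyway, whereas yours is more self-contained and makes explicit exactly where the decay order $Q+2$ and the single derivative are each consumed. Your preliminary remark --- that the graded decay assumptions on the coordinate derivatives are calibrated precisely so that $|\nabla_b D|$ and $|\nabla_b^R D|$ inherit the bound $A(1+\|x\|)^{-(Q+2)}$ after absorbing the polynomial coefficients --- is the right observation and is used implicitly in the paper as well. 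One small caution: in the small-$j$ part of the H\"ormander estimate you integrate a supremum $\sup_{\|z\|\le a\|h\|}|\nabla_b D_j(x\cdot z^{-1})|$; to make this legitimate you should note that because $\|z\|\lesssim 2^{-j}$ and $\|x\|\ge c\|h\|$ with $c$ large relative to $a$ and the quasi-triangle constant, the shift $z$ does not change the scale-$2^{-j}$ decay envelope of $\nabla_b D_j$, so the supremum is still $\lesssim A\,2^{j(Q+1)}(1+2^j\|x\|)^{-(Q+2)}$ pointwise. With that spelled out, the argument is complete.
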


Later we will need the fact that the constant on the right hand side of the Littlewood-Paley inequality depends only on $A$ but not otherwise on the kernel $D$. Applying this proposition to $\Delta_j$ yields our claim (\ref{eq:LPforward}).

\begin{proof}
Without loss of generality we may assume $A = 1$. The proof of this proposition relies on a vector-valued singular integral theory on $G$, which is presented, for instance, in \cite[Chapter 13, Section 5.3]{MR1232192}. By our assumptions, it is readily checked that
\begin{equation} \label{eq:D1}
\left(\sum_{j=-\infty}^{\infty} |D_j(x)|^2 \right)^{1/2} \leq C \|x\|^{-Q},
\end{equation}
and
\begin{equation} \label{eq:D2}
\left(\sum_{j=-\infty}^{\infty} \left|\frac{\partial}{\partial x_k} D_j(x)\right|^2 \right)^{1/2} \leq C \|x\|^{-Q-r} \qquad \text{if $n_{r-1} < k \leq n_r$}, \quad 1 \leq r \leq m;
\end{equation}
in fact by scale invariance, it suffices to check this when $\|x\| \simeq 1$. For example, to bound $\sum_{j=-\infty}^{\infty} |\frac{\partial}{\partial x_k} D_j(x)|^2$ where $n_{r-1} < k \leq n_r$, it suffices to split the sum into $\sum_{j \geq 0}$ and $\sum_{j < 0}$; for the second sum, one bounds each term by $C  2^{j(Q+r)}$, and for the first sum, one bounds each term by $C  2^{j(Q+r)} 2^{-j(Q+r+1)}$. Putting these together yields the desired bound (\ref{eq:D2}). (\ref{eq:D1}) can be obtained similarly.

Furthermore, we need to check that for any normalized bump function $\Phi$ supported in the unit ball,
\begin{equation} \label{eq:D3}
\left(\sum_{j=-\infty}^{\infty} \left|\int_G D_j(x) \Phi(R \cdot x) dx \right|^2 \right)^{1/2} \leq C \quad \text{for all $R > 0$}.
\end{equation}
By scale invariance we may assume that $R \simeq 1$. Now when $j < 0$, $$\left| \int_G D_j(x) \Phi(R \cdot x) dx \right| \leq 2^{jQ} \|\Phi(R \cdot x)\|_{L^1} \leq C 2^{jQ},$$ since $|D_j(x)| \leq 2^{jQ}$ and $R \simeq 1$. When $j \geq 0$, $$\left| \int_G D_j(x) \Phi(R \cdot x) dx \right| \leq  \int_G | D_j(x)| |\Phi(R \cdot x) - \Phi(0)| dx \leq C \int_G |D_j(x)| \, R \|x\| dx \leq C 2^{-j},$$ with the first inequality following from $\int_G D(x) dx = 0$, and the second inequality following from Proposition \ref{prop:meanvalue}. Putting these together, we get the desired estimate (\ref{eq:D3}).

From (\ref{eq:D1}), (\ref{eq:D2}), (\ref{eq:D3}), the vector-valued singular integral theory mentioned above applies, and this gives the bounds in our current proposition. Since none of the constants $C$ in (\ref{eq:D1}), (\ref{eq:D2}), (\ref{eq:D3}) depend on the kernel $D$, neither does the bound of our conclusion depend on $D$.
\end{proof}

We will now state and prove the reverse Littlewood-Paley inequality. For that, we need the second version of Littlewood-Paley projections, given by the following proposition:

\begin{prop}\label{prop:equivLP}
There are $2n_1$ functions $\Lambda^{(1)}$, $\dots$, $\Lambda^{(2n_1)}$, each having zero integral on $G$, such that if $\Lambda^{(l)}_j (x) = 2^{jQ} \Lambda^{(l)}(2^j \cdot x)$ and $\Lambda^{(l)}_j f = f*\Lambda^{(l)}_j$, then $$\sum_{l=1}^{2n_1}  \left\| \left( \sum_{j=-\infty}^{\infty} |\Lambda^{(l)}_j f|^2 \right)^{\frac{1}{2}} \right\|_{L^p} \simeq_p \|f\|_{L^p},$$ whenever $f \in L^p$ and $1 < p < \infty$.
\end{prop}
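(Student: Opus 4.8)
The forward estimate $\sum_{l=1}^{2n_1}\big\|\big(\sum_j|\Lambda^{(l)}_jf|^2\big)^{1/2}\big\|_{L^p}\lesssim_p\|f\|_{L^p}$ will be automatic once the $\Lambda^{(l)}$ are produced as Schwartz functions of zero integral: each then satisfies the hypotheses of Proposition~\ref{prop:refinedLP}, so contributes $\lesssim_p\|f\|_{L^p}$, and there are only $2n_1$ of them. Thus the real content is (i) the choice of the $\Lambda^{(l)}$ and (ii) the reverse inequality $\|f\|_{L^p}\lesssim_p\sum_l\big\|\big(\sum_j|\Lambda^{(l)}_jf|^2\big)^{1/2}\big\|_{L^p}$. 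For (ii) I would run the standard duality argument: it suffices to produce a Calder\'on-type reproducing identity $I=\sum_{l}\sum_j (\Theta^{(l)}_j)^*\Lambda^{(l)}_j$ on $L^p$ in which the partner kernels $\Theta^{(l)}$ are again of the type to which Proposition~\ref{prop:refinedLP} (or a routine variant) applies, for then, with $\|g\|_{L^{p'}}\le1$,
$$\langle f,g\rangle=\sum_{l}\sum_j\langle\Lambda^{(l)}_jf,\Theta^{(l)}_jg\rangle\le\sum_l\Big\|\Big(\sum_j|\Lambda^{(l)}_jf|^2\Big)^{1/2}\Big\|_{L^p}\Big\|\Big(\sum_j|\Theta^{(l)}_jg|^2\Big)^{1/2}\Big\|_{L^{p'}}$$
by Cauchy--Schwarz in $j$ and H\"older, and the last $L^{p'}$ factor is $\lesssim\|g\|_{L^{p'}}$; taking the supremum over $g$ gives (ii).

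To build the reproducing identity I would start from the resolution of the identity already at hand, $f=\sum_j\Delta_jf$. Since $\int_G\Delta=0$ and (by (\ref{eq:momentDelta})) $\int_G x_k\Delta\,dx=0$ for $1\le k\le n_1$, Propositions~\ref{prop:leftright} and~\ref{prop:repL} let me write $\Delta$ \emph{both} as $\Delta=\sum_{k=1}^{n_1}X_k\psi^{(k)}$ and as $\Delta=\sum_{k=1}^{n_1}X_k^R\chi^{(k)}$ with all $\psi^{(k)},\chi^{(k)}$ Schwartz of zero integral; these $n_1+n_1=2n_1$ functions are the $\Lambda^{(l)}$. Rescaling gives $\Delta_j=2^{-j}\sum_kX_k\psi^{(k)}_j=2^{-j}\sum_kX_k^R\chi^{(k)}_j$, so using Proposition~\ref{prop:deriv} to move derivatives past convolutions --- $X_k(u*v)=u*(X_kv)$ on the left, $(X_ku)*v=u*(X_k^Rv)$ on the right --- one rewrites $\langle f,g\rangle=\sum_j\langle\Delta_jf,g\rangle$, after decomposing each factor by these Littlewood--Paley projections and shuttling the derivatives, as a double sum over the scales $j,j'$ of inner products of a Littlewood--Paley piece of $f$ against one of $g$, weighted by powers of $2^{\,j-j'}$ produced by the homogeneity of the shuttled derivatives.

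This double sum is resolved by almost orthogonality. Because $\Psi$ is chosen with $\widehat{\Psi}\equiv1$ near the origin, $\Delta$ --- and hence $\psi^{(k)}$, $\chi^{(k)}$ and all of their derivatives --- have vanishing moments of every order, so Proposition~\ref{prop:fg}, iterated (and together with Propositions~\ref{prop:fgconv} and~\ref{prop:fgconv2}), gives $\|\psi^{(k)}_j*\widetilde{\psi^{(l)}_{j'}}\|_{L^1},\ \|\chi^{(k)}_j*\widetilde{\chi^{(l)}_{j'}}\|_{L^1}\lesssim_N 2^{-N|j-j'|}$ for every $N$. This decay dominates the weights $2^{\,j-j'}$, so a single Cauchy--Schwarz in the scale parameters makes the double sum absolutely summable and bounds it by the product of the $\{\psi^{(k)}\}\cup\{\chi^{(k)}\}$ square function of $f$ and that of $g$; the near-diagonal part, up to a small (hence invertible) error, reproduces $I$. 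Equivalently one inverts $I-(\text{error})$ by a Neumann series, all pieces being uniformly $L^p$-bounded by Proposition~\ref{prop:refinedLP}. Either way one gets the identity required in (ii), and combining it with the forward bound yields the equivalence $\simeq_p$.

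The main obstacle is exactly the non-abelianness flagged in the introduction: one cannot symmetrically split a reproducing kernel into matching \emph{left} pieces, because $X_k(u*v)=u*(X_kv)$ is \emph{not} $(X_ku)*v$. The fix --- carry a derivative on the left factor and convert it to a right-invariant one on the partner kernel via $(X_ku)*v=u*(X_k^Rv)$ and Proposition~\ref{prop:leftright} --- is precisely what forces both the ``left'' family $\psi^{(k)}$ and the ``right'' family $\chi^{(k)}$ into the statement, giving $2n_1$ rather than $n_1$ functions; keeping track of all the left/right bookkeeping in the double sum is the most delicate part. A secondary point is that the heat-type kernel $\Delta$ is not perfectly localized in frequency, so the off-diagonal error must be made genuinely summable by hand; this is what the strong moment vanishing built into the choice of $\Psi$ is for. (One could instead take the $\Lambda^{(l)}$ to be derivatives of the sub-Laplacian heat kernel and invoke the spectral Calder\'on identity $4\int_0^\infty(t\mathcal{L}e^{-t\mathcal{L}})^2\,\tfrac{dt}{t}=I$, but the construction above has the advantage of staying within the Littlewood--Paley framework already set up.)
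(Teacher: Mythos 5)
Your overall blueprint matches the paper's: get the forward inequality from Proposition~\ref{prop:refinedLP}, reduce the reverse inequality by duality to producing a Calder\'on--type reproducing identity $\delta_0 = \sum_l \sum_j \Lambda^{(l)}_j * \Xi^{(l)}_j$ in which every kernel has zero integral, and use Proposition~\ref{prop:leftright} together with the left/right identities of Proposition~\ref{prop:deriv} to transfer the moment condition across the convolution, which is what forces $2n_1$ families rather than $n_1$. That part you have exactly right. But the \emph{route by which you manufacture the reproducing identity} is genuinely different from the paper's, and it is the part that does not quite close.

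The paper never starts from $\delta_0 = \sum_j \Delta_j$ and almost-orthogonality. Instead it telescopes a \emph{squared} approximate identity:
$$\delta_0 = \sum_j \bigl(\Psi_j*\Psi_j - \Psi_{j-1}*\Psi_{j-1}\bigr) = \sum_j \bigl(\Psi_j*(\Psi_j-\Psi_{j-1}) + (\Psi_j-\Psi_{j-1})*\Psi_{j-1}\bigr),$$
so the two-factor structure is there from the start with one zero-integral factor already in place; Proposition~\ref{prop:leftright} (with its companion Proposition~\ref{prop:repL}) is then used to write $\Psi_0-\Psi_{-1}$ as $\sum_k X_k^R\varphi^{(k)}$ in the first term and $\sum_k X_k\psi^{(k)}$ in the second, and an integration by parts via Proposition~\ref{prop:deriv} dumps that zero-moment onto the other factor, producing $(X_k\Psi)_j*\varphi^{(k)}_j$ and $\psi^{(k)}_j*(X_k^R\Psi)_{j-1}/2$, both factors of zero integral. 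No almost-orthogonality, no inversion, no extra moment hypotheses beyond $\int\Psi=1$ and $\int x_k\Psi=0$ for $k\le n_1$.

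By contrast, your route begins from the single-factor identity $\delta_0 = \sum_j\Delta_j$, decomposes $g$ as well, and tries to resolve the resulting off-diagonal by almost-orthogonality plus a Neumann-series inversion. Three concrete gaps stand in the way. First, the claimed decay $\|\psi^{(k)}_j*\widetilde{\psi^{(l)}_{j'}}\|_{L^1}\lesssim_N 2^{-N|j-j'|}$ for every $N$ is not delivered by Propositions~\ref{prop:fg}, \ref{prop:fgconv}, \ref{prop:fgconv2}: each of those exploits exactly one vanishing moment and yields exactly one factor of $2^{-|j-j'|}$. To get higher powers you would need a higher-order stratified Taylor/mean-value inequality and the corresponding moment calculus, which the paper does not develop; ``iterating'' Proposition~\ref{prop:fg} is not a well-defined step since there is only one convolution. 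Second, the Neumann-series inversion of ``$I-(\text{error})$'' needs the off-diagonal operator to have small norm, which would require precisely the unavailable $2^{-N}$ decay; with the one-moment decay $2^{-|j-j'|}$ the off-diagonal tail sums to a bounded, not small, constant, so the series does not visibly converge. Third, you need to \emph{assume} $\widehat{\Psi}\equiv 1$ near the origin to have all moments of $\Delta$ vanish; in the paper that choice is offered only as one possible example, and its proof of Proposition~\ref{prop:equivLP} uses nothing beyond the two stated moment conditions on $\Psi$. So your argument would be both more restrictive in its hypotheses and require supplementary almost-orthogonality machinery not in the paper. The telescope of $\Psi_j*\Psi_j$ is the move that makes all of this unnecessary.
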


Since the sum in $l$ is usually irrelevant for the estimates, we will abuse notation, and simply write $$\left\| \left( \sum_{j=-\infty}^{\infty} |\Lambda_j f|^2 \right)^{\frac{1}{2}} \right\|_{L^p} \simeq_p \|f\|_{L^p}.$$ Note that we do not claim $f = \sum_{l,j} \Lambda_j^{(l)} f$ here.

We remark that the reversed Littlewood-Paley inequality is false if $f$ is not in $L^p$ to begin with. For instance, if $f$ is a non-zero constant on $G$, then $\Lambda_j f = 0$ for all $j$, whereas $\|f\|_{L^p}$ is infinite for any $1 < p < \infty$. Hence one can only control the $L^p$ norm of $f$ by the square function, when one knows a priori that $f$ is already in $L^p$.

\begin{proof}
The key is to construct $2n_1$ Schwartz functions $\Lambda^{(1)}$, $\dots$, $\Lambda^{(2n_1)}$ and another $2n_1$ Schwartz functions $\Xi^{(1)}$, $\dots$, $\Xi^{(2n_1)}$, each having integral zero, such that if $f \in L^p$, $1 < p < \infty$, then
\begin{equation} \label{eq:deltarep}
f = \sum_{l=1}^{2n_1} \sum_{j=-\infty}^{\infty} f* \Lambda^{(l)}_j * \Xi^{(l)}_j
\end{equation}
where $\Lambda^{(l)}_j (x) = 2^{jQ} \Lambda^{(l)}(2^j \cdot x)$, $ \Xi^{(l)}_j(x) = 2^{jQ} \Xi^{(l)}(2^j \cdot x)$ and the convergence is in $L^p$ norm. Once we have such Schwartz functions, we can write, for $f \in L^p$ and $g \in L^{p'}$,
\begin{align*}
(f,g) &= \sum_{l=1}^{2n_1} \sum_{j=-\infty}^{\infty} \left( \Xi^{(l)}_j \Lambda^{(l)}_j f, g \right) \\
&= \sum_{l=1}^{2n_1} \sum_{j=-\infty}^{\infty} \left( \Lambda^{(l)}_j f, \Xi^{(l)*}_j g \right) \\
&\leq  \sum_{l=1}^{2n_1} \left\| \left( \sum_{j=-\infty}^{\infty} |\Lambda^{(l)}_j f|^2 \right)^{1/2} \right\|_{L^p} \left\| \left( \sum_{j=-\infty}^{\infty} |\Xi^{(l)*}_j g|^2 \right)^{1/2} \right\|_{L^{p'}},
\end{align*}
where $(f,g)$ denotes the inner product on $L^2(G)$ and $\Xi^{(l)*}_j$ is the adjoint of $\Xi^{(l)}_j$ with respect to this inner product, which is also given by the convolution against a Schwartz function of integral zero. Here $p'$ is the dual exponent to $p$. Hence if $1 < p < \infty$, we can estimate the $L^{p'}$ norm above by Proposition \ref{prop:refinedLP}, and get
$$
(f,g) \leq C_p\|g\|_{L^{p'}} \sum_{l=1}^{2n_1} \left\| \left( \sum_{j=-\infty}^{\infty} |\Lambda^{(l)}_j f|^2 \right)^{1/2} \right\|_{L^p}
$$
which is the desired reverse inequality since $g \in L^{p'}$ is arbitrary. The forward inequality follows already from Proposition \ref{prop:refinedLP}.

To construct Schwartz functions $\Lambda^{(1)}$, $\dots$, $\Lambda^{(2n_1)}$ and $\Xi^{(1)}$, $\dots$, $\Xi^{(2n_1)}$ such that they have integral zero and they satisfy (\ref{eq:deltarep}), we proceed as follows. Let $\Psi$ be as in the beginning of this section. Then $\Psi*\Psi$ is a Schwartz function (here $*$ is still the group convolution), and $\int_G \Psi*\Psi(x) dx = 1$. Let $\Psi_j(x) = 2^{jQ} \Psi(2^j \cdot x)$. Then by the argument of Proposition~\ref{prop:Lpconv}, for any $f$ in $L^p$, $1 < p < \infty$, we have
\begin{align}
f &= \sum_{j=-\infty}^{\infty} f*(\Psi_j*\Psi_j - \Psi_{j-1}*\Psi_{j-1}) \notag \\
& = \sum_{j=-\infty}^{\infty} f* \left( \Psi_j*(\Psi_j-\Psi_{j-1}) + (\Psi_j-\Psi_{j-1})*\Psi_{j-1} \right) \label{eq:deltarep1} 
\end{align}
with convergence in $L^p$.
Note that in the smaller brackets, we have the $L^1$ dilation of $\Psi_0-\Psi_{-1}$. Now $\Psi_0-\Psi_{-1}$ has integral zero, and the moments $\int_G x_k (\Psi_0(x)-\Psi_{-1}(x))dx = 0$ for $1 \leq k \leq n_1$. Hence by Proposition \ref{prop:leftright} and Proposition \ref{prop:repL}, we can write $\Psi_0-\Psi_{-1}$ as either $$\Psi_0-\Psi_{-1} = \sum_{k=1}^{n_1} X_k^R \varphi^{(k)} \quad \text{or} \quad \Psi_0-\Psi_{-1} = \sum_{k=1}^{n_1} X_k \psi^{(k)}$$ for some Schwartz functions $\varphi^{(1)}, \dots, \varphi^{(n_1)}$ and $\psi^{(1)}, \dots, \psi^{(n_1)}$, all of which have integral zero. Plugging the first identity back to the first term of (\ref{eq:deltarep1}) and the second identity into the second term of (\ref{eq:deltarep1}), and integrating by parts using Proposition \ref{prop:deriv}, we get
$$
f = \sum_{k=1}^{n_1} \sum_{j=-\infty}^{\infty} f*((X_k \Psi)_j*\varphi^{(k)}_j + \psi^{(k)}_j * (X_k^R \Psi)_{j-1}/2 ).
$$
Renaming the functions, we obtain the desired decomposition of $f$ as in (\ref{eq:deltarep}).
\end{proof}

To proceed further, we consider the maximal function on $G$, defined by $$Mf(x) = \sup_{r > 0} \frac{1}{r^Q} \int_{\|y\| \leq r} |f(x \cdot y^{-1})| dy.$$ We need the following properties of $M$:

\begin{prop} \label{prop:max}
\begin{enumerate}[(a)]
\item $M$ is bounded on $L^p$ for all $1 < p \leq \infty$;
\item $M$ satisfies a vector-valued inequality, namely $$\left\| \left( \sum_{j = -\infty}^{\infty} |Mf_j|^2 \right)^{\frac{1}{2}} \right\|_{L^p} \leq C_p \left\| \left( \sum_{j = -\infty}^{\infty} |f_j|^2 \right)^{\frac{1}{2}} \right\|_{L^p},$$ for $1 < p < \infty$.
\item Moreover, if $|\phi(y)| \leq \varphi\left(\|y\|\right)$ for some decreasing function $\varphi$, and $A = \int_G \varphi(\|y\|) dy$, then $|f*\phi(x)| \leq C A Mf(x)$ where $C$ is a constant depending only on $G$ but not on $\phi$.
\item In particular, $$\left\| \left( \sum_{j = -\infty}^{\infty} |\Lambda_j f_j|^2 \right)^{\frac{1}{2}} \right\|_{L^p} \leq C_p \left\| \left( \sum_{j = -\infty}^{\infty} |f_j|^2 \right)^{\frac{1}{2}} \right\|_{L^p}$$ for all $1 < p < \infty$.
\end{enumerate}
\end{prop}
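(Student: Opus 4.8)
The plan is to obtain (a) from the standard Hardy--Littlewood theory on spaces of homogeneous type, to prove (c) by a layer-cake decomposition of the majorant $\varphi$ into averages over balls, to quote (b) from the Fefferman--Stein vector-valued maximal theory, and finally to derive (d) formally by combining (c) with (b).

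For (a), I would first note that the balls $B(x,r) = \{y \in G : \|x^{-1}\cdot y\| \le r\}$ have measure $c\,r^Q$ (using the scaling law $d(\lambda\cdot x) = \lambda^Q\,dx$ and the homogeneity of the norm), and that the quasi-triangle inequality $\|x\cdot y\| \le C(\|x\|+\|y\|)$ together with $\|x\| = \|x^{-1}\|$ makes $G$ a space of homogeneous type. Since $G$ is unimodular, a change of variables identifies $Mf(x)$ with a fixed multiple of $\sup_{r>0}|B(x,r)|^{-1}\int_{B(x,r)}|f|$, i.e.\ the Hardy--Littlewood maximal function over these balls. The usual Vitali covering argument then yields the weak-type $(1,1)$ bound, and interpolation with the trivial $L^\infty$ estimate gives (a); this is all in Folland--Stein \cite{MR657581}.

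The one genuinely self-contained computation is (c). Since $\varphi$ is decreasing, I would write, by the layer-cake formula, $\varphi(\|y\|) = \int_0^\infty \mathbf{1}_{\{\|y\| < R(t)\}}\,dt$ with $R(t) := \sup\{s : \varphi(s) > t\}$, and estimate
\[
|f*\phi(x)| \le \int_G |f(x\cdot y^{-1})|\,\varphi(\|y\|)\,dy = \int_0^\infty\Big(\int_{\|y\| < R(t)}|f(x\cdot y^{-1})|\,dy\Big)\,dt \le Mf(x)\int_0^\infty R(t)^Q\,dt,
\]
using the trivial bound $\int_{\|y\|\le r}|f(x\cdot y^{-1})|\,dy \le r^Q Mf(x)$; then Fubini gives $\int_0^\infty R(t)^Q\,dt = c^{-1}\int_G\varphi(\|y\|)\,dy = c^{-1}A$, hence $|f*\phi(x)| \le CA\,Mf(x)$ with $C$ depending only on $G$. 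The point to keep track of here, and the reason (c) is stated in this form, is precisely that $C$ is independent of $\phi$.

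For (b) I would simply invoke the Fefferman--Stein vector-valued maximal inequality, which holds on any space of homogeneous type once the scalar weak-$(1,1)$ bound from (a) is available (see \cite{MR1232192}). Part (d) is then immediate: each $\Lambda^{(l)}$ is Schwartz, so $|\Lambda^{(l)}(y)| \le \varphi(\|y\|)$ for a decreasing $\varphi(t) = C_N(1+t)^{-N}$ which, for $N > Q$, is integrable on $G$; hence $|\Lambda^{(l)}_j(y)| \le 2^{jQ}\varphi(2^j\|y\|)$ and $\int_G 2^{jQ}\varphi(2^j\|y\|)\,dy = \int_G\varphi(\|z\|)\,dz =: A$ is independent of $j$ and $l$, so by (c) we get the pointwise bound $|\Lambda^{(l)}_j f_j(x)| \le CA\,Mf_j(x)$ uniformly in $j$ and $l$; applying (b) to the sequence $(f_j)$ then finishes the proof. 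I do not expect a real obstacle here: the proposition packages standard harmonic analysis on homogeneous groups, and the only slightly delicate point, the kernel-independence of the constant in (c), is delivered automatically by the layer-cake argument.
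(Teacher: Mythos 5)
Your proposal is correct, and it essentially reproduces the standard argument that the paper outsources: the authors prove nothing here but simply cite Stein \cite[Chapter 2]{MR1232192}, noting that $(G,\|\cdot\|,dx)$ is a space of homogeneous type. Your layer-cake proof of (c), the Vitali/interpolation route to (a), the Fefferman--Stein reference for (b), and the deduction of (d) by majorizing the Schwartz kernels $\Lambda^{(l)}_j$ by $L^1$-normalized decreasing radial functions are exactly the steps the cited reference would supply, so there is no genuine divergence in method.
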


The proof of these can be found in Stein \cite[Chapter 2]{MR1232192}, once we notice that the group $(G,\|\cdot\|,dx)$ satisfies the real-variable structures set out in Chapter 1 of the same monograph.

We also need a Littlewood-Paley inequality for derivatives:

\begin{prop} \label{prop:derivLP}
Suppose $\nabla_b f \in L^p$ for some $1 < p < \infty$. Then $$\left\| \left(\sum_{j=-\infty}^{\infty} |2^j \Delta_j f|^2\right)^{1/2} \right\|_{L^p} \leq C_p \|\nabla_b f\|_{L^p}.$$
\end{prop}

\begin{proof}
Just notice that by Proposition~\ref{prop:leftright}(\ref{prop:leftrightb}) and (\ref{eq:momentDelta}), there exists Schwartz functions $\varphi^{(k)}$, $k=1,2,\dots, n_1$, such that 
$$
\Delta(x) =  \sum_{k=1}^{n_1} X_k^R \varphi^{(k)}(x),
\qquad \text{with} \qquad 
\int_G \varphi^{(k)}(x) dx = 0.
$$
We write schematically $\Delta = \nabla_b^R \varphi$. Then
$$2^j \Delta_j f = 2^j f * (\nabla_b^R \varphi)_j = (\nabla_b f)*\varphi_j.$$ Since $\int \varphi(x) dx = 0$, it follows that
\begin{align*}
\left\|\left(\sum_{j = -\infty}^{\infty} |2^j \Delta_j f|^2 \right)^{1/2} \right\|_{L^p} 
&=\left\|\left(\sum_{j = -\infty}^{\infty} |(\nabla_b f)*\varphi_j|^2 \right)^{1/2} \right\|_{L^p} \\
&\leq C_p \|\nabla_b f\|_{L^p}
\end{align*}
for $1 < p < \infty$, the last inequality following from Proposition~\ref{prop:refinedLP}.
\end{proof}

The following is a Bernstein-type inequality for our Littlewood-Paley decomposition $\Delta_j$:

\begin{prop} \label{prop:Bern}
If $f \in \dot{NL}^{1,Q}$, then for all $j \in \mathbb{Z}$, we have $$\|\Delta_j f\|_{L^{\infty}} \leq C \|\nabla_b f\|_{L^Q}$$ where $C$ is independent of both $f$ and $j$.
\end{prop}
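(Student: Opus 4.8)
The plan is to move one subelliptic derivative off the kernel $\Delta_j$ and onto $f$, after which the estimate follows from H\"older's inequality, the point being that $Q$ is precisely the critical exponent at which all the dilation factors cancel. Since $\int_G \Delta(x)\,dx = 0$, Proposition~\ref{prop:leftright}(a) lets me write $\Delta = \sum_{k=1}^{n_1} X_k^R \tilde{\Delta}^{(k)}$ for some Schwartz functions $\tilde{\Delta}^{(1)}, \dots, \tilde{\Delta}^{(n_1)}$ on $G$. (The \emph{right}-invariant decomposition is the useful one here, because of the identity $(X_k f)*g = f*(X_k^R g)$ in Proposition~\ref{prop:deriv}.) Each $X_k^R$ is homogeneous of degree $1$ and $\Delta_j(x) = 2^{jQ}\Delta(2^j \cdot x)$, so rescaling gives
\begin{equation*}
\Delta_j = 2^{-j} \sum_{k=1}^{n_1} X_k^R \bigl((\tilde{\Delta}^{(k)})_j\bigr), \qquad (\tilde{\Delta}^{(k)})_j(x) := 2^{jQ}\tilde{\Delta}^{(k)}(2^j \cdot x).
\end{equation*}

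Convolving with $f$ and applying Proposition~\ref{prop:deriv}, I obtain
\begin{equation*}
\Delta_j f = f * \Delta_j = 2^{-j} \sum_{k=1}^{n_1} (X_k f) * (\tilde{\Delta}^{(k)})_j.
\end{equation*}
For a general $f$ with $\nabla_b f \in L^Q$ one takes the right-hand side as the definition of $\Delta_j f$; it makes sense because $X_k f \in L^Q$ while $(\tilde{\Delta}^{(k)})_j$ is Schwartz, and it agrees with $f * \Delta_j$ whenever $f$ and its derivatives decay rapidly, so the definitions are consistent. Now apply H\"older's inequality with exponents $Q$ and $Q' := Q/(Q-1)$:
\begin{equation*}
|\Delta_j f(x)| \leq 2^{-j} \sum_{k=1}^{n_1} \|X_k f\|_{L^Q} \, \|(\tilde{\Delta}^{(k)})_j\|_{L^{Q'}}.
\end{equation*}

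It remains to compute the rescaled kernel norm by a change of variables using $d(2^j \cdot x) = 2^{jQ}\,dx$, which gives $\|(\tilde{\Delta}^{(k)})_j\|_{L^{Q'}} = 2^{jQ(Q'-1)/Q'}\|\tilde{\Delta}^{(k)}\|_{L^{Q'}} = 2^{j}\|\tilde{\Delta}^{(k)}\|_{L^{Q'}}$, the crucial equality $Q(Q'-1)/Q' = 1$ holding exactly because $1/Q + 1/Q' = 1$. The factor $2^{j}$ cancels the $2^{-j}$ in front, and since each $\tilde{\Delta}^{(k)}$ is Schwartz hence lies in $L^{Q'}$, we conclude
\begin{equation*}
\|\Delta_j f\|_{L^{\infty}} \leq \Bigl( \sum_{k=1}^{n_1} \|\tilde{\Delta}^{(k)}\|_{L^{Q'}} \Bigr) \sum_{k=1}^{n_1} \|X_k f\|_{L^Q} \leq C \|\nabla_b f\|_{L^Q},
\end{equation*}
with $C$ depending only on $G$ and the fixed kernel $\Delta$, independent of $f$ and $j$.

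I do not expect a serious obstacle. The only points needing a little care are: using the right-invariant rather than left-invariant decomposition of $\Delta$, so that Proposition~\ref{prop:deriv} transfers the derivative onto $f$; and checking that the exponent must be exactly the homogeneous dimension $Q$ for the dilation powers $2^{\pm j}$ to balance. This last fact is precisely the one-Littlewood--Paley-piece shadow of the (failed) critical embedding $\dot{NL}^{1,Q} \hookrightarrow L^{\infty}$: each $\Delta_j f$ is uniformly bounded, even though $f = \sum_j \Delta_j f$ need not be.
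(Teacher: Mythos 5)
Your proof is correct, and it is genuinely more direct than the paper's. The paper recycles the representation $\Delta_j f = (\nabla_b f)*K^{(j)}$ from the proof of Proposition~\ref{prop:derivLP}, where the kernel $K^{(j)}$ is an infinite sum $2^{-j}\sum_{j'} (\tilde\Lambda*\tilde\Xi)_{j'+j}*\tilde\Delta_j$ coming from inserting the resolution of the identity $\delta_0 = \sum_{j'}\Lambda_{j'}*\Xi_{j'}$; the authors then must prove the uniform $L^{Q/(Q-1)}$ bound for $K^{(j)}$ by splitting the sum over $j'$ and invoking Propositions~\ref{prop:fgconv} and~\ref{prop:fgconv2} separately for $j'\ge 0$ and $j'<0$. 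You bypass all of this: since $\int_G \Delta = 0$, Proposition~\ref{prop:leftright}(a) lets you write $\Delta = \sum_k X_k^R\tilde\Delta^{(k)}$ directly, and then scaling, Proposition~\ref{prop:deriv}, and H\"older at the critical pair $(Q,Q')$ finish the argument in one line. The dilation bookkeeping — that $X_k^R$ is homogeneous of degree one, so $\Delta_j = 2^{-j}\sum_k X_k^R\bigl((\tilde\Delta^{(k)})_j\bigr)$, and that $\|(\tilde\Delta^{(k)})_j\|_{L^{Q'}} = 2^j\|\tilde\Delta^{(k)}\|_{L^{Q'}}$ precisely when $Q' = Q/(Q-1)$ — is carried out correctly. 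What the paper's longer route buys is essentially nothing extra for this particular proposition; the authors use (\ref{eq:Deltajrep}) because that machinery was already in hand from Proposition~\ref{prop:derivLP}, where the Littlewood-Paley resolution really is necessary, whereas your argument isolates the minimal ingredient ($\Delta$ has vanishing integral) that the Bernstein bound actually needs.
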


\begin{proof}
Suppose $\nabla_b f \in L^Q$. Using the notations in the proof of Proposition \ref{prop:derivLP}, $$\Delta_j f = (\nabla_b f)*(2^{-j} \varphi_j).$$ Since $\|2^{-j} \varphi_j\|_{L^{Q/(Q-1)}}$ is a constant $C$ independent of $j$, we see that 
$$
\|\Delta_j f\|_{L^{\infty}} \leq C \|\nabla_b f\|_{L^Q}
$$
as desired.
\end{proof}

Finally, we need the ``heat kernels'' which we define as follows. Let $S$ be a non-negative Schwartz function on $G$, which satisfies $$\int_G S(y) dy = 1 \quad \text{and} \quad S(x) \simeq e^{-\|x\|} \quad \text{for all $x \in G$}.$$ (For instance, $S(x) = c e^{-(1+\|x\|^{2m!})^{\frac{1}{2m!}}}$ will do for a suitable $c$, since $$(1+\|x\|^{2m!})^{\frac{1}{2m!}}-\|x\| \to 0$$ as $\|x\| \to \infty$.) We write $$S_j(x) := 2^{jQ} S(2^j \cdot x),$$ and as usual let $S_j f := f*S_j$.

\section{Algebraic preliminaries} \label{sect:alg}

In this section we describe some algebraic structures we use in the proof of Lemma~\ref{lem:approxsub}. First we have the following algebraic identity:

\begin{prop} \label{prop:alg}
For any sequence $\{a_j\}$, one has $$1 = \sum_{j = 1}^N a_j \prod_{1 \leq j' < j} (1-a_{j'}) + \prod_{j=1}^N (1-a_j) \quad \text{for any $N \in \mathbb{N}$.}$$
\end{prop}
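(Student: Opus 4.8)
This is a telescoping identity, so the plan is to recognize the summand $a_j \prod_{1 \le j' < j}(1-a_{j'})$ as a difference of consecutive partial products and then collapse the sum. Concretely, I would introduce the notation $P_0 := 1$ and $P_j := \prod_{j'=1}^{j}(1-a_{j'})$ for $1 \le j \le N$, so that the claimed identity reads $1 = \sum_{j=1}^{N} a_j P_{j-1} + P_N$.

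The key observation is the one-line factorization $a_j P_{j-1} = (1-(1-a_j))P_{j-1} = P_{j-1} - P_j$, valid for each $1 \le j \le N$. Summing this over $j$ from $1$ to $N$ telescopes: $\sum_{j=1}^{N} a_j P_{j-1} = \sum_{j=1}^{N}(P_{j-1}-P_j) = P_0 - P_N = 1 - P_N$. Adding $P_N$ to both sides gives $\sum_{j=1}^{N} a_j P_{j-1} + P_N = 1$, which is exactly the assertion once one unwinds the definition $P_{j-1} = \prod_{1 \le j' < j}(1-a_{j'})$ (with the empty product equal to $1$ when $j=1$).

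If a reader prefers, the same statement can be proved by induction on $N$: the base case $N=1$ is $a_1 + (1-a_1) = 1$, and for the inductive step one splits off the $j=N+1$ term from the sum, combines $a_{N+1}P_N + (1-a_{N+1})P_N = P_N$, and applies the induction hypothesis. I would present the telescoping version as the main argument since it is shortest. There is no real obstacle here; the only thing to be careful about is the empty-product convention for the $j=1$ term, which I would state explicitly.
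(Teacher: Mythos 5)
Your telescoping argument is correct, and the key identity $a_j P_{j-1} = P_{j-1} - P_j$ is precisely the algebraic content of the paper's proof, which simply expands $1 = a_1 + (1-a_1) = a_1 + a_2(1-a_1) + (1-a_1)(1-a_2) = \cdots$ line by line (i.e., repeatedly applies $P_{j-1} = a_j P_{j-1} + P_j$ in the forward direction). So this is the same approach, just packaged more formally as a collapsing sum; both your main telescoping write-up and your induction alternative are fine.
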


\begin{proof}
This is just saying that
\begin{align*}
1 &= a_1 + (1-a_1)\\
&= a_1 + a_2(1-a_1) + (1-a_1)(1-a_2)\\
&= a_1 + a_2(1-a_1) + a_3(1-a_1)(1-a_2) + (1-a_1)(1-a_2)(1-a_3)\\
&= \dots
\end{align*}
\end{proof}

Now if $\{a_j\}$ is a sequence indexed by $j \in \mathbb{Z}$ instead, with $a_j = 0$ for all $|j| > K$ for some positive integer $K$, then by letting $b_j:= a_{K+1-j}$, and applying the previous proposition to $\{b_j\}$ instead of $\{a_j\}$ with $N = 2K+1$, one has
$$
1 = \sum_{|j| \leq K} a_j \prod_{j < j' \leq K} (1-a_{j'}) + \prod_{|j| \leq K} (1-a_j).
$$
Hence we have:

\begin{prop} \label{prop:algest}
If $\{a_j\}_{j \in \mathbb{Z}}$ is a sequence of numbers satisfying $0 \leq a_j \leq 1$ for all $j$, with only finitely many non-zero terms, then
$$
0 \leq \sum_{j} a_j \prod_{j' > j} (1-a_{j'}) \leq 1.
$$
\end{prop}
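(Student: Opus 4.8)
The plan is to deduce the bilateral infinite-sum estimate from the finite algebraic identity of Proposition~\ref{prop:alg} (in its renamed form) by a monotone-convergence argument. First I would note that for each fixed $j$ the infinite product $\prod_{j' > j}(1-a_{j'})$ makes sense: since every factor lies in $[0,1]$, the partial products $\prod_{j < j' \leq k}(1-a_{j'})$ are non-increasing in $k$ and bounded below by $0$, hence converge to a limit in $[0,1]$. Consequently each summand $a_j \prod_{j'>j}(1-a_{j'})$ is non-negative, so the lower bound $0 \leq \sum_{j=-\infty}^{\infty} a_j \prod_{j'>j}(1-a_{j'})$ is immediate; the content of the proposition is the upper bound by $1$.

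For the upper bound, fix integers $M \leq N$. For any $M \leq j \leq N$ we have
$$\prod_{j'>j}(1-a_{j'}) = \left( \prod_{j < j' \leq N}(1-a_{j'}) \right) \prod_{j'>N}(1-a_{j'}) \leq \prod_{j < j' \leq N}(1-a_{j'}),$$
because the trailing factor lies in $[0,1]$. Hence, using $a_j \geq 0$,
$$\sum_{j=M}^{N} a_j \prod_{j'>j}(1-a_{j'}) \leq \sum_{j=M}^{N} a_j \prod_{j < j' \leq N}(1-a_{j'}).$$
The right-hand side is precisely the expression in Proposition~\ref{prop:alg} after relabelling the index set $\{1,\dots,N-M+1\}$ as $\{M,\dots,N\}$, so it equals $1 - \prod_{j=M}^{N}(1-a_j) \leq 1$. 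Thus every ``windowed'' partial sum $\sum_{j=M}^{N} a_j \prod_{j'>j}(1-a_{j'})$ is bounded by $1$.

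Finally, since all the summands are non-negative, these partial sums increase as the window $\{M,\dots,N\}$ enlarges and are uniformly bounded by $1$; letting $N \to +\infty$ and then $M \to -\infty$ (equivalently, taking the supremum over all finite windows) yields $\sum_{j=-\infty}^{\infty} a_j \prod_{j'>j}(1-a_{j'}) \leq 1$, which finishes the proof. I expect the only delicate point — and it is a minor one — to be the bookkeeping that makes sense of the infinite products and legitimizes the passage from the finite identity of Proposition~\ref{prop:alg} to the two-sided infinite sum; no idea beyond Proposition~\ref{prop:alg} and the monotonicity of the partial sums is required.
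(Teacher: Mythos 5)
Your proof is correct and follows essentially the same route the paper takes: the paper derives Proposition~\ref{prop:algest} from the renamed finite identity of Proposition~\ref{prop:alg} and leaves the passage to the bilateral infinite sum implicit, while you carefully fill in the bookkeeping (well-definedness of the infinite products, comparison of truncated products, monotone convergence of the windowed partial sums).
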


Next, suppose we are given a function $h$ on $G$ such that
\begin{equation} \label{eq:fconstr}
h = \sum_j h_j
\end{equation}
for some functions $h_j$, where all $h_j$ satisfy $\|h_j\|_{L^{\infty}} \leq C$, and only finitely many $h_j$'s are non-zero. We will describe a paradigm in which we approximate $h$ by an $L^{\infty}$ function that we will call $\tilde{h}$. In fact, motivated by the algebraic proposition we have above, we let
\begin{equation} \label{eq:Fconstr}
\tilde{h}=\sum_j h_j \prod_{j' > j} (1 - U_{j'})
\end{equation}
where $U_j$ are some suitable non-negative functions such that
\begin{equation} \label{eq:fjGjboundby1}
C^{-1} |h_j| \leq U_j \leq 1 \quad \text{pointwisely for all $j$,}
\end{equation}
and only finitely many $U_j$'s are non-zero. Then at least $\|\tilde{h}\|_{L^{\infty}} \leq C$ because
$$
|\tilde{h}(x)| \leq \sum_j |h_j(x)| \prod_{j' > j} (1 - U_{j'}(x)) \leq C \sum_j U_j(x) \prod_{j' > j} (1 - U_{j'}(x)) \leq C,
$$
where the last inequality follows from Proposition~\ref{prop:algest}. One would now ask whether this could be any sensible approximation of $h$; in particular, let's try to see whether $\|X_k (h-\tilde{h})\|_{L^Q}$ is small, for $k=1,\dots,n_1$. To understand this, write $h = \sum_j h_j$.
Then
$$
h-\tilde{h} = \sum_j h_j \left( 1 - \prod_{j' > j} (1-U_{j'}) \right).
$$
Using Proposition~\ref{prop:alg} to expand the latter bracket and rearranging the resulting sum, we get
\begin{equation} \label{eq:f-Fconstr}
h-\tilde{h} = \sum_j U_j V_j,
\end{equation}
where $V_j$ is defined by
\begin{equation} \label{eq:Hconstr}
V_j = \sum_{j' < j} h_{j'} \prod_{j' < j'' < j} (1-U_{j''}).
\end{equation}
It follows that
$$
X_k (h-\tilde{h}) = \sum_j (X_k U_j)V_j + \sum_j U_j(X_k V_j).
$$
By Proposition~\ref{prop:algest} and (\ref{eq:fjGjboundby1}), we have
\begin{equation} \label{eq:Hbound}
|V_j| \leq C \quad \text{pointwisely for all $j$}.
\end{equation}
This can be shown using the same argument we have used to bound $\|\tilde{h}\|_{L^{\infty}}$.
Furthermore, we have
\begin{equation} \label{eq:gradHbound}
|X_k V_j| \leq C \sum_{j' < j} (|X_k h_{j'}| + |X_k U_{j'}|).
\end{equation}
In fact this follows from
\begin{equation} \label{eq:derivrule}
X_k V_j = \sum_{j' < j} \left( (X_k h_{j'}) - (X_k U_{j'}) V_{j'} \right) \prod_{j' < j'' < j} (1-U_{j''}).
\end{equation}
(\ref{eq:derivrule}) holds because when one computes $X_k V_j$, either the derivative hits $h_{j'}$, or the derivative hits $U_{j'}$ for some $j' < j$; furthermore, the coefficient of $X_k U_{j'}$ in $X_k V_j$ is $$- V_{j'} \prod_{j' < j'' < j} (1-U_{j''}).$$
From (\ref{eq:Hbound}) and (\ref{eq:gradHbound}), it follows that
\begin{equation} \label{eq:gradf-Fbound}
|X_k (h-\tilde{h})| \leq C\left(\sum_j |X_k U_j| + \sum_j U_j \sum_{j' < j} (|X_k h_{j'}| + |X_k U_{j'}|)\right);
\end{equation}
we will hope to estimate this in $L^Q$ norm on $G$, if we choose $U_j$ suitably.

In the following sections, equations (\ref{eq:fconstr}), (\ref{eq:Fconstr}), (\ref{eq:fjGjboundby1}), (\ref{eq:f-Fconstr}), (\ref{eq:Hconstr}), (\ref{eq:Hbound}), (\ref{eq:gradHbound}), (\ref{eq:derivrule}) and (\ref{eq:gradf-Fbound}) will form a basic paradigm of our construction.

\section{Proof of Lemma~\ref{lem:approxsub}: Outline} \label{sect:outline}

We give an outline of the proof of Lemma \ref{lem:approxsub} in this section, and we defer the detailed proof to the next four sections. The proof will be in 4 steps. First, given $\delta > 0$ and  $f \in \dot{NL}^{1,Q}$, we choose a positive integer $K$, such that
$$
T_K f := \sum_{|j| \leq K} \Delta_j f
$$
satisfies 
\begin{equation} \label{eq:TKf1}
\|\nabla_b (f-T_K f)\|_{L^Q} \leq \frac{\delta}{3} \|\nabla_b f\|_{L^Q}.
\end{equation}
The existence of such $K$ is guaranteed by Proposition~\ref{prop:NL1pconv}. Lemma~\ref{lem:approxsub} then reduces to finding an $F \in \dot{NL}^{1,Q} \cap L^{\infty}$, such that 
$$
\|X_k (T_K f - F)\|_{L^Q} \leq \frac{2\delta}{3} \|\nabla_b f\|_{L^Q}, \quad k = 2, \dots, n_1,
$$
with
$$
\|F\|_{L^{\infty}} + \|\nabla_b F\|_{L^Q} \leq C_{\delta} \|\nabla_b f\|_{L^Q}
$$
for some constant $C_{\delta}$ independent of $f$. The advantage of taking this preliminary step is so that we will effectively be dealing with finite sums and products, when we construct our function $F$ below. On the other hand, since the $K$ we picked here by Proposition~\ref{prop:NL1pconv} depends on both $\delta$ and $f$, in what follows, care must be taken to ensure that all constants we pick are uniform in $K$. It is only by doing so that at the end, our constant $C_{\delta}$ can be chosen independent of $f$.

Next, we will show the existence of some positive integer $N$, such that if we define $f_0$ by
\begin{equation} \label{eq:f0def}
f_0 := T_K f - \sum_{|j| \leq K} S_{j+N} \Delta_j f,
\end{equation}
then 
\begin{equation} \label{eq:f01}
\|\nabla_b f_0\|_{L^Q} \leq \frac{\delta}{3} \|\nabla_b f\|_{L^Q}.
\end{equation}
The constant $N$ will be chosen to depend only on $\delta$, but not on $K$ nor $f$.

To proceed further, let $\sigma$ be a large positive integer to be chosen, which will depend only on $N$ but not on $K$ nor $f$. (In particular, $\sigma$ will depend only on $\delta$ but not $f$.) Suppose from now on $f$ satisfies an additional ``smallness'' condition:
\begin{equation} \label{eq:smallassump}
\|\nabla_b f\|_{L^Q} \leq c_G 2^{-NQ} 2^{-\sigma(Q-1)}.
\end{equation}
Here the constant $c_G$ depends only on the group $G$. We will then define the auxiliary controlling functions $\omega_j$ and $\tomega_j$ as follows. For $x = [x_1,\dots,x_n]$, we recall $x^{\sigma} := [2^\sigma x_1, x_2, \dots, x_n]$ and $x_{\sigma} := 2^{-\sigma} x^{\sigma}$. Let $E$ be a Schwartz function, defined by $$E(x) := e^{-(1+\|x_{\sigma}\|^{2m!})^{\frac{1}{2m!}}}.$$ We write $\Lambda$ for the lattice $\{2^{-N} \cdot s \colon s \in \mathbb{Z}^n\}$ of scale $2^{-N}$ in $G$, and for $j \in \mathbb{Z}$, we define $\omega_j$ by
\begin{equation} \label{eq:omegajdef}
\omega_j(x) := \left( \sum_{r \in \Lambda} \left[ S_{j+N}|\Delta_j f|(2^{-j} \cdot r) E(r^{-1} \cdot (2^j \cdot x))\right ]^Q \right)^{1/Q}
\end{equation}
for all $x \in G$. Here $N$ is the positive integer we chose previously. Note that $\omega_j$ is like a discrete convolution, except that we are using the $l^Q$ norm in $r$ rather than the sum in $r$. We also define $\tomega_j$, for $j \in \mathbb{Z}$, by
\begin{equation} \label{eq:tomegajdef}
\tomega_j := 2^{NQ}E_j S_{j+N} |\Delta_j f|
\end{equation}
where $E_j f := f*E_j$, and $E_j(y) := 2^{jQ} E(2^j y)$. $\omega_j$ and $\tomega_j$ will be used to control the Littlewood-Paley pieces $\Delta_j f$ of $f$; in fact respectively they will control $h_j$ and $g_j$ we introduce below. They will also have better derivatives in the $X_2, \dots, X_{n_1}$ directions than in the $X_1$ direction. 

Now let $f_0$ be as defined in (\ref{eq:f0def}). We decompose $T_K f-f_0 = \sum_{|j| \leq K} S_{j+N} \Delta_j f$ into the sum of two functions $g$ and $h$ as follows. Let $R >> \sigma$ be another positive integer to be chosen, which will again depend only on $N$ but not on $K$ nor $f$. Let $\zeta$ be a smooth function on $[0,\infty)$ such that $\zeta \equiv 1$ on $[0,1/2]$, and $\zeta \equiv 0$ on $[1,\infty)$. For $|j| \leq K$, let
$$
\zeta_j(x) := \zeta \left(\frac{2^j \omega_j(x)}{\sum_{-K \leq k < j, \text{ }  k \equiv j (\text{mod } R)} 2^k \omega_k(x)}\right).
$$
We remark here that $\zeta_j(x)$ is not the $L^1$ dilation of $\zeta$, i.e. $\zeta_{j}(x)\neq 2^{jQ}\zeta(2^j \cdot x)$; it is a smooth approximation of the characteristic function of the set  $\{2^{j} \omega_j <\sum_{-K \leq k<j, k\equiv j(\text{mod }R)}2^k\omega_{k}\}$. We then define two functions $h$ and $g$ such that $$h :=  \sum_{j \in \mathbb{Z}} h_j, \quad g := \sum_{j \in \mathbb{Z}} g_j;$$ here we define, for $|j| \leq K$,
$$
h_j (x) := (1-\zeta_j(x)) S_{j+N}(\Delta_j f)(x),
$$
$$
g_j (x) := \zeta_j(x) S_{j+N}(\Delta_j f)(x)
$$
and we define $h_j := g_j := 0$ if $|j| > K$.
It follows that \begin{equation} \label{eq:TKfdecomp} T_K f = f_0 + g + h. \end{equation}
By $\zeta_j$'s definition, we can think of $h$ as the part where morally ``the high frequencies dominate the low frequencies'', and $g$ as the part where morally the reverse happens. We will now approximate $h$ and $g$ separately by functions in $L^{\infty}$.

First, we will approximate $h$ by some $L^{\infty}$ function $\tilde{h}$ using the paradigm of approximation we discussed in the previous section. Namely, we define
$$
\tilde{h} := \sum_{j \in \mathbb{Z}} h_j \prod_{j' > j} (1-U_{j'})
$$
where we define
$$
U_j := (1-\zeta_j) \omega_j, \quad \text{for $|j| \leq K$},
$$
and $U_j := 0$ if $|j| > K$.
We will prove that
\begin{equation} \label{eq:h1}
\|\tilde{h}\|_{L^{\infty}} \leq C,
\end{equation}
\begin{equation} \label{eq:h2}
\|X_k (h-\tilde{h})\|_{L^Q} \leq C_N 2^{-\sigma/Q} R \|\nabla_b f\|_{L^Q} + C_N 2^{\sigma Q} R \|\nabla_b f\|_{L^Q}^2, \quad k = 2, \dots, n_1,
\end{equation}
and
\begin{equation} \label{eq:h3}
\|X_1 (h-\tilde{h})\|_{L^Q} \leq C_N 2^{\sigma Q} R  \|\nabla_b f\|_{L^Q}.
\end{equation}

Furthermore, using the same paradigm, we approximate $g$ by some $\tilde{g} \in L^{\infty}$, where
$$
\tilde{g} := \sum_{c=0}^{R-1} \sum_{j \equiv c (\text{mod } R) }  g_j \prod_{ j' > j \atop j' \equiv c (\text{mod }R)} (1-G_{j'});
$$
here we define
$$
G_j := \sum_{t > 0, j-t \geq -K \atop t \equiv 0 (\text{mod } R)} 2^{-t} \tomega_{j-t} , \quad \text{for $|j| \leq K$},
$$
and we define $G_j:=0$ for $|j| > K$.
We will prove that
\begin{equation} \label{eq:g1}
\|\tilde{g}\|_{L^{\infty}} \leq C R,
\end{equation}
\begin{equation} \label{eq:g2}
\|\nabla_b (g-\tilde{g})\|_{L^Q} \leq C_N 2^{2\sigma(Q+1)} R^2 2^{-R} \|\nabla_b f\|_{L^Q}.
\end{equation}

Note that one can estimate the full $\nabla_b$ of the error here (rather than only the ``good'' derivatives $X_k$ for $k=2,...,n_1$). We will see in later sections that the ``smallness'' assumption (\ref{eq:smallassump}) on our given $f$ is used, in the proofs of (\ref{eq:h1}), (\ref{eq:h2}), (\ref{eq:h3}), (\ref{eq:g1}) and (\ref{eq:g2}). Also, all constants $C$ and $C_N$ in (\ref{eq:h1}), (\ref{eq:h2}), (\ref{eq:h3}), (\ref{eq:g1}) and (\ref{eq:g2}) will be independent of $K$ and $f$.

We now put everything together. Define $F$ by
$$
F:= \tilde{g} + \tilde{h}.
$$
Then by (\ref{eq:h1}) and (\ref{eq:g1}),
\begin{equation} \label{eq:FLinfty} 
\|F\|_{L^{\infty}} \leq C R.
\end{equation}
Also, by (\ref{eq:TKfdecomp}), 
$$
T_K f - F = f_0 + (g-\tilde{g}) + (h-\tilde{h}).
$$
By (\ref{eq:f01}), (\ref{eq:h2}) and (\ref{eq:g2}), for $k = 2, \dots, n_1$,
\begin{align*}
&\|X_k (T_K f-F)\|_{L^Q} \\
\leq & \|\nabla_b f_0\|_{L^Q} + \|X_k (h-\tilde{h})\|_{L^Q} + \|\nabla_b (g-\tilde{g})\|_{L^Q} \\
\leq & \frac{\delta}{3} \|\nabla_b f\|_{L^Q} + C_N 2^{-\sigma/Q} R \|\nabla_b f\|_{L^Q} + C_N 2^{\sigma Q} R \|\nabla_b f\|_{L^Q}^2 \\ &\quad + C_N 2^{2\sigma(Q+1)} R^2 2^{-R} \|\nabla_b f\|_{L^Q}.
\end{align*}
If one now chooses $R = B \sigma$ where $B$ is a constant $> 2(Q+1)$ (say $B = 2(Q+2)$ will do), and chooses $\sigma$ to be sufficiently big with respect to $N$, then this is bounded by
\begin{equation} \label{eq:Xkgoodboundtemp}
\frac{\delta}{2} \|\nabla_b f\|_{L^Q} + A_{\delta} \|\nabla_b f\|_{L^Q}^2
\end{equation}
where $A_{\delta}$ is a constant depending only on $G$ and $\delta$ (but not on $K$ nor $f$). Hence combining with (\ref{eq:TKf1}), we get
\begin{equation} \label{eq:conclu1}
\|X_k (f-F) \|_{L^Q} \leq \frac{5\delta}{6} \|\nabla_b f\|_{L^Q} + A_{\delta} \|\nabla_b f\|_{L^Q}^2, \quad k = 2,\dots, n_1.
\end{equation}
Similarly, one can show that
\begin{equation} \label{eq:conclu2}
\|X_1 (f-F) \|_{L^Q} \leq A_{\delta} \|\nabla_b f\|_{L^Q},
\end{equation}
and from (\ref{eq:FLinfty}), we get
\begin{equation} \label{eq:conclu3}
\|F\|_{L^{\infty}} \leq A_{\delta}.
\end{equation}

Let's summarize what we have got so far. Given $\delta > 0$, we have picked positive integers $N$, $\sigma$, $R$, and a constant $A_{\delta}$, all depending only on $\delta$, such that whenever $f \in \dot{NL}^{1,Q}$ satisfies (\ref{eq:smallassump}), namely $\|\nabla_b f\|_{L^Q} \leq c_G 2^{-NQ} 2^{-\sigma(Q-1)}$, then (\ref{eq:conclu1}), (\ref{eq:conclu2}), (\ref{eq:conclu3}) holds.

Finally, if now a general $f$ in $\dot{NL}_1^Q$ is given, and $\delta$ is sufficiently small, one will rescale $f$ (by multiplication by a small constant) so that $$\|\nabla_b f\|_{L^Q} = \min\{c_G 2^{-NQ} 2^{-\sigma(Q-1)}, \delta (6A_{\delta})^{-1}\}.$$ This is possible because the right hand side is a number depending only on $\delta$. More precisely, let's call the number on the right hand side above $c_{\delta}$, and let 
$$
f_* = \frac{c_{\delta} f}{\|\nabla_b f\|_{L^Q}},
$$ 
so that 
$$\|\nabla_b f_*\|_{L^Q} = c_{\delta}.$$ In particular, then $f_*$ satisfies our smallness assumption (\ref{eq:smallassump}).
One then construct, using the process described above, an approximation $F_*$ for this rescaled $f_*$. From (\ref{eq:conclu1}), for $k = 2, \dots, n_1$, we have $$\|X_k (f_*-F_*)\|_{L^Q} \leq \delta \|\nabla_b f_*\|_{L^Q}$$ as desired. Also, by (\ref{eq:conclu2}),
$$
\|\nabla_b (f_*-F_*)\|_{L^Q} \leq A_{\delta} \|\nabla_b f_*\|_{L^Q},
$$
and by (\ref{eq:conclu3}),
$$\|F_*\|_{L^{\infty}} \leq A_{\delta} \frac{\|\nabla_b f_*\|_{L^Q}}{\min\{c_G 2^{-NQ} 2^{-\sigma(Q-1)}, \delta (2A_{\delta})^{-1}\}}  = C_{\delta} \|\nabla_b f_*\|_{L^Q}.$$
If one now undo the rescaling we have done, and let 
$$
F :=  \frac{\|\nabla_b f\|_{L^Q} F_*}{c_{\delta}},
$$
then the above shows that $F$ satisfies all the desired conclusions in Lemma~\ref{lem:approxsub}. Hence this completes the proof of Lemma~\ref{lem:approxsub}, modulo the proof of the estimates (\ref{eq:f01}), (\ref{eq:h1}), (\ref{eq:h2}), (\ref{eq:h3}), (\ref{eq:g1}), (\ref{eq:g2}). These will be proved in the next four sections.

In the sequel, $C$ will denote constant independent of $\delta$, $K$, $N$, $\sigma$ and $R$. All constants will be independent of $K$, and all dependence of constants on $N$, $\sigma$ and $R$ will be made clear in the notations.

\section{Estimating $f_0$} \label{sect:f0}

We now begin the proof of our approximation Lemma~\ref{lem:approxsub}. We fix $\delta > 0$, and let $f \in \dot{NL}^{1,Q}$, $K \in \mathbb{N}$. Let $N$ be a large positive integer to be chosen. Define $f_0$ as in (\ref{eq:f0def}). First, 
$$f_0 = \sum_{|j| \leq K} (I - S_{j+N}) \Delta_j f = \sum_{|j| \leq K} \sum_{k \geq N} (S_{j+k+1} - S_{j+k})\Delta_j f$$ where $I$ is the identity operator, and the convergence in the second equality is in $\dot{NL}^{1,Q}$. Now let $P$ be the kernel of the operator $S_1-S_0$. Then $P$ is a Schwartz function, and $$\int_G P(y) dy = 0.$$ Furthermore, if we define $P_k f = f*P_k$ where $P_k(y) = 2^{kQ} P(2^k \cdot y)$, then $$f_0 = \sum_{|j| \leq K} \sum_{k \geq N} P_{j+k}(\Delta_j f),$$ with convergence in $\dot{NL}^{1,Q}$. Using the notation at the end of Section~\ref{sect:leftright}, one gets
$$
\nabla_b f_0 = \sum_{|j| \leq K} \sum_{k \geq N} \tilde{P}_{j+k} \tilde{\Delta}_j (\nabla_b f),
$$
where the convergence in the sum in $k$ is in $L^Q$, the kernels $\tilde{P}$ and $\tilde{\Delta}$ are Schwartz, and satisfy $$\int_G \tilde{P}(y) dy = \int_G \tilde{\Delta}(y) dy = 0$$ since $\int_G P = \int_G \Delta = 0$.
Now $\nabla_b f_0 \in L^Q$, since it is a finite sum (over $j$) of functions in $L^Q$. It follows from Proposition \ref{prop:equivLP} that
\begin{align*}
\|\nabla_b f_0\|_{L^Q} \simeq& \left\| \left(\sum_{r=-\infty}^{\infty} \left| \Lambda_r \sum_{|j| \leq K} \sum_{k \geq N} \tilde{P}_{j+k} \tilde{\Delta}_j (\nabla_b f)\right|^2 \right)^{1/2} \right\|_{L^Q} \notag.
\end{align*}
To proceed further, we replace, in the right hand side of the above formula, the index $j$ by $j+r$, and then pull out the summation in $j$ and $k$. We can also at this point let $K \to +\infty$ on the right hand side of the formula. Then we obtain the following bound for $\|\nabla_b f_0\|_{L^Q}$, namely
\begin{align}
& \sum_{j=-\infty}^{\infty} \sum_{k \geq N} \left\| \left(\sum_{r=-\infty}^{\infty} \left| \Lambda_{r}  \tilde{P}_{j+r+k} \tilde{\Delta}_{j+r} (\nabla_b f)\right|^2 \right)^{1/2} \right\|_{L^Q} \notag,
\end{align}
which is equal to
\begin{align}
& \sum_{j=-\infty}^{\infty} \sum_{k \geq N} \left\| \left(\sum_{r=-\infty}^{\infty} \left| \Lambda_{r+j}  \tilde{P}_{r+k} \tilde{\Delta}_{r} (\nabla_b f)\right|^2 \right)^{1/2} \right\|_{L^Q} \label{eq:f_0est}
\end{align}
if we first replace the index $r$ by $r-j$, and then replace $j$ by $-j$. Now we split the sum into two parts, one where $j < 0$, and one where $j \geq 0$, and show that both of them are bounded by $C 2^{-N} \|\nabla_b f\|_{L^Q}$. The sum where $j < 0$ can be estimated using Proposition \ref{prop:fgconv2}: in fact $\Lambda_{r+j}  \tilde{P}_{r+k} \tilde{\Delta}_{r} (\nabla_b f) =(\tilde{\Delta}_{r} (\nabla_b f)) * (\tilde{P}_{r+k} * \tilde{\Lambda}_{r+j})$, and by Proposition \ref{prop:fgconv2}, $$|\tilde{P}_{r+k} * \tilde{\Lambda}_{r+j}|(x) = \left|( \tilde{P} * \tilde{\Lambda}_{k-j} )_{r+j} \right|(x) \leq C 2^{-(k-j)} 2^{Q(r+j)} \left( 1 + 2^{r+j} \|x\| \right)^{-(Q+1)}$$ since $k-j>0$, $\Lambda$, $\tilde{P}$ are Schwartz, and $\int_G \tilde{P}(y) dy = 0$. From this we infer, using Proposition~\ref{prop:max}~(c), that $$|\Lambda_{r+j} \tilde{P}_{r+k} \tilde{\Delta}_r(\nabla_b f)| \leq C 2^{-(k-j)} M \tilde{\Delta}_r(\nabla_b f).$$ It follows that the sum where $j < 0$ is bounded by
$$
\sum_{j < 0} \sum_{k \geq N} C 2^{-(k-j)} \left\| \left(\sum_{r = -\infty}^{\infty} (M \tilde{\Delta}_r(\nabla_b f))^2 \right)^{1/2} \right\|_{L^Q} \leq C 2^{-N} \|\nabla_b f\|_{L^Q}
$$
as desired. Next, for the sum where $j \geq 0$, one defines an auxiliary kernel $D$ by $$D = \tilde{\Delta} * \tilde{P}_k * \Lambda_j \quad \text{if $j \geq 0$ and $k \geq N$}.$$ We claim that
\begin{prop} \label{prop:D}
$D$ is a Schwartz function,
$$|D(x)| \leq C 2^{-j-k} (1+\|x\|)^{-(Q+2)},$$
$$\left|\frac{\partial}{\partial x_l} D(x)\right| \leq C 2^{-j-k} (1+\|x\|)^{-(Q+r+1)} \quad \text{if $n_{r-1} < l \leq n_r$}, \quad 1 \leq r \leq m,$$
and
$$\int_G D(x) dx = 0.$$
\end{prop}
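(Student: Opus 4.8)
The plan is to peel off the two fine-scale factors $\tilde{P}_k$ and $\Lambda_j$ one at a time, each time gaining a power of $2$ from a cancellation, using only the integral estimates of Section~\ref{sect:pre}. That $D$ is Schwartz is clear: $\tilde{\Delta}$, $\tilde{P}_k$ and $\Lambda_j$ are Schwartz functions on the underlying $\mathbb{R}^n$, and a convolution of Schwartz functions on $G$ is again Schwartz. The vanishing integral is equally quick: since $\int_G(\phi*\psi)\,dx=\big(\int_G\phi\,dx\big)\big(\int_G\psi\,dx\big)$ and $\int_G\tilde{\Delta}=0$ (recorded in Section~\ref{sect:f0}), we get $\int_G D=0$.

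For the size estimate I would set $B:=\tilde{\Delta}*\tilde{P}_k$. Since $\tilde{P}$ is Schwartz with $\int_G\tilde{P}=0$ and $k\ge N\ge 1$, Proposition~\ref{prop:fgconv} (applied with $f=\tilde{\Delta}$, $g=\tilde{P}$, absorbing the implicit constants) gives $|B(x)|\le C2^{-k}(1+\|x\|)^{-M}$ for every $M$; the same proposition applied with $f=\nabla_b\tilde{\Delta}$ and the identity $\nabla_b B=(\nabla_b\tilde{\Delta})*\tilde{P}_k$ gives the identical bound for $\nabla_b B$. Thus $B$ still decays at ``scale one'' but already carries the gain $2^{-k}$. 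Now $D=B*\Lambda_j$, and $\Lambda$ is Schwartz with $\int_G\Lambda=0$ (Proposition~\ref{prop:equivLP}) and $j\ge 0$, so a second application of Proposition~\ref{prop:fgconv}, with $f=B$ and $g=\Lambda$, produces $|D(x)|\le C2^{-j-k}(1+\|x\|)^{-M}$ for all $M$; taking $M=Q+2$ gives the first asserted inequality.

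The derivative bounds require one extra ingredient, because a coordinate derivative $\partial/\partial x_l$ does not commute with convolution. Using Proposition~\ref{prop:repR}(b) I would write $\partial/\partial x_l=\sum_\alpha p_\alpha(x)(\nabla_b^R)^\alpha$, a finite sum with $p_\alpha$ polynomials and each $(\nabla_b^R)^\alpha$ a product of right-invariant degree-one fields. Right-invariant derivatives do pass through a convolution, hitting the leftmost factor (Proposition~\ref{prop:deriv}), so $(\nabla_b^R)^\alpha D=\big((\nabla_b^R)^\alpha\tilde{\Delta}\big)*\tilde{P}_k*\Lambda_j$; since $(\nabla_b^R)^\alpha\tilde{\Delta}$ is again Schwartz, the two-step argument above applies verbatim and yields $|(\nabla_b^R)^\alpha D(x)|\le C2^{-j-k}(1+\|x\|)^{-M}$ for all $M$. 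Multiplying by $p_\alpha(x)$, which is dominated by a power of $(1+\|x\|)$, and summing over the finitely many $\alpha$ gives $|\partial_l D(x)|\le C 2^{-j-k}(1+\|x\|)^{-M}$ for all $M$; choosing $M$ large (recalling the remark after Proposition~\ref{prop:fg} that only a fixed finite amount of decay of the inputs is needed for a given target exponent) we obtain $C2^{-j-k}(1+\|x\|)^{-(Q+r+1)}$ when $n_{r-1}<l\le n_r$, as required.

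The proof is essentially bookkeeping; the only genuine care-point is the non-commutativity of coordinate derivatives with convolution, which forces the detour through $\nabla_b^R$ and the accompanying need to check that the gain $2^{-j-k}$ is not destroyed by the polynomial coefficients — which is arranged by taking the decay exponent $M$ large enough.
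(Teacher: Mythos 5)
Your plan matches the paper's: peel off $\tilde{P}_k$ and then $\Lambda_j$, gaining $2^{-k}$ and $2^{-j}$ via Proposition~\ref{prop:fgconv}, and handle coordinate derivatives by routing through right-invariant fields and Proposition~\ref{prop:repR}(b). The vanishing-integral and Schwartz claims are fine.

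However, there is a concrete error in the step that bounds $\nabla_b B$ for the second application of Proposition~\ref{prop:fgconv}. You write ``the identity $\nabla_b B=(\nabla_b\tilde{\Delta})*\tilde{P}_k$''. This identity is false on a non-abelian $G$: by Proposition~\ref{prop:deriv}, the \emph{left}-invariant derivative of a convolution lands on the \emph{right} factor, so $\nabla_b B=\tilde{\Delta}*(\nabla_b\tilde{P}_k)$. It is the \emph{right}-invariant derivative that hits the left factor, $\nabla_b^R B=(\nabla_b^R\tilde{\Delta})*\tilde{P}_k$ — and you invoke exactly this fact, correctly, two sentences later when you treat $\partial/\partial x_l$. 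The point is not cosmetic: $\nabla_b\tilde{P}_k=2^k(\nabla_b\tilde{P})_k$, so the literal computation $\nabla_b B=\tilde{\Delta}*(\nabla_b\tilde{P}_k)$ only gives $|\nabla_b B|\le C(1+\|x\|)^{-M}$ with \emph{no} factor $2^{-k}$. Since Proposition~\ref{prop:fgconv} applied to $B*\Lambda_j$ uses precisely a bound on $\nabla_b B$, without the $2^{-k}$ there you would conclude only $|D|\le C2^{-j}(1+\|x\|)^{-M}$, missing the required gain in $k$. The correct route, which is what the paper does, is: first establish $|(\nabla_b^R)^\alpha B|\le C2^{-k}(1+\|x\|)^{-M}$ for all $\alpha$ from $\nabla_b^R B=(\nabla_b^R\tilde{\Delta})*\tilde{P}_k$ and Proposition~\ref{prop:fgconv}; then use Proposition~\ref{prop:repR}(b) to pass to bounds on every $\partial B/\partial x_l$ (absorbing the polynomial coefficients by taking $M$ large); then observe that each left-invariant $X_i$ is a coordinate derivative plus polynomial multiples of other coordinate derivatives, so $|\nabla_b B|\le C2^{-k}(1+\|x\|)^{-M}$ follows; only then can you apply Proposition~\ref{prop:fgconv} to $B*\Lambda_j$. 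The same repair is needed inside your ``two-step argument applied verbatim'' to $(\nabla_b^R)^\alpha\tilde{\Delta}*\tilde{P}_k*\Lambda_j$, since the intermediate bound on $\nabla_b\bigl((\nabla_b^R)^\alpha\tilde{\Delta}*\tilde{P}_k\bigr)$ has the same issue. Once this detour is inserted at both places, your argument is correct and matches the paper.
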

Assume the proposition for the moment. Then one can apply the refinement of Littlewood-Paley theory in Proposition~\ref{prop:refinedLP} and conclude that
$$
\left\| \left(\sum_{r=-\infty}^{\infty} |D_r (\nabla_b f)|^2 \right)^{1/2} \right\|_{L^Q} \leq C 2^{-j-k} \|\nabla_b f\|_{L^Q},
$$
i.e.
$$
\left\| \left(\sum_{r=-\infty}^{\infty} |\Lambda_{r+j} \tilde{P}_{r+k} \tilde{\Delta}_r (\nabla_b f)|^2 \right)^{1/2} \right\|_{L^Q} \leq C 2^{-j-k} \|\nabla_b f\|_{L^Q}.
$$
Summing over $j \geq 0$ and $k \geq N$, we see that the sum over $j \geq 0$ in (\ref{eq:f_0est}) is bounded by $C 2^{-N} \|\nabla_b f\|_{L^Q}$ as well. This can be made $\leq \frac{\delta}{3} \|\nabla_b f\|_{L^Q}$, by picking $N$ to be a large constant depending only on $\delta$ (but not on $K$ nor $f$). Hence our desired estimate (\ref{eq:f01}) for $f_0$ follows.

\begin{proof}[Proof of Proposition \ref{prop:D}]
It is clear that $D$ is Schwartz and $\int_G D(x) dx = 0$. To prove the estimate for $|D(x)|$, first consider $\tilde{\Delta}*\tilde{P}_k$ with $k \geq N$. From Proposition \ref{prop:deriv}, we have $\nabla_b^R (\tilde{\Delta}*\tilde{P}_k) = (\nabla_b^R \tilde{\Delta})*\tilde{P}_k$, so by Proposition \ref{prop:fgconv}, we get
$$
|(\nabla_b^R)^{\alpha} (\tilde{\Delta}*\tilde{P}_k)(x)| \leq C_{\alpha,M} 2^{-k} (1+\|x\|)^{-M}
$$
for all multi-index $\alpha$ and all $M > 0$. It follows that
$$
|\tilde{\Delta}*\tilde{P}_k(x)| + \left|\frac{\partial}{\partial x_l} (\tilde{\Delta}*\tilde{P}_k)(x)\right| \leq C_{M,l} 2^{-k} (1+\|x\|)^{-M}
$$
for any $M > 0$ and all $1 \leq l \leq n$. Here we applied Proposition \ref{prop:repR} (b), which says that each $\frac{\partial}{\partial x_l}$ can be written as a linear combination of $(\nabla_b^R)^{\alpha}$ with coefficients that are polynomials in $x$. Applying Proposition \ref{prop:fgconv} again, we get
$$
|D(x)| \leq C_{M} 2^{-j-k} (1+\|x\|)^{-M}
$$
for all $M > 0$, since $j \geq 0$. Similarly
$$
|(\nabla_b^R)^{\alpha} D(x)| \leq C_{\alpha,M} 2^{-j-k} (1+\|x\|)^{-M}
$$
for all multi-index $\alpha$ and all $M > 0$. It follows that
$$
\left|\frac{\partial}{\partial x_l} D(x)\right| \leq C_{M} 2^{-j-k} (1+\|x\|)^{-M}
$$
for all $1 \leq l \leq n$ and all $M > 0$, from which our proposition follows.
\end{proof}

\section{Properties of $\omega_j$ and $\tomega_j$}

Suppose $\delta > 0$ is given, and let $N$ be chosen as in the previous section. Let $\sigma$ be a very large positive integer, to be chosen depending only on $N$ (and thus only on $\delta$). Suppose $f \in \dot{NL}^{1,Q}$ is given, and (\ref{eq:smallassump}) holds, i.e. $\|\nabla_b f\|_{L^Q} \leq c_G 2^{-NQ} 2^{-\sigma(Q-1)}$, where $c_G$ is a sufficiently small constant depending only on $G$. 

Define $\omega_j$ and $\tomega_j$ by (\ref{eq:omegajdef}) and (\ref{eq:tomegajdef}) as in Section~\ref{sect:outline}, namely
$$
\omega_j(x) := \left( \sum_{r \in \Lambda} \left[ S_{j+N}|\Delta_j f|(2^{-j} \cdot r) E(r^{-1} \cdot (2^j \cdot x))\right ]^Q \right)^{1/Q}
$$
and
$$
\tomega_j(x) := 2^{NQ}E_j S_{j+N} |\Delta_j f|(x)
$$

First, we want a pointwise bound for $\omega_j$. To obtain that we observe:

\begin{lemma} \label{prop:shift}
Let $S_j$ and $E_j$ be defined as in Section~\ref{sect:outline}. Then whenever $x, \theta \in G$ with $\|\theta\| \leq 2^{-j}$, we have $$S_j(x \cdot \theta) \simeq S_j(x) \quad \text{and} \quad E_j(\theta \cdot x) \simeq E_j(x).$$ In particular, we have $$S_jf(x \cdot \theta) \simeq S_jf(x)$$ if $f$ is a non-negative function and $\|\theta\| \leq 2^{-j}$.
\end{lemma}

\begin{proof}
First we observe that $$E(x) \simeq e^{-\|x_{\sigma}\|} \quad \text{for all $x \in G$}.$$ This is because $$(1+\|x_{\sigma}\|^{2m!})^{\frac{1}{2m!}}-\|x_{\sigma}\| \to 0$$ as $\|x_{\sigma}\| \to \infty$. Now by Proposition \ref{prop:xtheta}, we have $$\left| \, \|(\theta \cdot x)_{\sigma}\| - \|x_{\sigma}\| \, \right| \leq C \quad \text{if $\|\theta\| \leq 1$.}$$ Hence from $E(x) \simeq e^{-\|x_{\sigma}\|}$ and $E(\theta \cdot x) \simeq e^{-\|(\theta \cdot x)_{\sigma}\|}$, we get $$E(\theta \cdot x) \simeq E(x) \quad \text{if $\|\theta\| \leq 1$.}$$ Scaling yields the desired claim for $E_j$.

Next, suppose $\|\theta\| \leq 1$. We claim that $S(x \cdot \theta) \simeq S(x)$ for all $x \in G$. This holds because $S(x \cdot \theta) \simeq e^{-\|x \cdot \theta\|}$ and $S(x) \simeq e^{-\|x\|}$ for all $x$, and one can apply (\ref{eq:normmeanvalue}) to compare the latter.
Scaling yields the claim for $S_j$.
\end{proof}

Now comes the pointwise bound for $\omega_j$, from both above and below.

\begin{prop} \label{prop:omegaequiv}
$$\omega_j(x) \simeq \left(\sum_{r \in \Lambda} \left[S_{j+N}|\Delta_j f|(x \cdot 2^{-j}r) E(r^{-1})\right]^Q\right)^{1/Q}. $$
Here the implicit constant is independent of $N$ and $\sigma$.
\end{prop}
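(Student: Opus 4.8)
The plan is to reparametrize the discrete sum defining $\omega_j(x)$ so that it becomes a sum over a \emph{translate} of the lattice $\Lambda$, and then to observe that translating $\Lambda$ changes such a sum only by a constant factor. Write $g := S_{j+N}|\Delta_j f|$, a non-negative function, and put $y := 2^j\cdot x$, so that
$$\omega_j(x)^Q = \sum_{r\in\Lambda}\big[g(2^{-j}\cdot r)\,E(r^{-1}\cdot y)\big]^Q.$$
On the other hand, by (\ref{eq:dilauto}) one has $x\cdot(2^{-j}\cdot r) = 2^{-j}\cdot(y\cdot r)$, so the $Q$-th power of the right-hand side of the proposition equals $\sum_{r\in\Lambda}[g(2^{-j}\cdot(y\cdot r))\,E(r^{-1})]^Q$; substituting $\rho = y\cdot r$, which is a bijection of $\Lambda$ onto the left translate $y\cdot\Lambda$ under which $r^{-1} = \rho^{-1}\cdot y$, this becomes $\sum_{\rho\in y\cdot\Lambda}[g(2^{-j}\cdot\rho)\,E(\rho^{-1}\cdot y)]^Q$. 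Hence, setting $\Phi(\rho) := [g(2^{-j}\cdot\rho)\,E(\rho^{-1}\cdot y)]^Q \ge 0$, the proposition reduces to the two-sided estimate
$$\sum_{\rho\in\Lambda}\Phi(\rho)\ \simeq\ \sum_{\rho\in y\cdot\Lambda}\Phi(\rho),$$
with implicit constants independent of $y$, $j$, $N$, $\sigma$, $f$.

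The first step I would carry out is a local stability property: $\Phi(\rho\cdot\theta)\simeq\Phi(\rho)$ whenever $\|\theta\|\le C_0\,2^{-N}$, for any fixed $C_0$, with constant depending only on $C_0$ and $G$. For the first factor, $g(2^{-j}\cdot(\rho\cdot\theta)) = g\big((2^{-j}\cdot\rho)\cdot(2^{-j}\cdot\theta)\big)$ and $\|2^{-j}\cdot\theta\| = 2^{-j}\|\theta\|\lesssim 2^{-(j+N)}$, so Proposition~\ref{prop:shift}, applied to the non-negative function $|\Delta_j f|$, gives $g(2^{-j}\cdot(\rho\cdot\theta))\simeq g(2^{-j}\cdot\rho)$. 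For the second factor, $E\big((\rho\cdot\theta)^{-1}\cdot y\big) = E\big(\theta^{-1}\cdot(\rho^{-1}\cdot y)\big)$ with $\|\theta^{-1}\| = \|\theta\|\lesssim 1$, so Proposition~\ref{prop:shift} again gives $E((\rho\cdot\theta)^{-1}\cdot y)\simeq E(\rho^{-1}\cdot y)$. Here I would invoke the slightly stronger form of these comparisons visible in the proof of Proposition~\ref{prop:shift}: both rest on the linear-in-$\|\theta\|$ mean-value bounds (\ref{eq:normmeanvalue}) and Proposition~\ref{prop:xtheta}, so they persist for perturbations of any fixed bounded size, and — crucially for the $E$-factor — the resulting constant is independent of $\sigma$ because the bound in Proposition~\ref{prop:xtheta} is. Multiplying the two comparisons and taking $Q$-th powers yields the claimed stability of $\Phi$.

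The second step is a routine covering argument. Both $\Lambda$ and the translate $y\cdot\Lambda$ are $c\,2^{-N}$-separated and $C\,2^{-N}$-dense with respect to the left-invariant ``distance'' $d(u,v) := \|u^{-1}\cdot v\|$: for $\Lambda = \{2^{-N}\cdot s : s\in\mathbb{Z}^n\}$ this is the standard fact that it is a lattice of scale $2^{-N}$ (uniformly in $N$, via the dilation structure and the quasi-triangle inequality), and $y\cdot\Lambda$ inherits both properties since $d$ is left-invariant. I would then choose a map $\pi\colon y\cdot\Lambda\to\Lambda$ with $d(\pi(\rho),\rho)\le C\,2^{-N}$; the fibers $\pi^{-1}(\rho')$ are uniformly bounded in cardinality, since they lie in a $d$-ball of radius $C\,2^{-N}$ about $\rho'$ while $y\cdot\Lambda$ is $c\,2^{-N}$-separated. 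Writing $\rho = \pi(\rho)\cdot(\pi(\rho)^{-1}\cdot\rho)$ with $\|\pi(\rho)^{-1}\cdot\rho\|\le C\,2^{-N}$ and invoking the local stability,
$$\sum_{\rho\in y\cdot\Lambda}\Phi(\rho)\ \simeq\ \sum_{\rho\in y\cdot\Lambda}\Phi(\pi(\rho))\ =\ \sum_{\rho'\in\Lambda}\#\pi^{-1}(\rho')\,\Phi(\rho')\ \lesssim\ \sum_{\rho'\in\Lambda}\Phi(\rho'),$$
and the reverse inequality follows by swapping the roles of $\Lambda$ and $y\cdot\Lambda$. All constants depend only on $G$, so in particular they are independent of $N$ and $\sigma$, which is the uniformity asserted. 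I expect the one genuinely delicate point to be the $\sigma$-independence of the $E$-factor comparison in the local stability step; but this is precisely what Proposition~\ref{prop:xtheta} was set up to provide, so no real difficulty arises, and the rest is bookkeeping with dilations and the group law.
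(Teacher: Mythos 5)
Your proposal is correct and follows essentially the same strategy as the paper: both begin with the same change of variables reducing the claim to comparing the sum of $\Phi$ over $\Lambda$ against the sum over a group translate of $\Lambda$, and both then rely on exactly the local stability $\Phi(\rho\cdot\theta)\simeq\Phi(\rho)$ for $\|\theta\|\lesssim 2^{-N}$, obtained from Proposition~\ref{prop:shift} for the $g$-factor and the $E$-factor separately (with $\sigma$-uniformity coming from Proposition~\ref{prop:xtheta}, as you note). The only real difference is in how you pass between the two lattices: the paper exploits the exact tiling $G=\bigsqcup_{r\in\Lambda} r\cdot\{2^{-N}\cdot\theta:\theta\in[0,1)^n\}$, which yields a canonical bijection $s\mapsto r$ with $\|r^{-1}\cdot s\|\lesssim 2^{-N}$, whereas you use a separation/density covering argument producing a bounded-to-one map $\pi$. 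The two devices are interchangeable here, but be aware that the uniform $c\,2^{-N}$-separation and $C\,2^{-N}$-density of $\Lambda$ in the left-invariant quasi-metric, which you call ``the standard fact,'' is not an immediate consequence of just ``the dilation structure and the quasi-triangle inequality'': the quasi-triangle inequality does not give separation. The cleanest proof of both properties proceeds via the same triangular (coordinate-by-coordinate) analysis of the group law that establishes the tiling used in the paper; e.g.\ if $s,s'\in\mathbb{Z}^n$ agree on layers $1,\dots,l$ then the $(l+1)$-st layer of $s^{-1}\cdot s'$ is exactly $s'-s$ there, which gives $\|s^{-1}\cdot s'\|\geq 1$ for distinct integer points, and the density similarly follows from the tiling. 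So while your route is formally a bit more flexible (it never needs uniqueness of the tiling decomposition, only separation, density, and doubling), it is not more elementary, and in filling in the ``standard fact'' you would end up re-proving essentially what the paper uses directly.
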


\begin{proof}
Recall that by (\ref{eq:omegajdef}),
\begin{align*}
\omega_j(x) &:= \left( \sum_{r \in \Lambda} \left[ S_{j+N}|\Delta_j f|(2^{-j} \cdot r) E(r^{-1} \cdot (2^j \cdot x))\right ]^Q \right)^{1/Q} \\
& = \left( \sum_{s \in (2^j \cdot x)^{-1} \cdot \Lambda} \left[ S_{j+N}|\Delta_j f|(x \cdot (2^{-j} \cdot s)) E(s^{-1})\right ]^Q \right)^{1/Q}
\end{align*}
The last identity follows from a change of variable: if $s = (2^j \cdot x)^{-1} \cdot r$, then we have $r = (2^j \cdot x) \cdot s$, so $2^{-j} \cdot r = x \cdot (2^{-j} \cdot s)$, the last identity following because dilations are group homomorphisms (c.f. (\ref{eq:dilauto})). Now recall that $\Lambda$ is the lattice $\{2^{-N} \cdot s \colon s \in \mathbb{Z}^n\}$. Hence every $s \in (2^j \cdot x)^{-1} \cdot \Lambda$ can be written uniquely as $r \cdot (2^{-N} \cdot \theta)$ for some $r \in \Lambda$ and $\theta \in G$, such that if $\theta = [\theta_1, \dots, \theta_n]$, then all $\theta_k \in [0,1)$. This defines a map from the shifted lattice $(2^j \cdot x)^{-1} \cdot \Lambda$ to the original lattice $\Lambda$, and it is easy to see that this map is a bijection. Hence if the inverse of this map is denoted by $s = s(r)$, then
$$
\omega_j(x) = \left( \sum_{r \in \Lambda} \left[ S_{j+N}|\Delta_j f|(x \cdot (2^{-j} \cdot s(r))) E(s(r)^{-1})\right ]^Q \right)^{1/Q}.
$$
But $s(r) = r \cdot (2^{-N} \cdot \theta)$ for some $\|\theta\| \leq 1$. Thus by Lemma \ref{prop:shift}, we get
$$
E(s(r)^{-1}) \simeq E(r^{-1}).
$$
Also, from the same relation between $s(r)$ and $r$, we have $2^{-j} \cdot s(r) = (2^{-j} \cdot r) \cdot (2^{-(j+N)} \cdot \theta)$ with $\|2^{-(j+N)} \cdot \theta\| \leq 2^{-(j+N)}$. Thus by Lemma \ref{prop:shift} again, we get
$$
S_{j+N}|\Delta_j f|(x \cdot (2^{-j} \cdot s(r))) \simeq S_{j+N} |\Delta_j f|(x \cdot (2^{-j} \cdot r)).
$$
Hence the proposition follows.
\end{proof}

By a similar token, one can prove that
\begin{prop}
$$
\tomega_j(x) \simeq \sum_{r \in \Lambda} S_{j+N}|\Delta_j f|(x \cdot (2^{-j} \cdot r)) E(r^{-1}),
$$
with implicit constants independent of $N$ and $\sigma$.
\end{prop}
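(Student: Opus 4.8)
The plan is to realise $\tomega_j$ as an honest integral, recognise the claimed sum as its Riemann sum over the lattice $\Lambda$, and show the two are comparable with constants independent of $N$ and $\sigma$. This is the same philosophy as in the proof of Proposition~\ref{prop:omegaequiv}; the only new feature is the passage from a convolution to a discrete sum, which is why the $l^Q$ power there is replaced by an ordinary sum here.

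First I would unwind the definition. Writing $\tomega_j = 2^{NQ}(S_{j+N}|\Delta_j f|)*E_j$ as an integral and substituting $z = 2^{j}\cdot y$ and then $w = z^{-1}$, and using that the dilations are automorphisms commuting with inversion (so $(2^{-j}\cdot z)^{-1} = 2^{-j}\cdot z^{-1}$), one obtains
\[
\tomega_j(x) = 2^{NQ}\int_G S_{j+N}|\Delta_j f|\bigl(x\cdot(2^{-j}\cdot w)\bigr)\,E(w^{-1})\,dw .
\]
Next I would partition $G$ into the cells $\tilde C_r := \{\, r\cdot(2^{-N}\cdot\theta) : \theta\in[0,1)^n \,\}$, $r\in\Lambda$. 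Because the group law is lower-triangular in the coordinates adapted to the grading, the map $(r,\theta)\mapsto r\cdot(2^{-N}\cdot\theta)$ is a bijection of $\Lambda\times[0,1)^n$ onto $G$ (one solves for $r$ and $\theta$ layer by layer, exactly as in the proof of Proposition~\ref{prop:omegaequiv}), and at fixed $r$ it has constant Jacobian $2^{-NQ}$, since Lebesgue measure is a left-invariant Haar measure and $\theta\mapsto 2^{-N}\cdot\theta$ scales it by $2^{-NQ}$. Hence $|\tilde C_r| = 2^{-NQ}$ and the integral above equals $2^{NQ}\sum_{r\in\Lambda}\int_{\tilde C_r}(\cdots)\,dw$.

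The heart of the matter is the pointwise comparison: for $w = r\cdot(2^{-N}\cdot\theta)\in\tilde C_r$ with $\theta\in[0,1)^n$, I claim
\[
S_{j+N}|\Delta_j f|\bigl(x\cdot(2^{-j}\cdot w)\bigr)\,E(w^{-1}) \;\simeq\; S_{j+N}|\Delta_j f|\bigl(x\cdot(2^{-j}\cdot r)\bigr)\,E(r^{-1})
\]
with absolute implicit constants. For the first factor, $x\cdot(2^{-j}\cdot w) = \bigl(x\cdot(2^{-j}\cdot r)\bigr)\cdot\eta$ with $\eta := 2^{-(j+N)}\cdot\theta$ and $\|\eta\| = 2^{-(j+N)}\|\theta\|\le C_0\,2^{-(j+N)}$, where $C_0 := \sup_{\theta\in[0,1)^n}\|\theta\|$ depends only on $G$; since $|\Delta_j f|$ is non-negative, the last part of Proposition~\ref{prop:shift} — whose proof uses only (\ref{eq:normmeanvalue}) and so tolerates the harmless extra factor $C_0$ — gives this comparison. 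For the second factor, $w^{-1} = (2^{-N}\cdot\theta^{-1})\cdot r^{-1}$ with $\|2^{-N}\cdot\theta^{-1}\| = 2^{-N}\|\theta\|\le C_0$; using that $E(y)\simeq e^{-\|y_\sigma\|}$ with constants independent of $\sigma$ (because $0\le(1+t^{2m!})^{1/2m!}-t\le 1$ for $t\ge 0$), together with Proposition~\ref{prop:xtheta}, whose constant is independent of $\sigma$, one gets $\bigl|\,\|(w^{-1})_\sigma\| - \|(r^{-1})_\sigma\|\,\bigr|\le C C_0$, hence $E(w^{-1})\simeq E(r^{-1})$. Integrating the pointwise comparison over $\tilde C_r$, using $|\tilde C_r| = 2^{-NQ}$, and summing over $r$ yields
\[
\tomega_j(x) \;\simeq\; 2^{NQ}\sum_{r\in\Lambda} 2^{-NQ}\, S_{j+N}|\Delta_j f|\bigl(x\cdot(2^{-j}\cdot r)\bigr)\,E(r^{-1}) \;=\; \sum_{r\in\Lambda} S_{j+N}|\Delta_j f|\bigl(x\cdot(2^{-j}\cdot r)\bigr)\,E(r^{-1}),
\]
as desired.

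The only delicate point, and the one I would watch, is that every comparison constant stays independent of $N$ and $\sigma$ (and of $j$ and $x$): the $2^{NQ}$ prefactor is cancelled exactly by the cell volume $2^{-NQ}$, while the two within-cell comparisons are done "at unit scale'' (bounded homogeneous norm, respectively scale $2^{-(j+N)}$) using (\ref{eq:normmeanvalue}) and, crucially, Proposition~\ref{prop:xtheta}, both of which have constants depending only on $G$. This is precisely the reason $E$ was built out of the $\sigma$-twisted homogeneous norm $\|\cdot_\sigma\|$.
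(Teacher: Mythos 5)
Your proof is correct and follows essentially the same route as the paper's: write $\tomega_j$ as the integral $2^{NQ}\int_G S_{j+N}|\Delta_j f|(x\cdot(2^{-j}\cdot y))E(y^{-1})\,dy$, decompose $G$ into the lattice cells $r\cdot(2^{-N}\cdot[0,1)^n)$ each of Haar measure $2^{-NQ}$, and make a within-cell pointwise comparison of both factors against their values at the lattice point $r$ using Proposition~\ref{prop:shift} (itself built on (\ref{eq:normmeanvalue}) and Proposition~\ref{prop:xtheta}). You are in fact slightly more careful than the paper at one point: you correctly observe that $\sup_{\theta\in[0,1)^n}\|\theta\|$ is a constant $C_0$ depending only on $G$ that need not be $\le 1$, whereas the paper informally invokes the case $\|\theta\|\le 1$; this is harmless since the comparison constants in Propositions~\ref{prop:shift} and~\ref{prop:xtheta} scale linearly in $\|\theta\|$, exactly as you note.
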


\begin{proof}
This is because by (\ref{eq:tomegajdef}),
\begin{align*}
\tomega_j(x) &= 2^{NQ} \int_G S_{j+N}|\Delta_j f|(x \cdot (2^{-j} \cdot y)) E(y^{-1}) dy \\
&= \sum_{r \in \Lambda} \int_{[0,1)^n} S_{j+N}|\Delta_j f|(x \cdot (2^{-j} \cdot r) \cdot(2^{-(j+N)} \cdot \theta)) E((2^{-N} \cdot \theta)^{-1} \cdot r^{-1}) d\theta\\
\end{align*}
The second equality follows from the fact that every $y \in G$ can be written uniquely as $r \cdot (2^{-N} \cdot \theta)$ for some $r \in \Lambda$ and $\theta \in [0,1)^n$, which we have already used in the proof of Proposition~\ref{prop:omegaequiv}. Note again that if $y = r \cdot (2^{-N} \cdot \theta)$, then by the fact that dilations are group homomorphisms, we have $2^{-j} \cdot y = 2^{-j} \cdot (r \cdot (2^{-N} \cdot \theta)) = (2^{-j} \cdot r) \cdot (2^{-(j+N)} \cdot \theta)$. Also, we used $dy = 2^{-NQ} d\theta$ in the change of variables.
Now one can mimic the proof of Proposition~\ref{prop:omegaequiv}. In fact, one observes that whenever $\|\theta\| \leq 1$, one has
$$
E((2^{-N} \cdot \theta)^{-1} \cdot r^{-1}) \simeq E(r^{-1})
$$
and
$$
S_{j+N}|\Delta_j f|(x \cdot (2^{-j} \cdot r) \cdot(2^{-(j+N)} \cdot \theta)) \simeq S_{j+N} |\Delta_j f|(x \cdot (2^{-j} \cdot r)).
$$
One then concludes that
$$\tomega_j(x) \simeq \sum_{r \in \Lambda} S_{j+N}|\Delta_j f|(x \cdot (2^{-j} \cdot r)) E(r^{-1}).$$
This completes the proof.
\end{proof}

From the two propositions above, it follows that

\begin{prop} \label{prop:omegapointwise}
$$S_{j+N}|\Delta_j f|(x) \leq C \omega_j(x) \leq C \tomega_j(x).$$
\end{prop}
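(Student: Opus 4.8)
The plan is to reduce everything to the two preceding propositions, which already rewrite $\omega_j$ and $\tomega_j$ in terms of the \emph{same} ingredients. Set, for fixed $x$ and $j$,
$$
b_r := S_{j+N}|\Delta_j f|(x \cdot (2^{-j} \cdot r))\, E(r^{-1}), \qquad r \in \Lambda,
$$
so that $b_r \geq 0$. By Proposition~\ref{prop:omegaequiv} we have $\omega_j(x) \simeq \left(\sum_{r \in \Lambda} b_r^Q\right)^{1/Q} = \|(b_r)_{r \in \Lambda}\|_{\ell^Q}$, and by the proposition immediately following it we have $\tomega_j(x) \simeq \sum_{r \in \Lambda} b_r = \|(b_r)_{r \in \Lambda}\|_{\ell^1}$, with implicit constants independent of $N$ and $\sigma$. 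So the statement becomes the pair of elementary inequalities $b_0 \lesssim \|(b_r)\|_{\ell^Q} \lesssim \|(b_r)\|_{\ell^1}$ for the particular term $b_0$.

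For the first inequality $S_{j+N}|\Delta_j f|(x) \leq C\omega_j(x)$, I would observe that $0 \in \Lambda$ (it is $2^{-N}\cdot s$ with $s = 0 \in \mathbb{Z}^n$), that $x \cdot (2^{-j} \cdot 0) = x$, and that $E(0^{-1}) = E(0) = e^{-1}$ is a fixed positive constant. Hence the sum defining $\|(b_r)\|_{\ell^Q}$ contains the term $b_0 = e^{-1} S_{j+N}|\Delta_j f|(x)$, and since the $\ell^Q$ norm dominates any single coordinate, $\|(b_r)\|_{\ell^Q} \geq e^{-1} S_{j+N}|\Delta_j f|(x)$. Combining with $\omega_j(x) \simeq \|(b_r)\|_{\ell^Q}$ gives $S_{j+N}|\Delta_j f|(x) \leq C \omega_j(x)$.

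For the second inequality $\omega_j(x) \leq C\tomega_j(x)$, I would invoke the nesting of $\ell^p$ norms on the non-negative sequence $(b_r)$: since $Q \geq 1$,
$$
\|(b_r)\|_{\ell^Q} = \left(\sum_{r \in \Lambda} b_r^Q\right)^{1/Q} \leq \sum_{r \in \Lambda} b_r = \|(b_r)\|_{\ell^1}.
$$
Chaining $\omega_j(x) \simeq \|(b_r)\|_{\ell^Q} \leq \|(b_r)\|_{\ell^1} \simeq \tomega_j(x)$ then yields $\omega_j(x) \leq C \tomega_j(x)$, and concatenating the two steps proves the proposition.

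There is no real obstacle here: the entire analytic content (the change of variables on the shifted lattice, the comparability of $E$ and $S_{j+N}$ under small translations) has already been absorbed into Propositions~\ref{prop:omegaequiv} and its successor. The only point requiring a moment's care is bookkeeping of constants — one must note that the implicit constants in those two propositions, and the constants $E(0) = e^{-1}$ and the $\ell^Q \hookrightarrow \ell^1$ embedding constant ($=1$), are all independent of $f$, $j$, $N$ and $\sigma$, so that the resulting $C$ in the statement is a genuine absolute constant. This independence is exactly what makes the proposition usable later in the smallness-dependent estimates of Section~\ref{sect:outline}.
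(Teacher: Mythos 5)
Your proof is correct and is essentially the same as the paper's: both reduce to Proposition~\ref{prop:omegaequiv} and its companion, take the $r=0$ term for the first inequality, and use $\ell^Q \hookrightarrow \ell^1$ for the second. You are a bit more careful than the paper's one-line argument in that you note $E(0)=e^{-1}$ rather than $1$ (the paper loosely says the $r=0$ term is ``precisely'' $S_{j+N}|\Delta_j f|(x)$), but this only affects the harmless absolute constant.
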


\begin{proof}
The first inequality holds because the term corresponding to $r = 0$ in the right hand side of the equation in Proposition~\ref{prop:omegaequiv} is precisely $S_{j+N}|\Delta_j f|(x)$. The second inequality holds by the previous two propositions, since the $l^Q$ norm of a sequence is always smaller than or equal to its $l^1$ norm.
\end{proof}

Next we have

\begin{prop} \label{prop:omegauniform}
$$\|\omega_j\|_{L^{\infty}} \leq C \|\tomega_j\|_{L^{\infty}} \leq C 2^{NQ} 2^{\sigma(Q-1)} \|\nabla_b f\|_{L^Q} \leq 1$$
if $c_G$ in assumption (\ref{eq:smallassump}) is chosen sufficiently small.
\end{prop}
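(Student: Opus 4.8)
The statement packages three inequalities, which I would prove in turn. The first, $\|\omega_j\|_{L^\infty}\le C\|\tomega_j\|_{L^\infty}$, is immediate: Proposition~\ref{prop:omegapointwise} gives $\omega_j(x)\le C\tomega_j(x)$ pointwise, so one just takes the supremum over $x\in G$. The third, $C\,2^{NQ}2^{\sigma(Q-1)}\|\nabla_b f\|_{L^Q}\le 1$, is merely a restatement of the smallness hypothesis~(\ref{eq:smallassump}): that hypothesis says $\|\nabla_b f\|_{L^Q}\le c_G\,2^{-NQ}2^{-\sigma(Q-1)}$, so the left side is at most $Cc_G$, and it suffices to fix $c_G\le 1/C$ at the very end, where $C$ is the absolute constant produced in the middle step.

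So all the content is in the middle inequality $\|\tomega_j\|_{L^\infty}\le C\,2^{NQ}2^{\sigma(Q-1)}\|\nabla_b f\|_{L^Q}$. I would argue directly from the definition $\tomega_j=2^{NQ}E_jS_{j+N}|\Delta_j f|=2^{NQ}\,\bigl(S_{j+N}|\Delta_j f|\bigr)*E_j$. By Young's inequality $\|u*v\|_{L^\infty}\le\|u\|_{L^\infty}\|v\|_{L^1}$,
$$\|\tomega_j\|_{L^\infty}\le 2^{NQ}\,\bigl\|S_{j+N}|\Delta_j f|\bigr\|_{L^\infty}\,\|E_j\|_{L^1}.$$
Since $S\ge 0$ and $\int_G S=1$, the operator $S_{j+N}$ is an $L^\infty$ contraction, so $\bigl\|S_{j+N}|\Delta_j f|\bigr\|_{L^\infty}\le\|\Delta_j f\|_{L^\infty}\le C\|\nabla_b f\|_{L^Q}$ by the Bernstein-type bound of Proposition~\ref{prop:Bern}, with $C$ independent of $j$. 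Also $\|E_j\|_{L^1}=\|E\|_{L^1}$ since $E_j(y)=2^{jQ}E(2^j\cdot y)$ is the $L^1$-normalized dilation of $E$. Thus everything reduces to showing $\|E\|_{L^1}\lesssim 2^{\sigma(Q-1)}$ with a constant depending only on $G$.

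For this I would compute $\|E\|_{L^1}$ via the change of variables $u=x_\sigma$. Recall $x_\sigma=2^{-\sigma}\cdot x^\sigma$ with $x^\sigma=[2^\sigma x_1,x_2,\dots,x_n]$; reading this off in coordinates, the first coordinate of $x_\sigma$ is $x_1$, the remaining coordinates $x_2,\dots,x_{n_1}$ of $V_1$ are multiplied by $2^{-\sigma}$, and for each $j\ge 2$ the $n_j$ coordinates of $V_j$ are multiplied by $2^{-j\sigma}$. Hence the linear map $x\mapsto x_\sigma$ has Jacobian $2^{-\sigma[(n_1-1)+2n_2+\cdots+mn_m]}=2^{-\sigma(Q-1)}$, using $Q=\sum_j jn_j$. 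Since $E(x)\simeq e^{-\|x_\sigma\|}$ (as established in the proof of Proposition~\ref{prop:shift}),
$$\|E\|_{L^1}\simeq\int_G e^{-\|x_\sigma\|}\,dx=2^{\sigma(Q-1)}\int_G e^{-\|u\|}\,du,$$
and $\int_G e^{-\|u\|}\,du$ is a finite constant depending only on $G$ because homogeneous balls have polynomial volume growth. Plugging this back in yields $\|\tomega_j\|_{L^\infty}\le C\,2^{NQ}2^{\sigma(Q-1)}\|\nabla_b f\|_{L^Q}$ with $C$ depending only on $G$, and then choosing $c_G$ small closes the chain.

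There is no genuine obstacle here; the step needing the most care is the Jacobian count for the twisted dilation $x\mapsto x_\sigma$. The point is that $x\mapsto x_\sigma$ agrees with the honest dilation $2^{-\sigma}\cdot$ except that it leaves the single coordinate $x_1$ fixed, so it scales the Haar measure by $2^{-\sigma Q}\cdot 2^{\sigma}=2^{-\sigma(Q-1)}$ rather than $2^{-\sigma Q}$; this extra factor of $2^{\sigma}$ is precisely what makes the power of $2^{\sigma}$ in the final bound match the one in the smallness hypothesis~(\ref{eq:smallassump}), so that the chain of inequalities can indeed be closed with a suitable choice of $c_G$.
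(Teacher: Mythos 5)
Your proposal is correct and follows essentially the same route as the paper's proof: the first inequality from Proposition~\ref{prop:omegapointwise}, the middle from Young's inequality applied to $\tomega_j = 2^{NQ}(S_{j+N}|\Delta_j f|)*E_j$ together with Bernstein (Proposition~\ref{prop:Bern}) and the bound $\|E\|_{L^1}\lesssim 2^{\sigma(Q-1)}$, and the last from the smallness hypothesis~(\ref{eq:smallassump}) by choosing $c_G$ small. Your Jacobian computation for the twisted dilation $x\mapsto x_\sigma$ fills in a detail the paper leaves implicit, and the determinant count $2^{-\sigma[(n_1-1)+2n_2+\cdots+mn_m]}=2^{-\sigma(Q-1)}$ is right.
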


We fix this choice of $c_G$ from now on.

\begin{proof}
The first inequality follows from the previous proposition. The second inequality follows from
$$\|\tomega_j\|_{L^{\infty}} \leq 2^{NQ} \|E\|_{L^1} \|S\|_{L^1} \|\Delta_j f\|_{L^{\infty}},$$
$$\|E\|_{L^1} = C 2^{\sigma(Q-1)},$$
and Bernstein's inequality as in Proposition~\ref{prop:Bern}.
The last inequality holds since $\|\nabla_b f\|_{L^Q} \leq c_G 2^{-NQ} 2^{-\sigma(Q-1)}$ and $c_G$ is sufficiently small.
\end{proof}

\begin{prop} \label{prop:omegaderiv}
$$|X_1 \omega_j| \leq C 2^j \omega_j$$ and
$$|X_k \omega_j| \leq C 2^{j-\sigma} \omega_j \quad \text{for $k = 2, \dots, n_1$}.$$
\end{prop}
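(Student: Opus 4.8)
The plan is to differentiate the defining formula (\ref{eq:omegajdef}) directly. For fixed $j$, all the $x$-dependence there lies in the Schwartz factors $\phi_r(x):=E\bigl(r^{-1}\cdot(2^j\cdot x)\bigr)$, $r\in\Lambda$. Since $X_k$ ($1\le k\le n_1$) is left-invariant it commutes with the left translation $y\mapsto r^{-1}\cdot y$, and since $X_k\in V_1$ it is homogeneous of degree $1$, so that $X_k\bigl(\psi(2^j\cdot x)\bigr)=2^j(X_k\psi)(2^j\cdot x)$. Applying these to $\psi(y):=E(r^{-1}\cdot y)$ gives
\[
X_k\phi_r(x)=2^j\,(X_kE)\bigl(r^{-1}\cdot(2^j\cdot x)\bigr),\qquad 1\le k\le n_1 .
\]
Thus I would reduce the proposition to the pointwise estimates
\[
|X_1E(x)|\le C\,E(x),\qquad |X_kE(x)|\le C\,2^{-\sigma}E(x)\quad(k=2,\dots,n_1),
\]
holding uniformly in $x$ and $\sigma$. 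Granting these, put $c_r(x):=S_{j+N}|\Delta_j f|(2^{-j}\cdot r)\,\phi_r(x)\ge 0$, so $\omega_j^{\,Q}=\sum_{r\in\Lambda}c_r^{\,Q}$ and each summand satisfies $|X_kc_r|\le C\varepsilon_k2^{j}c_r$, with $\varepsilon_1=1$ and $\varepsilon_k=2^{-\sigma}$ for $k\ge2$. Differentiating the series term by term (legitimate by the Schwartz decay of $E$ in the variable $r$) yields $\omega_j^{\,Q-1}X_k\omega_j=\sum_rc_r^{\,Q-1}X_kc_r$, whence $|X_k\omega_j|\le C\varepsilon_k2^{j}\omega_j$ upon dividing by $\omega_j^{\,Q-1}$ (the inequality being trivial at points where $\omega_j$ vanishes, which occurs only when $\Delta_j f\equiv0$).

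It then remains to prove the pointwise bounds for $X_kE$, which is the heart of the matter. Write $E=e^{-\rho}$ with $\rho(x)=(1+\|x_\sigma\|^{2m!})^{\frac{1}{2m!}}$, so $X_kE=-(X_k\rho)E$ and it is enough to show $|X_1\rho|\le C$ and $|X_k\rho|\le C2^{-\sigma}$ for $k\ge2$. Unwinding the two dilations defining $x_\sigma=2^{-\sigma}\cdot x^\sigma$ one checks the explicit identity
\[
\|x_\sigma\|^{2m!}=|x_1|^{2m!}+2^{-2m!\sigma}\bigl(\|x\|^{2m!}-|x_1|^{2m!}\bigr),
\]
from which $|x_1|\le\|x_\sigma\|$ and $2^{-\sigma}\|x\|\le\|x_\sigma\|\le\|x\|$. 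In particular $\partial_{x_1}\bigl(\|x_\sigma\|^{2m!}\bigr)=\partial_{x_1}|x_1|^{2m!}$, while $\partial_{x_l}\bigl(\|x_\sigma\|^{2m!}\bigr)=2^{-2m!\sigma}\,\partial_{x_l}\|x\|^{2m!}$ for $l\ge2$. Now I would insert the coordinate form $X_k=\partial_{x_k}+\sum_{p\ge2}\sum_{n_{p-1}<k'\le n_p}P_{k,k'}(x)\,\partial_{x_{k'}}$ with $P_{k,k'}$ homogeneous of degree $p-1$, multiply $X_k(\|x_\sigma\|^{2m!})$ by the factor $\frac{1}{2m!}(1+\|x_\sigma\|^{2m!})^{\frac{1}{2m!}-1}\le1$, and estimate each resulting term using $|P_{k,k'}(x)|\lesssim\|x\|^{p-1}$, $\bigl|\partial_{x_{k'}}\|x\|^{2m!}\bigr|\lesssim\|x\|^{2m!-p}$, and $\|x\|\le2^\sigma\|x_\sigma\|$. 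One finds that every term is bounded by
\[
C\,(1+\|x_\sigma\|^{2m!})^{\frac{1}{2m!}-1}\|x_\sigma\|^{2m!-1}
\]
times $1$ for the $\partial_{x_1}$-term when $k=1$, and times $2^{-\sigma}$ for every other term (in particular for the $\partial_{x_k}$-term when $k\ge2$). A one-line case split ($\|x_\sigma\|\le1$ versus $\|x_\sigma\|\ge1$) gives $(1+\|x_\sigma\|^{2m!})^{\frac{1}{2m!}-1}\|x_\sigma\|^{2m!-1}\le C$, and the bounds on $X_k\rho$, hence on $X_kE$, follow.

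The main obstacle will be precisely this comparison $|X_kE|\lesssim\varepsilon_kE$: one has to keep careful track of how the single factor $2^\sigma$ put into the first coordinate by $x\mapsto x^\sigma$ is redistributed by the grading and by the polynomial coefficients of the left-invariant vector fields, and check that the ``bad'' direction $X_1$ receives no gain while every other direction picks up the full factor $2^{-\sigma}$. Once that is in hand, the passage to $\omega_j$ is a formal term-by-term differentiation of an $\ell^Q$-type sum.
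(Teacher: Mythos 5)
Your proof is correct and follows essentially the same route as the paper's: differentiate the defining $\ell^Q$-sum (\ref{eq:omegajdef}) directly, note that the $x$-dependence sits in the factor $E(r^{-1}\cdot(2^j\cdot x))$ so that left-invariance and degree-$1$ homogeneity of $X_k$ reduce matters to the pointwise bounds $|X_1E|\le CE$ and $|X_kE|\le C2^{-\sigma}E$ for $k\ge2$, and then pass to $\omega_j$ by the chain rule on the $\ell^Q$ norm. The only difference is that the paper simply asserts the estimates $|X_1(1+\|x_\sigma\|^{2m!})^{1/(2m!)}|\le C$ and $|X_k(1+\|x_\sigma\|^{2m!})^{1/(2m!)}|\le C2^{-\sigma}$ without proof, whereas you supply the (correct) verification via the identity $\|x_\sigma\|^{2m!}=|x_1|^{2m!}+2^{-2m!\sigma}(\|x\|^{2m!}-|x_1|^{2m!})$, the coordinate expansion of $X_k$, and the homogeneity bounds on $P_{k,k'}$ and $\partial_{x_{k'}}\|x\|^{2m!}$.
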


\begin{proof}
One just needs to recall the definition of $\omega_j$ from (\ref{eq:omegajdef}), namely
$$
\omega_j(x) = \left( \sum_{r \in \Lambda} \left[ S_{j+N}|\Delta_j f|(2^{-j} \cdot r) E(r^{-1} \cdot (2^j \cdot x))\right ]^Q \right)^{1/Q}
$$
and to differentiate it. Here it is crucial that the variable $x$ is in the argument of $E$ and not in $S_{j+N}|\Delta_jf|$; in other words, we could not have taken the expression in Proposition \ref{prop:omegaequiv} to be the definition of $\omega_j$, because while it is true that the continuous convolution $f\ast g$ can be written as $\int_G f(y^{-1})g(y\cdot x)dy$ or $\int_G f(x\cdot y^{-1})g(y)dy$ via integration by parts, the analogous statement fails for discrete convolutions. Hence if $\omega_j$ was defined by the expression in Proposition~\ref{prop:omegaequiv}, then there would be no way of integrating by parts and letting the derivatives fall on $E$ here.

More precisely, first we observe
$$
|X_k (1+\|x_{\sigma}\|^{2m!})^{\frac{1}{2m!}}|  \leq C 2^{-\sigma} \quad \text{if $k = 2, \dots, n_1$}
$$
and
$$
|X_1 (1+\|x_{\sigma}\|^{2m!})^{\frac{1}{2m!}}|  \leq C.
$$
Thus
$$
|X_1 E(x)| \leq C E(x), \quad \text{and} \quad |X_k E(x)| \leq C 2^{-\sigma} E(x) \quad \text{if $k = 2, \dots, n_1$}.
$$
Now since we are using left-invariant vector fields, they commute with left-translations. It follows that
$$
X_k (E(r^{-1} \cdot (2^j \cdot x))) = 2^j (X_k E)(r^{-1} \cdot (2^j \cdot x))
$$
for all $k = 1, \dots, n_1$, and using the above estimates for $X_k E$, one easily obtains the desired inequalities.
\end{proof}

\begin{prop} \label{prop:Dtomegaj}
$$|X_1 \tomega_j| \leq C 2^j \tomega_j$$ and
$$|X_k \tomega_j| \leq C 2^{j-\sigma} \tomega_j \quad \text{for $k = 2, \dots, n_1$}.$$
\end{prop}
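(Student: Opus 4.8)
\emph{Proof proposal.} The plan is to exploit the fact that, unlike $\omega_j$, the function $\tomega_j$ is built from an \emph{honest} convolution: by (\ref{eq:tomegajdef}) we have $\tomega_j = 2^{NQ}\,(S_{j+N}|\Delta_j f|)*E_j$, where $S_{j+N}|\Delta_j f| \geq 0$ and $E_j$ is a non-negative Schwartz function. Since $X_k$ is left-invariant, Proposition~\ref{prop:deriv} lets the derivative fall on the kernel $E_j$, so
$$
X_k \tomega_j = 2^{NQ}\,(S_{j+N}|\Delta_j f|)*(X_k E_j), \qquad k = 1, \dots, n_1.
$$
(The only point to check here is that the convolution identity applies even though $S_{j+N}|\Delta_j f|$ is merely bounded rather than decaying; but it is bounded by Bernstein's inequality, Proposition~\ref{prop:Bern}, and $E_j$ together with all its derivatives is Schwartz, so differentiation under the integral sign is justified. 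This is routine and is really the only technical point.)

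Next I would compute $X_k E_j$. Because $X_k$ is homogeneous of degree $1$ and $E_j(y) = 2^{jQ}E(2^j \cdot y)$, one gets $X_k E_j = 2^j (X_k E)_j$, where $(X_k E)_j(y) := 2^{jQ}(X_k E)(2^j \cdot y)$. Now I would simply reuse the pointwise bounds on $X_k E$ already established inside the proof of Proposition~\ref{prop:omegaderiv}: from $|X_1 (1+\|x_{\sigma}\|^{2m!})^{1/2m!}| \leq C$ and $|X_k (1+\|x_{\sigma}\|^{2m!})^{1/2m!}| \leq C 2^{-\sigma}$ for $k = 2,\dots,n_1$, one has
$$
|X_1 E(x)| \leq C E(x), \qquad |X_k E(x)| \leq C 2^{-\sigma} E(x) \quad (k = 2, \dots, n_1).
$$
Hence $|X_1 E_j| \leq C 2^j E_j$ and $|X_k E_j| \leq C 2^{j-\sigma} E_j$ for $k = 2, \dots, n_1$, pointwise.

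Finally I would conclude by the non-negativity of $S_{j+N}|\Delta_j f|$ and of $E_j$: taking absolute values inside the convolution,
$$
|X_k \tomega_j| \leq 2^{NQ}\,(S_{j+N}|\Delta_j f|)*|X_k E_j| \leq C 2^{j-\sigma}\, 2^{NQ}\,(S_{j+N}|\Delta_j f|)*E_j = C 2^{j-\sigma} \tomega_j
$$
for $k = 2, \dots, n_1$, and identically $|X_1 \tomega_j| \leq C 2^j \tomega_j$. I do not expect any genuine obstacle in this proposition: it is the easy honest-convolution counterpart of Proposition~\ref{prop:omegaderiv}, and the discrete-convolution subtlety warned about in the proof of Proposition~\ref{prop:omegaderiv} (that one cannot integrate by parts to move the derivative onto $E$) does not arise here precisely because $\tomega_j$ is defined directly as a convolution.
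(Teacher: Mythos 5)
Your proposal is correct and takes essentially the same approach as the paper: both exploit that $\tomega_j$ is an honest convolution against $E_j$, use the left-invariance of $X_k$ (Proposition~\ref{prop:deriv}) to shift the derivative onto $E$, and then invoke the pointwise bounds $|X_1E|\leq CE$ and $|X_kE|\leq C2^{-\sigma}E$ from the proof of Proposition~\ref{prop:omegaderiv} together with the non-negativity of the integrand. The paper merely phrases this by first rewriting $\tomega_j=2^{NQ}\int_G S_{j+N}|\Delta_jf|(2^{-j}\cdot y^{-1})E(y\cdot(2^j\cdot x))\,dy$, which after a change of variables is the same expression you differentiate.
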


\begin{proof}Note that
$\tomega_j$ can be written as $$\tomega_j=2^{NQ}\int_{G}S_{j+N}|\Delta_jf|(2^{-j} \cdot y^{-1} )E(y \cdot (2^{j} \cdot x))dy.$$
The proof is then almost identical to the previous proposition.
\end{proof}

\begin{prop} \label{prop:maxomega}
$$\|\sup_{j \in \mathbb{Z}} (2^j \omega_j) \|_{L^Q} \leq C_N 2^{\sigma(Q-1)/Q} \|\nabla_b f\|_{L^Q}.$$
\end{prop}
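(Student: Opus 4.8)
The plan is to bound $\sup_j$ wastefully by the $\ell^Q_j$ norm, which is affordable here precisely because $\omega_j$ is defined through an $\ell^Q$ (rather than $\ell^\infty$) sum over $\Lambda$: this makes $\|\omega_j\|_{L^Q}$ explicitly computable, and summable in $j$. Since $2^j\omega_j\ge 0$, the starting point is
\[
\Big\|\sup_j 2^j\omega_j\Big\|_{L^Q}^Q=\int_G\sup_j\big(2^j\omega_j(x)\big)^Q\,dx\le\sum_{j\in\mathbb Z}2^{jQ}\|\omega_j\|_{L^Q}^Q ,
\]
so it is enough to prove $\sum_j 2^{jQ}\|\omega_j\|_{L^Q}^Q\lesssim_N 2^{\sigma(Q-1)}\|\nabla_b f\|_{L^Q}^Q$ and take $Q$-th roots.

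To estimate a single term I would invoke Proposition~\ref{prop:omegaequiv} to write $\omega_j(x)^Q\lesssim\sum_{r\in\Lambda}\big[S_{j+N}|\Delta_jf|(x\cdot(2^{-j}\cdot r))\big]^Q E(r^{-1})^Q$ (the implicit constant independent of $N$ and $\sigma$), and then integrate in $x$: since Haar measure is invariant under the right translation $x\mapsto x\cdot(2^{-j}\cdot r)$, each $r$-summand contributes the same $x$-integral $\|S_{j+N}|\Delta_jf|\|_{L^Q}^Q$, whence
\[
\|\omega_j\|_{L^Q}^Q\lesssim\|S_{j+N}|\Delta_jf|\|_{L^Q}^Q\ \sum_{r\in\Lambda}E(r^{-1})^Q .
\]
By Young's inequality $\|S_{j+N}|\Delta_jf|\|_{L^Q}=\big\||\Delta_jf|*S_{j+N}\big\|_{L^Q}\le\|\Delta_jf\|_{L^Q}\|S_{j+N}\|_{L^1}=\|\Delta_jf\|_{L^Q}$. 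The remaining sum does not depend on $j$ or $f$; since $E(y)^Q\le e^{-Q\|y_\sigma\|}$ and $\Lambda$ has co-volume $2^{-NQ}$, comparison of the sum over $\Lambda$ with the integral $\int_G E(y^{-1})^Q\,dy=\|E\|_{L^Q}^Q$ gives $\sum_{r\in\Lambda}E(r^{-1})^Q\lesssim 2^{NQ}\|E\|_{L^Q}^Q\lesssim 2^{NQ}2^{\sigma(Q-1)}$, the last bound coming from the change of variables $y\mapsto y_\sigma$ (Jacobian $2^{-\sigma(Q-1)}$) already used to evaluate $\|E\|_{L^1}$. Thus $2^{jQ}\|\omega_j\|_{L^Q}^Q\lesssim_N 2^{\sigma(Q-1)}\|2^j\Delta_jf\|_{L^Q}^Q$. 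Finally, since $Q\ge 2$ (the case $Q=1$, i.e.\ $G=\mathbb R$, being trivial), the pointwise embedding $\ell^2\hookrightarrow\ell^Q$ together with Proposition~\ref{prop:derivLP} yields
\[
\sum_{j}\|2^j\Delta_jf\|_{L^Q}^Q=\int_G\sum_j\big(2^j|\Delta_jf|\big)^Q\,dx\le\int_G\Big(\sum_j\big(2^j|\Delta_jf|\big)^2\Big)^{Q/2}dx\lesssim\|\nabla_bf\|_{L^Q}^Q ,
\]
and combining the three displays proves the assertion.

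The one step that calls for a little care is the passage from $\sum_{r\in\Lambda}E(r^{-1})^Q$ to $\|E\|_{L^Q}^Q$: this is the standard comparison of a sum over a uniformly separated net of scale $2^{-N}\le 1$ with the corresponding integral, valid because $E(\cdot^{-1})^Q$ changes only by a bounded factor over each cell $r\cdot(2^{-N}\cdot[0,1)^n)$ of the partition of $G$ already exploited in the proof of Proposition~\ref{prop:omegaequiv} (Proposition~\ref{prop:xtheta} controls the variation of $\|y_\sigma\|$ under such translations); alternatively one sums the resulting one-dimensional geometric-type series over $\mathbb Z^n$ by hand, separating the unscaled first coordinate from the others. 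I do not expect any genuine obstacle: in particular, unlike the estimates for $\tilde h$ and $\tilde g$, this argument uses neither the smallness hypothesis (\ref{eq:smallassump}) nor any frequency localization, only the built-in $\ell^Q$ structure of $\omega_j$ — which is exactly what makes the wasteful bound $\sup_j\le\ell^Q_j$ in the first step cost nothing.
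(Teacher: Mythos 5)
Your proof is correct and follows the paper's argument essentially step for step: bound $\sup_j$ by the $\ell^Q_j$ sum, apply Proposition~\ref{prop:omegaequiv} and right-translation invariance of Haar measure, compare $\sum_{r\in\Lambda}E(r^{-1})^Q$ with $2^{NQ}\|E\|_{L^Q}^Q\lesssim 2^{NQ}2^{\sigma(Q-1)}$, and finish via $\ell^2\hookrightarrow\ell^Q$ and Proposition~\ref{prop:derivLP}. The only cosmetic difference is that you make explicit the Young's-inequality step $\|S_{j+N}|\Delta_jf|\|_{L^Q}\le\|\Delta_jf\|_{L^Q}$, which the paper leaves implicit.
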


\begin{proof}
This is because
\begin{align*}
\int_{G} (\sup_{j \in \mathbb{Z}} 2^j \omega_j)^Q (x) dx
\leq & \sum_{j \in \mathbb{Z}} \int_{G} (2^j \omega_j)^Q (x) dx  \\
\simeq & \sum_{j \in \mathbb{Z}} \sum_{r \in \Lambda} E(r^{-1})^Q \int_{G} [ 2^j S_{j+N}|\Delta_j f|(x \cdot 2^{-j}r)]^Q dx,
\end{align*}
the last line following from Proposition~\ref{prop:omegaequiv}. Now by the translation invariance of the Lebesgue measure (which is the Haar measure on $G$), the integral in the last sum is independent of $r$. Furthermore,
$$\sum_{r \in \Lambda} E(r)^Q \simeq \sum_{r \in \Lambda} \int_{\theta \in [0,1)^N} E(r \cdot (2^{-N} \cdot \theta))^Q d\theta = 2^{NQ} \int_G E(y)^Q dy \leq C 2^{NQ} 2^{\sigma(Q-1)};$$ here we used Lemma~\ref{prop:shift} in the first inequality, that every $y \in G$ can be written uniquely as $r \cdot (2^{-N} \cdot \theta)$ for some $r \in \Lambda$ and $\theta \in [0,1)^N$ in the middle identity, and that $\|E^Q\|_{L^1} \leq C2^{\sigma(Q-1)}$ in the last inequality. Altogether, this shows
\begin{align*}
\int_{G} (\sup_{j \in \mathbb{Z}} 2^j \omega_j)^Q (x) dx
\leq & C 2^{NQ} 2^{\sigma(Q-1)} \int_{G} \sum_{j \in \mathbb{Z}} \left(2^j |\Delta_j f|(x) \right)^Q dx \\
\leq & C 2^{NQ} 2^{\sigma(Q-1)} \int_{G} \left(\sum_{j \in \mathbb{Z}} \left(2^j |\Delta_j f|(x)\right)^2\right)^{Q/2} dx \\
\leq & C 2^{NQ} 2^{\sigma(Q-1)} \|\nabla_b f\|_{L^Q}^Q,
\end{align*}
the last inequality following from Proposition~\ref{prop:derivLP}. 
\end{proof}

\section{Estimating $h-\tilde{h}$} \label{sect:h0}

In this section we estimate $h-\tilde{h}$. First, we recall our construction: we have $h = \sum_{j} h_j$, where
$$
h_j (x): =  (1-\zeta_j(x)) S_{j+N}(\Delta_j f)(x) \quad \text{if $|j| \leq K$,}
$$
and $h_j := 0$ if $|j| > K$.
We also have
$$
\tilde{h} = \sum_{j} h_j \prod_{j' > j} (1-U_{j'})
$$
where
$$
U_j := (1-\zeta_j)\omega_j  \quad \text{if $|j| \leq K$,}
$$
and $U_j := 0$ if $|j| > K$.
We will estimate $\tilde{h}$ following our paradigm of approximation in Section~\ref{sect:alg}. By Proposition \ref{prop:omegapointwise} and \ref{prop:omegauniform}, we have $$C^{-1} |h_j| \leq U_j \leq 1.$$ It follows from Proposition~\ref{prop:algest} that $\|\tilde{h}\|_{L^{\infty}} \leq C$, proving (\ref{eq:h1}).

Next, following the derivation of (\ref{eq:gradf-Fbound}), we have
$$
|X_k(h-\tilde{h})| \leq C \sum_{|j| \leq K} |X_k U_j| + \sum_{|j| \leq K} |U_j| \sum_{j' < j} (\|\nabla_b h_{j'}\|_{L^{\infty}} + \|\nabla_b U_{j'}\|_{L^{\infty}})
$$
for $k = 1, \dots, n_1$.
But $U_j$ can be estimated by
$$|U_j(x)| \leq \omega_j(x) \chi_{\{ 2^j \omega_j > (1/2) \sum_{ -K \leq k < j,\, k \equiv j (\textrm{mod} R)} 2^k \omega_k \}}(x)$$
where $\chi$ denotes the characteristic function of a set. This is because $1-\zeta_j(x) = 0$ unless $2^j \omega_j(x)> (1/2) \sum_{-K \leq k < j,\, k \equiv j (\textrm{mod} R)} 2^k \omega_k(x)$.
Next, for $k = 2, \dots, n_1$, $X_k U_j$ can be estimated by
$$
|X_k U_j(x)| \leq C 2^{j-\sigma} \omega_j(x) \chi_{\{ 2^j \omega_j > (1/2) \sum_{ -K \leq k < j,\, k \equiv j (\textrm{mod} R)} 2^k \omega_k \}}(x)
$$
because $|X_k \omega_j| \leq C2^{j-\sigma} \omega_j$ by Proposition \ref{prop:omegaderiv}, and 
$$
|X_k \zeta_j| \leq C 2^{j-\sigma}  \chi_{\{ 2^j \omega_j > (1/2) \sum_{ -K \leq k < j,\, k \equiv j (\textrm{mod} R)} 2^k \omega_k \}}.
$$ 
(The last inequality follows by differentiating the definition of $\zeta_j$, and using $|X_k \omega_j| \leq C 2^{j-\sigma} \omega_j$ again.) Similarly,
$$
|X_1 U_j(x)| \leq C 2^{j} \omega_j(x) \chi_{\{ 2^j \omega_j > (1/2) \sum_{ -K \leq  k < j,\, k \equiv j (\textrm{mod} R)} 2^k \omega_k \}}(x).
$$
Finally, we have 
$$
\|\nabla_b h_j\|_{L^{\infty}} + \|\nabla_b U_j\|_{L^{\infty}} \leq C_N 2^{\sigma(Q-1)} 2^j \|\nabla_b f\|_{L^Q}.
$$ 
(The estimate on $\nabla_b U_j$ follows from the above discussion and Proposition~\ref{prop:omegauniform}, while the estimate on $\nabla_b h_j$ is similar.)
So altogether, for $k = 2, \dots, n_1$, we have
\begin{align*}
|X_k(h-\tilde{h})(x)| \leq & C 2^{-\sigma} \mathfrak{S}(x) +  C_N 2^{\sigma(Q-1)} \mathfrak{S}(x) \|\nabla_b f\|_{L^Q},
\end{align*}
where
$$\mathfrak{S}(x):=\sum_{ |j| \leq K } 2^j \omega_j(x) \chi_{\{ 2^j \omega_j > (1/2) \sum_{ -K \leq k < j,\, k \equiv j (\textrm{mod} R)} 2^k \omega_k \}}(x).$$
Similarly,
$$
|X_1(h-\tilde{h})(x)| \leq C_N 2^{\sigma(Q-1)} \mathfrak{S}(x).
$$

To proceed further, we estimate the $L^Q$ norm of the sum $\mathfrak{S}(x)$; this sum can be rewritten as
$$\sum_{c = 0}^{R-1} \sum_{|j| \leq K , j \equiv c (\textrm{mod} R)} 2^j \omega_j(x) \chi_{\{ 2^j \omega_j > (1/2) \sum_{ -K \leq  k < j,\, k \equiv c (\textrm{mod} R)} 2^k \omega_k \}}(x).$$
For each fixed $c$, we have
\begin{align}\label{ineqn:max}
\left|  \sum_{ |j| \leq K , j \equiv c (\textrm{mod} R)} 2^j \omega_j \chi_{\{ 2^j \omega_j > (1/2) \sum_{ -K \leq k < j,\, k \equiv c (\textrm{mod} R)} 2^k \omega_k \}} \right |
\leq 3 \sup_{|j| \leq K } (2^j \omega_j).
\end{align}
This is true because for a fixed $x$, one can pick the biggest integer $j_0$, with $|j_0| \leq K ,\, j_0 \equiv c (\textrm{mod} R)$ such that $2^{j_0} \omega_{j_0} > (1/2) \sum_{ -K \leq  k < j_0,\, k \equiv c (\textrm{mod} R)} 2^k \omega_k$. Then
\begin{align*}
&\sum_{ |j|\leq K,\, j \equiv c (\textrm{mod} R)}2^{j}\omega_{j}\chi_{\{ 2^j \omega_j > (1/2) \sum_{-K \leq k < j,\, k \equiv c (\textrm{mod} R)} 2^k \omega_k \}} \\
=&\sum_{ -K \leq j\leq j_0,\, j \equiv c (\textrm{mod} R)}2^{j}\omega_{j}\chi_{\{ 2^j \omega_j > (1/2) \sum_{-K \leq k < j,\, k \equiv c (\textrm{mod} R)} 2^k \omega_k \}}\\
\leq &2^{j_0}\omega_{j_0}+ \sum_{-K \leq j< j_0,\, j \equiv c (\textrm{mod} R)}2^{j}\omega_{j}\\
\leq &3\cdot 2^{j_0}\omega_{j_0}\\
\leq &3 \sup_{ |j| \leq K } (2^j \omega_j),
\end{align*}
yielding the inequality (\ref{ineqn:max}). Hence
$$\mathfrak{S}(x) \leq 3R \sup_{j} (2^j \omega_j)(x),$$
and from Proposition~\ref{prop:maxomega} we conclude that
$$\|\mathfrak{S}\|_{L^Q} \leq CR 2^{\sigma(Q-1)/Q} \|\nabla_b f\|_{L^Q}.$$

Putting these altogether, for $k = 2, \dots, n_1$, we have
\begin{equation}\label{ineqn:Dh}
\begin{split}
\|X_k(h-\tilde{h})\|_{L^Q}
\leq & C_N 2^{-\sigma/Q} R \|\nabla_b f\|_{L^Q} + C_N 2^{\sigma Q} R \|\nabla_b f\|_{L^Q}^2,\end{split}
\end{equation}
and this proves (\ref{eq:h2}).

Using the pointwise bound of $X_1(h-\tilde{h})$, and applying the same method as in (\ref{ineqn:Dh}), one can prove
$$
\|X_1 (h-\tilde{h})\|_{L^Q} \leq C_N 2^{\sigma Q} R \|\nabla_b f\|_{L^Q},
$$
completing our proof of (\ref{eq:h3}).

\section{Estimating $g-\tilde{g}$} \label{sect:g0}

In this section we estimate $g-\tilde{g}$. Again we recall our construction: we have $g = \sum_{j} g_j$, where
$$
g_j (x) = \zeta_j(x) S_{j+N}(\Delta_j f)(x) \quad \text{if $|j| \leq K$},
$$
and $g_j:=0$ if $|j| > K$. We also have
$$
\tilde{g} = \sum_{c=0}^{R-1} \sum_{j \equiv c (\text{mod } R)} g_j \prod_{j' > j \atop j' \equiv c (\text{mod }R)} (1-G_{j'})
$$
where
$$
G_j = \sum_{t > 0, j-t \geq -K  \atop t \equiv 0 (\text{mod } R)} 2^{-t} \tomega_{j-t} \quad \text{for $|j| \leq K$},
$$
and $G_j:=0$ for $|j| > K$.
Now for $|j| \leq K$, by Proposition \ref{prop:omegapointwise}, 
$$
C^{-1} |g_j| \leq \omega_j \zeta_j.
$$ 
But then 
$$
C^{-1} \omega_j \zeta_j \leq G_j \leq 1.
$$ 
In fact, on the support of $\zeta_j$, 
$$
\omega_j \leq \sum_{-K \leq k < j \atop k \equiv j (\text{mod } R)} 2^{k-j} \omega_k = \sum_{t > 0, j-t \geq -K \atop t \equiv 0 (\text{mod } R)} 2^{-t} \omega_{j-t} \leq C G_j,
$$ 
and the first inequality follows. The last inequality comes from Proposition~\ref{prop:omegauniform}. Thus 
$$
C^{-1} |g_j| \leq G_j \leq 1,
$$ 
and from Proposition~\ref{prop:algest}, we have $|\tilde{g}| \leq CR$. This proves (\ref{eq:g1}).

Next
\begin{align*}
g-\tilde{g}
&= \sum_{c=0}^{R-1} \sum_{j \equiv c (\text{mod } R) \atop |j| \leq K} g_j \left(1 - \prod_{K \geq j' > j \atop j' \equiv c (\text{mod } R)} (1-G_{j'}) \right) \\
&= \sum_{c=0}^{R-1} \sum_{j \equiv c (\text{mod } R) \atop  |j| \leq K} G_j H_j \\
&= \sum_{|j| \leq K} G_j H_j
\end{align*}
where for $|j| \leq K$,
\begin{equation} \label{eq:Hjdef}
H_j := \sum_{-K \leq j' < j \atop j' \equiv j (\text{mod } R)} g_{j'} \prod_{j' < j'' < j \atop j'' \equiv j (\text{mod } R)} (1 - G_{j''}).
\end{equation}
Note that both $G_j$ and $H_j$ are $C^{\infty}$ functions, since the sums and products defining them are only finite.
By Proposition~\ref{prop:algest}, an immediate estimate of $H_j$ is
$$|H_j|\leq \sum_{-K \leq j'<j}|g_{j'}|\prod_{j'<j''<j}(1-G_{j''})\leq C\sum_{-K \leq j'<j}G_{j'}\prod_{j'<j''<j}(1-G_{j''})\leq C.$$

We now collect below some estimates for $\nabla_b g_j$, $\nabla_b G_j$ and $\nabla_b H_j$ for $|j| \leq K$. To begin with, we have
\begin{prop} \label{prop:tomegamaximal}
$$\tomega_j \leq C_N 2^{\sigma(Q+1)} MM(\Delta_j f)$$
where $M$ is the maximal function defined before Proposition \ref{prop:max}.
\end{prop}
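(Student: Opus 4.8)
The plan is to strip off the two convolutions in $\tomega_j = 2^{NQ}\,E_j S_{j+N}|\Delta_j f|$ one at a time, converting each into a maximal function by means of Proposition~\ref{prop:max}(c), and to control the anisotropy of the kernel $E$ by a single crude power of $2^{\sigma}$. Throughout, recall that $\Delta_j f$ is bounded (Proposition~\ref{prop:Bern}), so all the convolutions and maximal functions below converge absolutely.

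First I would treat the inner convolution $S_{j+N}|\Delta_j f| = |\Delta_j f| * S_{j+N}$. Since $S(x)\simeq e^{-\|x\|}$ is, up to constants, a decreasing function of $\|x\|$, the kernel $S_{j+N}(y) = 2^{(j+N)Q} S(2^{j+N}\cdot y)$ is pointwise dominated by a decreasing function of $\|y\|$ whose integral over $G$ is $\simeq 1$ (substitute $z = 2^{j+N}\cdot y$). Proposition~\ref{prop:max}(c) then gives $S_{j+N}|\Delta_j f|(x) \leq C\, M(\Delta_j f)(x)$ pointwise, using that the maximal function is insensitive to absolute values.

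Next I would treat the outer convolution against $E_j$, where the anisotropy shows up. The elementary input is the inequality $\|x_{\sigma}\| \geq 2^{-\sigma}\|x\|$ for all $x\in G$: expanding $\|\cdot\|$ and $x_{\sigma} = 2^{-\sigma}\cdot x^{\sigma}$ coordinatewise, one sees that each summand of $\|x\|^{2m!}$ is scaled by at most $2^{-2m!\sigma}$ in $\|x_{\sigma}\|^{2m!}$ (the first coordinate is left unchanged, all the others are multiplied by exactly $2^{-2m!\sigma}$). Combined with $E(x)\simeq e^{-\|x_{\sigma}\|}$ and $(\lambda\cdot x)_{\sigma} = \lambda\cdot(x_{\sigma})$, this yields $E_j(y) = 2^{jQ} E(2^j\cdot y) \leq C\, 2^{jQ} e^{-2^{j-\sigma}\|y\|}$, again a decreasing function of $\|y\|$, with integral over $G$ equal to $C\,2^{\sigma Q}$ (substitute $z = 2^{j-\sigma}\cdot y$). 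A second application of Proposition~\ref{prop:max}(c), this time to the non-negative function $M(\Delta_j f)$, gives $(M(\Delta_j f)) * E_j(x) \leq C\, 2^{\sigma Q}\, MM(\Delta_j f)(x)$.

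Putting the two steps together, and using that $S,E \geq 0$ so pointwise inequalities pass through the convolutions, $\tomega_j(x) = 2^{NQ}\,(S_{j+N}|\Delta_j f|) * E_j(x) \leq C\, 2^{NQ}\,(M(\Delta_j f)) * E_j(x) \leq C\, 2^{NQ}\, 2^{\sigma Q}\, MM(\Delta_j f)(x) \leq C_N\, 2^{\sigma(Q+1)}\, MM(\Delta_j f)(x)$, absorbing $2^{NQ}$ into $C_N$ and trivially estimating $2^{\sigma Q}\leq 2^{\sigma(Q+1)}$. There is no genuine obstacle; the only place asking for a little care is the rescaling estimate for $E$, where I would deliberately be wasteful — dominating $E$ by the fully isotropic $e^{-2^{-\sigma}\|x\|}$ costs $2^{\sigma Q}$ rather than the sharp $2^{\sigma(Q-1)} = \|E\|_{L^1}$ — but this loss is harmless for the stated bound.
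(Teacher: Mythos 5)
Your proof is correct and follows essentially the same route as the paper: both strip off the two convolutions via Proposition~\ref{prop:max}(c), after dominating $E$ by a radially decreasing kernel using the inequality $\|x_\sigma\| \geq 2^{-\sigma}\|x\|$. The only cosmetic difference is your choice of dominating kernel $e^{-2^{-\sigma}\|x\|}$ (with $L^1$ norm $\simeq 2^{\sigma Q}$) versus the paper's $2^{\sigma(Q+1)}(1+\|x\|)^{-(Q+1)}$; both are wasteful relative to $\|E\|_{L^1} \simeq 2^{\sigma(Q-1)}$ but fit comfortably within the stated $2^{\sigma(Q+1)}$.
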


\begin{proof}
$$E(x) \leq C (1+\|x_{\sigma}\|)^{-(Q+1)} \leq C 2^{\sigma(Q+1)} (1+ \|x\|)^{-(Q+1)}$$ and the latter is an integrable radially decreasing function. Thus $$\tomega_j = 2^{NQ} E_j(S_{j+N}|\Delta_j f|) \leq C 2^{NQ} 2^{\sigma(Q+1)} MM(\Delta_j f).$$
\end{proof}

\begin{prop} \label{prop:DGj}
$$|\nabla_b G_j| \leq C_N 2^{\sigma(Q+1)} \sum_{t > 0 \atop t \equiv 0 (\textrm{mod } R)} 2^{-t} 2^{j-t} MM(\Delta_{j-t}f).$$
\end{prop}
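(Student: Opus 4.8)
\textbf{Proof proposal for Proposition~\ref{prop:DGj}.}

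The plan is to differentiate the defining series for $G_j$ term by term and then estimate each resulting term using the two preceding propositions. Recall that $G_j = \sum_{t > 0,\, t \equiv 0 (\textrm{mod } R)} 2^{-t} \tomega_{j-t}$, so that, granting that term-by-term differentiation is legitimate, one has $\nabla_b G_j = \sum_{t > 0,\, t \equiv 0 (\textrm{mod } R)} 2^{-t} \nabla_b \tomega_{j-t}$. The legitimacy of interchanging $\nabla_b$ with the sum is routine: each $\tomega_\ell$ is a fixed finite sum of convolutions of Schwartz kernels with $|\Delta_\ell f|$, hence smooth, and the bounds below show the differentiated series converges locally uniformly.

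Next I would invoke Proposition~\ref{prop:Dtomegaj}, which gives $|X_1 \tomega_{j-t}| \leq C 2^{j-t} \tomega_{j-t}$ and $|X_k \tomega_{j-t}| \leq C 2^{(j-t)-\sigma} \tomega_{j-t} \leq C 2^{j-t} \tomega_{j-t}$ for $k = 2, \dots, n_1$; combining the components yields $|\nabla_b \tomega_{j-t}| \leq C 2^{j-t} \tomega_{j-t}$. Then Proposition~\ref{prop:tomegamaximal} bounds $\tomega_{j-t} \leq C_N 2^{\sigma(Q+1)} MM(\Delta_{j-t} f)$ pointwise. Stringing these together term by term gives
\[
|\nabla_b G_j| \leq \sum_{t > 0,\, t \equiv 0 (\textrm{mod } R)} 2^{-t}\, C_N\, 2^{j-t}\, 2^{\sigma(Q+1)}\, MM(\Delta_{j-t} f) = C_N 2^{\sigma(Q+1)} \sum_{t > 0,\, t \equiv 0 (\textrm{mod } R)} 2^{-t} 2^{j-t} MM(\Delta_{j-t} f),
\]
which is exactly the asserted estimate.

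There is essentially no obstacle here; the proposition is a mechanical combination of Propositions~\ref{prop:Dtomegaj} and~\ref{prop:tomegamaximal}. The only point requiring a word is the interchange of derivative and infinite sum, which, as noted, follows from the rapid decay built into the heat-type kernels $S$ and the Schwartz kernel $E$ together with the uniform-in-$j$ bound $\|\Delta_j f\|_{L^\infty} \leq C\|\nabla_b f\|_{L^Q}$ from Proposition~\ref{prop:Bern}; one could alternatively truncate the sum, apply the estimate, and pass to the limit. I would keep the written proof to just these few lines.
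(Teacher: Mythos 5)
Your proposal matches the paper's own proof exactly: differentiate the series for $G_j$ term by term, bound $|\nabla_b \tomega_{j-t}|$ by $C2^{j-t}\tomega_{j-t}$ via Proposition~\ref{prop:Dtomegaj}, and then control $\tomega_{j-t}$ by $C_N 2^{\sigma(Q+1)} MM(\Delta_{j-t}f)$ via Proposition~\ref{prop:tomegamaximal}. The paper states this as a one-sentence proof, and your more detailed write-up (including the remark on interchanging $\nabla_b$ with the infinite sum) is a correct elaboration of the same argument.
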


\begin{proof}
One differentiates the definition of $G_j$ and estimates the derivatives of $\tomega_j$ using Proposition~\ref{prop:tomegamaximal} and~\ref{prop:Dtomegaj}.
\end{proof}

\begin{prop} \label{prop:Dgj}
$$|\nabla_b g_j| \leq C_N 2^j M(\Delta_j f).$$
\end{prop}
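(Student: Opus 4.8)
The plan is to apply the product rule to $g_j = \zeta_j \cdot S_{j+N}(\Delta_j f)$ and bound each of the two resulting terms by $C_N 2^j M(\Delta_j f)$. Thus for $k = 1, \dots, n_1$ I would write
$$X_k g_j = (X_k\zeta_j)\, S_{j+N}(\Delta_j f) + \zeta_j\, X_k\big(S_{j+N}(\Delta_j f)\big),$$
and record first the crude bound $|X_k\zeta_j| \leq C 2^j$, valid for every $k$. This follows by differentiating the definition of $\zeta_j$ via the chain rule and using $|X_k\omega_l| \leq C 2^l\omega_l$ from Proposition~\ref{prop:omegaderiv} (the worst case, namely $X_1$): on the set where $\zeta'$ is supported one has $2^j\omega_j \simeq \sum_{l<j,\, l\equiv j\,(\mathrm{mod}\, R)} 2^l\omega_l$, which forces $|X_k\zeta_j| \leq C 2^j$ there, while $X_k\zeta_j = 0$ off that set. (This is the same computation already carried out in Section~\ref{sect:h0}, with the extra $2^{-\sigma}$ gain available for $k\geq 2$ simply discarded.)

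Next I would estimate the two terms. For the first, $|S_{j+N}(\Delta_j f)(x)| \leq (|\Delta_j f|*S_{j+N})(x)$, and $S_{j+N}$ is pointwise dominated by $2^{(j+N)Q}\varphi(2^{j+N}\|\cdot\|)$ for a fixed decreasing integrable majorant $\varphi$ of $S$; hence Proposition~\ref{prop:max}(c) gives $|S_{j+N}(\Delta_j f)| \leq C M(\Delta_j f)$ with $C$ independent of $j$ and $N$, and the first term is $\leq C 2^j M(\Delta_j f)$. The second term is where the factor $2^N$ appears. Writing $S_{j+N}(\Delta_j f) = (\Delta_j f)*S_{j+N}$ and invoking the identity $X_k(u*v) = u*(X_k v)$ from Proposition~\ref{prop:deriv} — and \emph{not} $(X_k u)*v$ — we get $X_k\big(S_{j+N}(\Delta_j f)\big) = (\Delta_j f)*(X_k S_{j+N})$. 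Since $X_k$ is left-invariant and homogeneous of degree $1$, $X_k S_{j+N} = 2^{j+N}(X_k S)_{j+N}$, and $X_k S$ is again a Schwartz function, hence dominated by a fixed decreasing integrable majorant; so Proposition~\ref{prop:max}(c) yields $|(\Delta_j f)*(X_k S)_{j+N}| \leq C M(\Delta_j f)$ uniformly in $j$ and $N$. Combined with $|\zeta_j|\leq 1$, the second term is $\leq C 2^{j+N} M(\Delta_j f)$. Adding the two gives $|\nabla_b g_j| \leq C_N 2^j M(\Delta_j f)$ with $C_N = C 2^N$, as claimed.

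The only point requiring genuine care is the handling of the left-invariant derivative of the convolution: one must let $X_k$ fall on the kernel $S_{j+N}$ (paying the resulting $2^{j+N}$) rather than on $\Delta_j f$, which is exactly the non-abelian bookkeeping emphasized in the introduction. Everything else is a routine application of the maximal-function domination in Proposition~\ref{prop:max}(c) together with the chain-rule estimate for $\zeta_j$.
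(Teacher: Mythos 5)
Your proof is correct and matches the paper's own (one-line sketch of a) proof: differentiate the product $\zeta_j\,S_{j+N}(\Delta_j f)$, bound $|X_k\zeta_j|\leq C2^j$ using Proposition~\ref{prop:omegaderiv}, let the left-invariant derivative fall on the kernel $S_{j+N}$ (picking up the factor $2^{j+N}$), and dominate everything by the maximal function via Proposition~\ref{prop:max}(c). You have simply filled in the details that the paper leaves implicit.
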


\begin{proof}
One differentiates $g_j(x) = \zeta_j(x) S_{j+N}(\Delta_jf)(x)$, letting the derivative hit either $\zeta_j$ or $S_{j+N}$, and estimates the rest by the maximal function. The worst term is when the derivative hits $S_{j+N}$, which gives a factor of $2^{j+N}$.
\end{proof}

\begin{prop} \label{prop:DHj}
$$
|\nabla_b H_j| \leq C_N 2^{\sigma(Q+1)} \sum_{t > 0 \atop t \equiv 0 (\textrm{mod } R)} 2^{j-t} MM(\Delta_{j-t} f).
$$
\end{prop}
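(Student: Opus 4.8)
The plan is to run the abstract paradigm of Section~\ref{sect:alg} one more time: now $g_{j'}$ plays the role of $h_{j'}$, the functions $G_{j'}$ play the role of $U_{j'}$, and $H_j$ is the analogue of $V_j$, except that every summation index is restricted to the arithmetic progression $j' \equiv j \ (\text{mod } R)$. First I would establish the analogue of the Leibniz-type identity (\ref{eq:derivrule}): differentiating the definition (\ref{eq:Hjdef}) of $H_j$, the derivative $X_k$ lands either on some $g_{j'}$ or on one of the factors $(1-G_{j''})$ with $j' < j'' < j$, and collecting terms gives
\begin{equation} \label{eq:XkHj}
X_k H_j = \sum_{j' < j \atop j' \equiv j (\text{mod } R)} \bigl( (X_k g_{j'}) - (X_k G_{j'}) H_{j'} \bigr) \prod_{j' < j'' < j \atop j'' \equiv j (\text{mod } R)} (1 - G_{j''}).
\end{equation}
Since $0 \le G_{j''} \le 1$ for all $j''$ and $|H_{j'}| \le C$ pointwise (both facts are recorded earlier in this section), every product in (\ref{eq:XkHj}) lies in $[0,1]$, and I obtain the pointwise bound
$$
|X_k H_j| \le C \sum_{j' < j \atop j' \equiv j (\text{mod } R)} \bigl( |X_k g_{j'}| + |X_k G_{j'}| \bigr), \qquad k = 1, \dots, n_1.
$$

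The remaining task is to substitute the estimates already at hand and reorganize the sum. Writing $j' = j - t$ with $t > 0$, $t \equiv 0 \ (\text{mod } R)$, Proposition~\ref{prop:Dgj} bounds $|X_k g_{j-t}|$ by $C_N 2^{j-t} M(\Delta_{j-t} f)$, which is at most $C_N 2^{\sigma(Q+1)} 2^{j-t} MM(\Delta_{j-t} f)$ since $2^{\sigma(Q+1)} \ge 1$ and $M\varphi \le MM\varphi$ pointwise for any nonnegative $\varphi$; this term is already of the desired shape. For the other piece, Proposition~\ref{prop:DGj} gives $|X_k G_{j-t}| \le C_N 2^{\sigma(Q+1)} \sum_s 2^{-s} 2^{(j-t)-s} MM(\Delta_{(j-t)-s} f)$, the sum being over positive multiples $s$ of $R$, and the one genuine computation is to sum this over $t$ and re-index by $u = t + s$: the exponent becomes $j - u - s$, the inner sum over $s$ is a convergent geometric series bounded by a constant, and the iterated sum collapses to $C_N 2^{\sigma(Q+1)} \sum_{u} 2^{j-u} MM(\Delta_{j-u} f)$ with $u$ ranging over positive multiples of $R$. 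Adding the two contributions yields the proposition.

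The only real obstacle is bookkeeping. One must check that the modular restriction on the indices does not spoil identity (\ref{eq:XkHj}) — in particular, that the coefficient of each $X_k G_{j''}$ is exactly $-H_{j''}$ times the appropriate truncated product — and one must carry out the re-indexing of the double sum carefully, using the decay factor $2^{-s}$ coming from Proposition~\ref{prop:DGj} so that the inner sum is a single geometric series rather than contributing a factor $u/R$. Everything else is a direct substitution of Propositions~\ref{prop:Dgj} and~\ref{prop:DGj} into the pointwise bound above.
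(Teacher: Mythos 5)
Your proof is correct and follows essentially the same route as the paper: derive the Leibniz identity for $\nabla_b H_j$ analogous to (\ref{eq:derivrule}) (with indices restricted to the arithmetic progression mod $R$), bound the products by $1$ and $|H_{j'}|$ by $C$ to get $|\nabla_b H_j| \le C \sum_{j' < j, j'\equiv j} (|\nabla_b g_{j'}| + |\nabla_b G_{j'}|)$, then substitute Propositions~\ref{prop:Dgj} and~\ref{prop:DGj} and collapse the resulting double sum using the geometric decay $2^{-s}$. The paper folds the $g$-term into the $t=0$ slot of a single double sum before rearranging, whereas you treat the two contributions separately and re-index explicitly, but the computation is the same.
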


\begin{proof}
Following the derivation of (\ref{eq:derivrule}) from (\ref{eq:Hconstr}) in Section~\ref{sect:alg}, and using the definition of $H_j$ in (\ref{eq:Hjdef}), we have
\begin{equation} \label{eq:derivruleH}
\nabla_b H_j = \sum_{ -K \leq j' < j \atop j' \equiv j (\textrm{mod } R)} \left(\nabla_b g_{j'}- (\nabla_b G_{j'}) H_{j'} \right) \prod_{j' < j'' < j \atop j' \equiv j (\textrm{mod } R)} (1 - G_{j''}),
\end{equation}
so
\begin{align*}
|\nabla_b H_j| \leq C \sum_{j' < j \atop j' \equiv j (\textrm{mod } R)} (|\nabla_b g_{j'}| + |\nabla_b G_{j'}|).
\end{align*}
This, with Proposition \ref{prop:DGj} and \ref{prop:Dgj}, leads to
$$C_N 2^{\sigma(Q+1)} \sum_{l > 0 \atop l \equiv 0 (\textrm{mod } R)} \sum_{t \geq 0 \atop t \equiv 0 (\textrm{mod } R)} 2^{-t} 2^{j-l-t} MM(\Delta_{j-l-t}f).$$ Rearranging gives the desired bound.
\end{proof}

The proofs of the next two estimates are the same as those in Proposition \ref{prop:Dgj} and \ref{prop:DGj}, except that one differentiates once more.

\begin{prop} \label{prop:D2gj}
$$|\nabla_b^2 g_j| \leq C_N 2^{2j} M(\Delta_j f).$$
\end{prop}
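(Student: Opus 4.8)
The plan is to imitate the proof of Proposition~\ref{prop:Dgj}, applying the Leibniz rule twice. Writing $g_j = \zeta_j \cdot (S_{j+N}\Delta_j f)$, one expands $\nabla_b^2 g_j$ into a finite sum of products $(\nabla_b^a \zeta_j)\,(\nabla_b^b(S_{j+N}\Delta_j f))$ with $a+b=2$, where $\nabla_b^a$ denotes any $a$-fold application of vector fields from $\{X_1,\dots,X_{n_1}\}$. It then suffices to prove the two pointwise bounds $|\nabla_b^a\zeta_j| \leq C\,2^{aj}$ and $|\nabla_b^b(S_{j+N}\Delta_j f)| \leq C_N\,2^{bj}\,M(\Delta_j f)$ for $a,b \in \{0,1,2\}$; multiplying these and absorbing the fixed powers $2^{bN}$ into $C_N$ gives that each term, hence $|\nabla_b^2 g_j|$, is $\leq C_N\, 2^{2j}\, M(\Delta_j f)$.

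For the factor involving $S_{j+N}\Delta_j f$, one uses that a left-invariant derivative of a convolution falls on the kernel (Proposition~\ref{prop:deriv}), so $\nabla_b^b(S_{j+N}\Delta_j f) = (\Delta_j f)*(\nabla_b^b S_{j+N})$, and $\nabla_b^b S_{j+N}$ is $2^{b(j+N)}$ times an $L^1$-normalized dilate at scale $2^{-(j+N)}$ of the Schwartz function $\nabla_b^b S$, which is dominated by a fixed radially decreasing integrable function. Proposition~\ref{prop:max}(c) then yields $|\nabla_b^b(S_{j+N}\Delta_j f)(x)| \leq C\, 2^{b(j+N)}\, M(\Delta_j f)(x)$ (the case $b=0$ being just $S_{j+N}|\Delta_j f| \leq C\,M(\Delta_j f)$).

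For the factor involving $\zeta_j$, I would first record the second-order analogue of Proposition~\ref{prop:omegaderiv}, namely $|\nabla_b^2\omega_j| \leq C\,2^{2j}\omega_j$, proved exactly as there: $\omega_j$ depends on $x$ only through the factors $E(r^{-1}\cdot(2^j\cdot x))$, each $X_k$ acting on such a factor produces a $2^j$ and an $|X_kE| \lesssim E$, and these bounds survive the $l^Q$-summation in $r$ (as in the computation bounding $|X_k\omega_j|$). Writing $\zeta_j = \zeta(\rho_j)$ with $\rho_j = 2^j\omega_j \big/ \sum_{k<j,\,k\equiv j\,(\mathrm{mod}\,R)} 2^k\omega_k$, the chain and quotient rules together with $|\nabla_b^a(2^k\omega_k)| \leq C\,2^{ak}\,2^k\omega_k \leq C\,2^{aj}\,2^k\omega_k$ for $k \leq j$ give $|\nabla_b^a\rho_j| \leq C\,2^{aj}\rho_j$ on the support of the $a$-th derivative of $\zeta$; since $\rho_j \simeq 1$ there, we get $|\nabla_b^a\zeta_j| \leq C\,2^{aj}$, supported in $\{2^j\omega_j > \tfrac12 \sum_{k<j,\,k\equiv j\,(\mathrm{mod}\,R)} 2^k\omega_k\}$, for $a=1,2$.

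The argument is entirely routine once this bookkeeping is in place; the only point needing a little care is the second-derivative estimate for $\zeta_j$, where one must observe that every term $2^k\omega_k$ in the denominator of $\rho_j$ is differentiated no worse than $\omega_j$ itself (because $k<j$), so that the quotient's first and second derivatives are controlled by $2^j$, resp.\ $2^{2j}$, times the quotient on the relevant region --- exactly the mechanism already used in Section~\ref{sect:h0} to estimate $X_k\zeta_j$.
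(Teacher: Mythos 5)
Your proposal is correct and follows essentially the same approach as the paper, which dispatches this proposition by remarking that it is obtained from Proposition~\ref{prop:Dgj} ``by differentiating once more'': apply the Leibniz rule twice to $g_j=\zeta_j\cdot S_{j+N}(\Delta_jf)$, bound derivatives landing on $S_{j+N}$ by the maximal function (each picking up a factor $2^{j+N}$ via Proposition~\ref{prop:max}(c)), and bound derivatives landing on $\zeta_j$ by $C\,2^{aj}$ using the chain/quotient rule together with $|\nabla_b^a\omega_k|\leq C\,2^{ak}\omega_k$. You correctly identify the one new ingredient (the second-order analogue of Proposition~\ref{prop:omegaderiv} and the resulting bound $|\nabla_b^2\zeta_j|\lesssim 2^{2j}$), which the paper leaves implicit.
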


\begin{prop} \label{prop:D2Gj}
$$|\nabla_b^2 G_j| \leq C_N 2^{\sigma(Q+1)} \sum_{t > 0 \atop t \equiv 0 (\textrm{mod } R)} 2^{-t} 2^{2(j-t)} MM(\Delta_{j-t} f).$$
\end{prop}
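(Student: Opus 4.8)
The plan is to follow the proof of Proposition~\ref{prop:DGj} verbatim, differentiating the definition $G_j = \sum_{t>0,\ t\equiv 0\,(\textrm{mod } R)} 2^{-t}\tomega_{j-t}$ term by term, the only new input being a second-order analogue of Proposition~\ref{prop:Dtomegaj}, namely
$$
|\nabla_b^2 \tomega_j| \leq C\, 2^{2j}\,\tomega_j .
$$
To obtain this I would start from the representation $\tomega_j(x)=2^{NQ}\int_G S_{j+N}|\Delta_j f|(2^{-j}\cdot y^{-1})\,E\big(y\cdot(2^j\cdot x)\big)\,dy$ used in the proof of Proposition~\ref{prop:Dtomegaj}, in which all of the $x$-dependence sits inside the factor $E$. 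Writing $E=e^{-\phi}$ with $\phi(x)=(1+\|x_\sigma\|^{2m!})^{1/(2m!)}$, recall that $\phi$ is smooth on all of $G$ (each exponent $2m!/j$ is an even integer, so $\|x_\sigma\|^{2m!}$ is a polynomial), and that $|X_1\phi|\lesssim 1$ and $|X_k\phi|\lesssim 2^{-\sigma}$ for $k=2,\dots,n_1$, as was already checked. Differentiating these estimates once more by the product and chain rules — and counting homogeneity to see that every term that appears is again bounded — gives $|X_iX_k\phi|\lesssim 1$ for all $i,k\in\{1,\dots,n_1\}$, hence $|\nabla_b^2 E|\leq C\,E$. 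Since left-invariant vector fields are homogeneous of degree~$1$ and commute with left translations, $\big|\nabla_b^2\big(E(y\cdot(2^j\cdot x))\big)\big|\leq C\,2^{2j}\,E(y\cdot(2^j\cdot x))$; differentiating twice under the (absolutely convergent) integral and using that $\tomega_j\geq 0$ then yields the displayed bound on $\nabla_b^2\tomega_j$.

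Next I would combine this with Proposition~\ref{prop:tomegamaximal}, which gives $\tomega_{j-t}\leq C_N\,2^{\sigma(Q+1)}\,MM(\Delta_{j-t}f)$, to conclude
$$
|\nabla_b^2 \tomega_{j-t}| \leq C_N\, 2^{\sigma(Q+1)}\, 2^{2(j-t)}\, MM(\Delta_{j-t}f).
$$
Differentiating $G_j$ term by term and summing the absolute values over $t>0$ with $t\equiv 0\,(\textrm{mod } R)$ then gives
$$
|\nabla_b^2 G_j| \leq \sum_{t>0 \atop t\equiv 0\,(\textrm{mod } R)} 2^{-t}\,|\nabla_b^2 \tomega_{j-t}| \leq C_N\, 2^{\sigma(Q+1)} \sum_{t>0 \atop t\equiv 0\,(\textrm{mod } R)} 2^{-t}\, 2^{2(j-t)}\, MM(\Delta_{j-t}f),
$$
which is exactly the asserted estimate.

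I do not expect a genuine obstacle here: the whole argument is just the ``differentiate once more'' version of Proposition~\ref{prop:DGj}, and the only point needing any care is the bound $|\nabla_b^2 E|\leq C\,E$, i.e.\ that the exponent $\phi$ has bounded first and second horizontal derivatives uniformly in $\sigma$. This is the same computation already performed to first order in the proof of Proposition~\ref{prop:Dtomegaj}; in the $X_k$ directions with $k\geq 2$ one in fact gains extra factors of $2^{-\sigma}$, which we simply discard.
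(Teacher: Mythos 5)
Your proposal is correct and follows exactly the approach the paper takes: the paper states that the proof of Proposition~\ref{prop:D2Gj} is ``the same as'' that of Proposition~\ref{prop:DGj} ``except that one differentiates once more,'' which is precisely what you do, supplying the needed second-order analogue $|\nabla_b^2\tomega_j|\leq C\,2^{2j}\tomega_j$ (via $|\nabla_b^2 E|\leq CE$) and then invoking Proposition~\ref{prop:tomegamaximal}. You fill in somewhat more detail than the paper does on the bound $|\nabla_b^2 E|\leq CE$, but the argument is the intended one.
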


Finally we estimate second derivatives of $H_j$:

\begin{prop} \label{prop:D2Hj}
$$|\nabla_b^2 H_j| \leq C_N 2^{2\sigma(Q+1)} \sum_{t > 0 \atop t \equiv 0 (\textrm{mod } R)} 2^{2(j-t)} MM(\Delta_{j-t} f).$$
\end{prop}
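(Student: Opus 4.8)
The plan is to differentiate the first-order identity (\ref{eq:derivruleH}) a second time, repeating the bookkeeping that produced (\ref{eq:derivruleH}) from (\ref{eq:Hjdef}). Write $\Pi_{j'} := \prod_{j'<j''<j,\ j''\equiv j\ (\mathrm{mod}\ R)}(1-G_{j''})$, so that (\ref{eq:derivruleH}) reads $\nabla_b H_j = \sum_{j'<j,\ j'\equiv j}\bigl(\nabla_b g_{j'}-(\nabla_b G_{j'})H_{j'}\bigr)\Pi_{j'}$. Applying $\nabla_b$ and, in the term where the derivative falls on one of the factors $(1-G_{j''})$, switching the order of the two summations exactly as in Section~\ref{sect:alg}, one finds that the truncated inner sum that reappears is again $\nabla_b H_{j'}$ (by (\ref{eq:derivruleH}) applied with $j$ replaced by $j'$). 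This gives
\[
\nabla_b^2 H_j = \sum_{j'<j,\ j'\equiv j}\Bigl(\nabla_b^2 g_{j'} - (\nabla_b^2 G_{j'})H_{j'} - (\nabla_b G_{j'})(\nabla_b H_{j'}) - (\nabla_b H_{j'})(\nabla_b G_{j'})\Bigr)\Pi_{j'},
\]
the last two terms recording the two ways the two derivatives can be distributed among $G_{j'}$ and $H_{j'}$. Since $0\le G_{j''}\le 1$ (so $0\le\Pi_{j'}\le 1$) and $|H_{j'}|\le C$, taking absolute values yields the pointwise bound
\[
|\nabla_b^2 H_j| \le C\sum_{j'<j,\ j'\equiv j}\Bigl(|\nabla_b^2 g_{j'}| + |\nabla_b^2 G_{j'}| + |\nabla_b G_{j'}|\,|\nabla_b H_{j'}|\Bigr).
\]

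For the first two sums I would simply insert Propositions~\ref{prop:D2gj} and~\ref{prop:D2Gj}: writing $j'=j-l$ with $l>0$, $l\equiv 0\ (\mathrm{mod}\ R)$, and in the $G_{j'}$ sum also setting $t=l+s$ and summing the geometric series in $s$, both sums are bounded by $C_N 2^{2\sigma(Q+1)}\sum_{t>0,\ t\equiv 0}2^{2(j-t)}MM(\Delta_{j-t}f)$ (using $M(\Delta_k f)\le C\,MM(\Delta_k f)$ for the first). The cross term $\sum_{j'}|\nabla_b G_{j'}||\nabla_b H_{j'}|$ is the real issue, being the only place a genuine \emph{product} of two iterated maximal functions appears. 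Plugging in Propositions~\ref{prop:DGj} and~\ref{prop:DHj} gives a double sum with summand $C_N^2 2^{2\sigma(Q+1)}2^{-a}2^{2j'-a-b}MM(\Delta_{j'-a}f)MM(\Delta_{j'-b}f)$, $a,b>0$, $a,b\equiv 0$. I would split into the cases $a\le b$ and $a>b$ and, in each, bound the maximal function carrying the larger index-shift by Bernstein's inequality (Proposition~\ref{prop:Bern}), $MM(\Delta_k f)\le C\|\Delta_k f\|_{L^\infty}\le C\|\nabla_b f\|_{L^Q}$; summing the remaining geometric series then produces an extra decay factor (roughly $2^{-s}$ better than $2^{-2s}$ in the surviving index $s$), so that after reindexing $j'=j-l$, $t=l+s$ the $R$-separation makes the sum over $s$ converge to an absolute constant with no loss. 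One is left with $C_N 2^{2\sigma(Q+1)}\|\nabla_b f\|_{L^Q}\sum_{t>0,\ t\equiv 0}2^{2(j-t)}MM(\Delta_{j-t}f)$, and since $\|\nabla_b f\|_{L^Q}\le 1$ by the smallness hypothesis (\ref{eq:smallassump}), this is absorbed into the target bound. Combining the three contributions finishes the proof.

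The step I expect to be the main obstacle is precisely this cross term: unlike the other two sums it is not a clean superposition of single maximal functions, and turning the product $MM(\Delta_{j'-a}f)\,MM(\Delta_{j'-b}f)$ back into a single $MM(\Delta_{j-t}f)$ of the required form requires both Bernstein's inequality and the smallness of $\|\nabla_b f\|_{L^Q}$; everything else is geometric-series bookkeeping on top of the first- and second-order estimates for $g_j$ and $G_j$ established above.
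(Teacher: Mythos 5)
Your proposal is correct and follows essentially the same route as the paper: differentiate the first-order identity (\ref{eq:derivruleH}) once more (resumming as in Section~\ref{sect:alg} so that the derivative landing on $\Pi_{j'}$ reconstitutes $\nabla_b H_{j'}$), bound the resulting three contributions via Propositions~\ref{prop:D2gj}, \ref{prop:D2Gj}, \ref{prop:DGj}, \ref{prop:DHj}, and handle the cross term by splitting the double sum according to which shift is larger and killing the larger-shift maximal function with Bernstein's inequality and the smallness hypothesis (\ref{eq:smallassump}). This is precisely the paper's argument.
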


\begin{proof}
Differentiating (\ref{eq:derivruleH}) once more, again using the way we derived (\ref{eq:derivrule}) from (\ref{eq:Hconstr}), we get
\begin{align*}
 \nabla_b^2 H_j
= \sum_{-K \leq j' < j \atop j' \equiv j (\textrm{mod } R)} \left( \nabla_b [\nabla_b g_{j'}- (\nabla_b G_{j'}) H_{j'}] - (\nabla_b G_{j'})(\nabla_b H_{j'}) \right) \prod_{j' < j'' < j \atop j' \equiv j (\textrm{mod } R)} (1 - G_{j''}).
\end{align*}
Thus $|G_j|\leq 1, |H_j| \leq C$ imply that
\begin{align*}
|\nabla_b^2 H_j|
\leq  & C \sum_{t > 0, j-t > -K \atop t \equiv 0 (\textrm{mod } R)} |\nabla_b^2 g_{j-t}| + |\nabla_b^2 G_{j-t}| + |\nabla_b G_{j-t}| |\nabla_b H_{j-t}|.
\end{align*}
The first two terms can be estimated using Proposition \ref{prop:D2gj} and \ref{prop:D2Gj}. For the last term, Proposition \ref{prop:DGj} and \ref{prop:DHj} give
$$
|\nabla_b G_j| |\nabla_b H_j| \leq C_N 2^{2\sigma(Q+1)} \sum_{t > 0 \atop t \equiv 0 (\textrm{mod } R)} \sum_{l > 0 \atop l \equiv 0 (\textrm{mod } R)} 2^{-t} 2^{2j-t-l} (MM\Delta_{j-t} f)(MM \Delta_{j-l}f).
$$
Now we split the sum into two parts: one where $t > l$, and the other where $l \geq t$, and use $\|\Delta_j f\|_{L^{\infty}} \leq C \|\nabla_b f\|_{L^Q} \leq 1$. In the first sum, we estimate $MM\Delta_{j-t}f$ by a constant; this is possible because $MM \Delta_{j-t} f$ is bounded by $\|\Delta_{j-t} f\|_{L^{\infty}}$, which is bounded by a constant by Bernstein inequality (Proposition~\ref{prop:Bern}) and our assumption (\ref{eq:smallassump}). We then sum $t$ to get a bound
$$ C\sum_{l > 0 \atop l \equiv 0 (\textrm{mod } R)} 2^{-l} 2^{2(j-l)} MM \Delta_{j-l}f. $$
In the second sum, we estimate $MM\Delta_{j-l}f$ by a constant instead, and sum $l$ to get a bound
$$ C\sum_{t > 0 \atop t \equiv 0 (\textrm{mod } R)} 2^{-t} 2^{2(j-t)} MM \Delta_{j-t}f. $$
These two bounds are identical. So
$$
\sum_{t > 0 \atop t \equiv 0 (\textrm{mod } R)} |\nabla_b G_{j-t}| |\nabla_b H_{j-t}|
\leq C_N 2^{2\sigma(Q+1)} \sum_{t > 0 \atop t \equiv 0 (\textrm{mod } R)} \sum_{l > 0 \atop l \equiv 0 (\textrm{mod } R)} 2^{-l} 2^{2(j-t-l)} MM \Delta_{j-t-l}f.
$$
Rearranging we get the desired bound.
\end{proof}

Now we will estimate $$\|\nabla_b (g-\tilde{g}) \|_{L^Q} = \left\| \sum_{|s| \leq K } \nabla_b (G_s H_s) \right\|_{L^Q}.$$ The argument below will show that $\nabla_b (G_s H_s) \in L^Q$ for all $|s| \leq K$, so we could use the reversed Littlewood-Paley inequality in Proposition~\ref{prop:equivLP}, and bounded this by
\begin{align}
\leq &\left\| \left( \sum_{j = -\infty}^{\infty} \left( \sum_{ |s| \leq K } | \Lambda_j \nabla_b (G_s H_s) | \right)^2 \right)^{1/2} \right\|_{L^Q} \notag \\
=&
\left\| \left( \sum_{j = -\infty}^{\infty} \left( \sum_{s \colon |j-s| \leq K } | \Lambda_j \nabla_b (G_{j-s} H_{j-s}) | \right)^2 \right)^{1/2} \right\|_{L^Q} \notag \\
\leq &
\sum_{s = -\infty}^{\infty} \left\| \left( \sum_{ j \colon |j-s| \leq K } | \Lambda_j \nabla_b (G_{j-s} H_{j-s}) |^2 \right)^{1/2} \right\|_{L^Q} \notag \\
=& \sum_{s = -\infty}^{\infty} \left\| \left( \sum_{|j| \leq K } | \Lambda_{j+s} \nabla_b (G_j H_j) |^2 \right)^{1/2} \right\|_{L^Q}. \label{eq:Gjtobeest}
\end{align}
We split the sum into two parts: $\sum_{s \leq R}$ and $\sum_{s > R}$. We shall pick up a convergence factor $2^{-|s|}$ or $|s|2^{-|s|}$ for each term so that we can sum in $s$.

To estimate the first sum, we fix $s \leq R$. Then for each $|j| \leq K$, we split $G_j$ into a sum
$$G_j = G_j^{(1)} + G_j^{(2)},$$
where $$G_j^{(1)} = \sum_{0 < t < |s|, j-t \geq -K \atop t \equiv 0 (\textrm{mod } R)} 2^{-t} \tomega_{j-t},$$
and $$G_j^{(2)} = \sum_{t \geq \max\{|s|,R\}, j-t \geq -K \atop t \equiv 0 (\textrm{mod } R)} 2^{-t} \tomega_{j-t}.$$
Note that the splitting of $G_j$ depends on $s$; in particular, if $-R \leq s \leq R$, then $G_j^{(1)} = 0$ and $G_j^{(2)} = G_j$.

Now we estimate
$$
\left\| \left( \sum_{ |j| \leq K } | \Lambda_{j+s} \nabla_b (G_j^{(1)} H_j) |^2 \right)^{1/2} \right\|_{L^Q}.
$$
We have
\begin{align*}
\Lambda_{j+s} (\nabla_b (G_j^{(1)} H_j))
&= (\nabla_b (G_j^{(1)} H_j))*\Lambda_{j+s} \\
&= 2^{j+s} (G_j^{(1)}H_j) * (\nabla_b^R \Lambda)_{j+s}
\end{align*}
by the compatibility of convolution with the left- and right-invariant derivatives. Hence from $|H_j| \leq C$, and
$$|G_j^{(1)}| \leq C_N 2^{\sigma(Q+1)} \sum_{0 < t < |s| \atop t \equiv 0 (\textrm{mod } R)} 2^{-t} MM(\Delta_{j-t}f)$$
which follows from Proposition \ref{prop:tomegamaximal}, we have
$$
|\Lambda_{j+s} (\nabla_b (G_j^{(1)} H_j))| \leq C_N 2^{\sigma(Q+1)} 2^s \sum_{0 < t < |s| \atop t \equiv 0 (\textrm{mod } R)} 2^{j-t} MMM(\Delta_{j-t}f).
$$
Taking square function in $j$ and then the $L^Q$ norm in space, we obtain that, when $s < -R$,
\begin{align}
\left\| \left( \sum_{|j| \leq K } | \Lambda_{j+s} \nabla_b (G_j^{(1)} H_j) |^2 \right)^{1/2} \right\|_{L^Q}
& \leq C_N 2^{\sigma(Q+1)} \frac{|s|}{R}  2^s \left\| \left(\sum_{j=-\infty}^{\infty} (2^j |\Delta_j f|)^2 \right)^{1/2} \right\|_{L^Q} \notag \\
&\leq C_N 2^{\sigma(Q+1)} \frac{|s|}{R} 2^s \|\nabla_b f\|_{L^Q} \label{eq:Gj1est}.
\end{align}
Here the last inequality follows from Proposition \ref{prop:derivLP}.
The same norm on the left hand side above is of course zero when $-R \leq s \leq R$.

Next, we estimate
$$
\left\| \left( \sum_{|j| \leq K } | \Lambda_{j+s} \nabla_b (G_j^{(2)} H_j) |^2 \right)^{1/2} \right\|_{L^Q} \leq C \left\| \left( \sum_{ |j| \leq K } |\nabla_b (G_j^{(2)} H_j) |^2 \right)^{1/2} \right\|_{L^Q}.
$$
Now by $|H_j|\leq C$,
$$
|\nabla_b(G_j^{(2)} H_j)| \leq C \left(|\nabla_b G_j^{(2)}| + |G_j^{(2)}| |\nabla_b H_j|\right).
$$
We know
\begin{equation} \label{eq:Gj2bd}
|G_j^{(2)}| \leq C_N 2^{\sigma(Q+1)} \sum_{t \geq \max\{|s|,R\} \atop t \equiv 0 (\textrm{mod } R)} 2^{-t} MM(\Delta_{j-t}f)
\end{equation}
by Proposition~\ref{prop:tomegamaximal}, and $|\nabla_b G_j^{(2)}|$ is bounded by $C \sum_{t \geq \max\{|s|,R\} \atop t \equiv 0 (\textrm{mod } R)} 2^{-t} 2^{j-t} \tomega_{j-t}$ by Proposition~\ref{prop:DGj}. Therefore by Proposition~\ref{prop:tomegamaximal} again, we have
$$
|\nabla_b G_j^{(2)}| \leq C_N 2^{\sigma(Q+1)} \sum_{t \geq \max\{|s|,R\} \atop t \equiv 0 (\textrm{mod } R)} 2^{-t} 2^{j-t} MM(\Delta_{j-t}f).
$$
Hence
\begin{align*}
\left\| \left( \sum_{|j| \leq K } |\nabla_b G_j^{(2)} |^2 \right)^{1/2} \right\|_{L^Q}
\leq & C_N 2^{\sigma(Q+1)} \sum_{t \geq \max\{|s|,R\} \atop t \equiv 0 (\textrm{mod } R)} 2^{-t} \left\|\left( \sum_{j = -\infty}^{\infty} |2^j \Delta_j f |^2 \right)^{1/2} \right\|_{L^Q} \\
\leq & C_N 2^{\sigma(Q+1)} 2^{-\max\{|s|,R\}} \|\nabla_b f\|_{L^Q}.
\end{align*}
Furthermore, by (\ref{eq:Gj2bd}) and Proposition \ref{prop:DHj}, one can estimate
$$
|G_j^{(2)}| |\nabla_b H_j| \leq C_N 2^{2\sigma(Q+1)} \sum_{t \geq \max\{|s|,R\} \atop t \equiv 0 (\textrm{mod } R)} \sum_{m > 0 \atop m \equiv 0 (\textrm{mod } R)} 2^{j-t-m} MM(\Delta_{j-t}f) MM(\Delta_{j-m}f).
$$
We split this sum into the sum over three regions of $t$ and $m$: the first one being where $t \geq \max\{|s|,R\}$ and $m > t$;
the second one being where $t \geq \max\{|s|,R\}$ and $t \geq m\geq \max\{|s|,R\}$, which is equivalent to say  
$m \geq \max\{|s|,R\}$ and $t \geq m$;
and the last one being where $0 < |m| < \max \{|s| , R\}$ and $t \geq \max\{|s|,R\}$. The first two sums are basically the same; each can be bounded by
$$\sum_{m \geq \max\{|s|, R\} \atop m \equiv 0 (\textrm{mod } R) } 2^{j-m} MM(\Delta_{j-m}f) \sum_{t \geq m} 2^{-t} MM(\Delta_{j-t}f),$$
which is bounded by
$$ \sum_{m \geq \max\{|s|, R\} \atop m \equiv 0 (\textrm{mod } R) } 2^{-m} 2^{j-m} MM(\Delta_{j-m}f)$$
since we can bound $MM\Delta_{j-t} f$ by a constant (c.f proof of Proposition~\ref{prop:D2Hj}) and take sum in $t$. The last sum is bounded by
$$C 2^{-\max\{|s|,R\}} \sum_{0 < m < \max\{|s|,R\} \atop m \equiv 0 (\textrm{mod } R) } 2^{j-m} MM (\Delta_{j-m}f)$$
for the same reason.
Thus
\begin{align*}
|G_j^{(2)}| |\nabla_b H_j|
&\leq C_N 2^{2\sigma(Q+1)} \left(2 \sum_{m\geq \max\{|s|, R\}  \atop m \equiv 0 (\textrm{mod } R) }2^{-m} 2^{j-m} MM(\Delta_{j-m}f) \right.\\
&\qquad \qquad + \left. 2^{-\max\{|s|,R\}} \sum_{0 < m < \max\{|s|,R\} \atop m \equiv 0 (\textrm{mod } R)} 2^{j-m} MM (\Delta_{j-m}f) \right).\\
\end{align*}
Taking $l^2$ norm in $j$ and then $L^Q$ norm in space, we get
\begin{align*}
\left\| \left( \sum_{ |j| \leq K } |G_j^{(2)} (\nabla_b H_j) |^2 \right)^{1/2} \right\|_{L^Q}
\leq & C_N 2^{2\sigma(Q+1)} \max\{|s|,R\} 2^{-\max\{|s|,R\}} \|\nabla_b f\|_{L^Q},
\end{align*}
It follows that
\begin{align}
\left\| \left( \sum_{ |j| \leq K } | \Lambda_{j+s} \nabla_b (G_j^{(2)} H_j) |^2 \right)^{1/2} \right\|_{L^Q}
\leq & C_N 2^{2\sigma(Q+1)} \max\{|s|,R\} 2^{-\max\{|s|,R\}} \|\nabla_b f\|_{L^Q}. \label{eq:Gj2est}
\end{align}
Summing (\ref{eq:Gj1est}) and (\ref{eq:Gj2est}) over $s \leq R$, we get a bound
$$
C_N 2^{2\sigma(Q+1)} R^2 2^{-R} \|\nabla_b f\|_{L^Q}
$$
for the first half of the sum in (\ref{eq:Gjtobeest}).

Next we look at the second half of the sum in (\ref{eq:Gjtobeest}), that corresponds to the sum over all $s > R$. First,
\begin{equation} \label{eq:Gjslarge}
|\Lambda_{j+s} \nabla_b (G_j H_j)| \leq |\Lambda_{j+s} ((\nabla_b G_j) H_j)| + |\Lambda_{j+s} (G_j (\nabla_b H_j))|.
\end{equation}
The first term can be written as
\begin{align*}
&\int_G ((\nabla_b G_j)(x \cdot y^{-1}) - (\nabla_b G_j)(x)) H_j(x \cdot y^{-1}) \Lambda_{j+s}(y) dy + (\nabla_b G_j)(x) (\Lambda_{j+s} H_j)(x) \\
=& \, I + II.
\end{align*}
The second term in (\ref{eq:Gjslarge}) can be written as
\begin{align*}
&\int_G (G_j(x \cdot y^{-1}) - G_j(x)) (\nabla_b H_j) (x \cdot y^{-1}) \Lambda_{j+s}(y) dy + G_j(x) \Lambda_{j+s} (\nabla_b H_j)(x) \\
=& \, III + IV.
\end{align*}
We estimate $I$, $II$, $III$, $IV$ separately.

First, in $I$, we bound $|H_j| \leq C$, and write
\begin{align*}
&(\nabla_b G_j)(x \cdot y^{-1}) - (\nabla_b G_j)(x) \\
=& 2^{NQ} \sum_{t > 0 \atop t \equiv 0 (\textrm{mod } R)} 2^{-t} 2^{j-t}  \int_G S_{j+N-t}|\Delta_{j-t} f|(x \cdot z^{-1}) \left((\nabla_b E)_{j-t}(z \cdot y^{-1})-(\nabla_b E)_{j-t}(z)\right) dz.
\end{align*}
We put this back in $I$, and thus need to bound
\begin{equation} \label{eq:nablaEintIII}
\int_G \left|(\nabla_b E)_{j-t}(z \cdot y^{-1})-(\nabla_b E)_{j-t}(z)\right| |\Lambda_{j+s}(y)| dy.
\end{equation}
But
$$|\nabla_b^2 E(x)| \leq C E(x) \leq \frac{C}{(1+2^{-\sigma}\|x\|)^K} \leq C 2^{\sigma K} \frac{1}{(1+\|x\|)^K}$$ for all positive integers $K$.
We will use this estimate with $K = 2(Q+1)$, and apply the remark after Proposition \ref{prop:fg}; the integral (\ref{eq:nablaEintIII}) is then bounded by
\begin{align*}
& C 2^{2\sigma(Q+1)} 2^{-s-t} 2^{(j-t)Q} (1+2^{j-t}\|z\|)^{-(Q+1)}.
\end{align*}
Hence
$$
|I| \leq C_N 2^{2\sigma(Q+1)} 2^{-s} \sum_{t > 0 \atop t \equiv 0 (\textrm{mod } R)} 2^{-2t} 2^{j-t}  MM(\Delta_{j-t} f)(x).
$$
Taking square function in $j$ and $L^Q$ norm in space, we get a bound
$$
C_N 2^{2\sigma(Q+1)} 2^{-s} \|\nabla_b f\|_{L^Q}.
$$

For $II$, recall the pointwise bound for $\nabla_b G_j$ from Proposition~\ref{prop:DGj}:
$$
|\nabla_b G_j| \leq C_N 2^{\sigma(Q+1)} \sum_{t > 0 \atop t \equiv 0(\textrm{mod } R)} 2^{-t} 2^{j-t} MM(\Delta_{j-t}f).
$$
To estimate $\Lambda_{j+s} H_j$, we use part (\ref{prop:leftrighta}) of Proposition \ref{prop:leftright}, and write (schematically) $\Lambda$ as $\nabla_b^R \cdot \Phi$ where $\Phi$ is a ($2n$ tuple of) Schwartz function, and integrate by parts. Then
$$
|\Lambda_{j+s} H_j| = 2^{-j-s} |(\nabla_b H_j)*\Phi_{j+s}| \leq 2^{-j-s} \|\nabla_b H_j\|_{L^{\infty}} \leq C_N 2^{\sigma(Q+1)} 2^{-s},
$$
since $\|\nabla_b H_j\|_{L^{\infty}} \leq C_N 2^{\sigma(Q+1)} 2^j$. Hence
$$
|II| \leq C_N 2^{2\sigma(Q+1)} 2^{-s} \sum_{t > 0 \atop t \equiv 0 (\textrm{mod } R)} 2^{-t} 2^{j-t} MM(\Delta_{j-t}f).
$$
Taking square function in $j$ and $L^Q$ norm in space, we get a contribution
$$
C_N 2^{2\sigma(Q+1)} 2^{-s} \|\nabla_b f\|_{L^Q}.
$$

Now to bound $III$, we follow our strategy as in $I$. First we bound $|\nabla_b H_j| \leq C_N 2^{\sigma(Q+1)} 2^j$ by Proposition~\ref{prop:DHj}, and write
\begin{align*}
& G_j(x \cdot y^{-1}) - G_j(x) \\
=& 2^{NQ} \sum_{t > 0 \atop t \equiv 0 (\textrm{mod } R)} 2^{-t} \int_G S_{j+N-t} |\Delta_{j-t} f|(x \cdot z^{-1}) (E_{j-t}(z \cdot y^{-1}) - E_{j-t}(z)) dz.
\end{align*}
We put this back in $III$, and thus need to bound
\begin{equation} \label{eq:nablaEint}
\int_G \left|E_{j-t}(z \cdot y^{-1})-E_{j-t}(z)\right| |\Lambda_{j+s}(y)| dy.
\end{equation}
But
$$|\nabla_b E(x)| \leq C E(x) \leq \frac{C}{(1+2^{-\sigma}\|x\|)^K} \leq C 2^{\sigma K} \frac{1}{(1+\|x\|)^K}$$ for all postive integers $K$. We will take $K = 2(Q+1)$, and apply the remark after Proposition \ref{prop:fg}; the integral (\ref{eq:nablaEint}) is then bounded by
\begin{align*}
& C 2^{\sigma(Q+1)} 2^{-s-t} 2^{(j-t)Q} (1+2^{j-t}\|z\|)^{-(Q+1)}.
\end{align*}
Hence
$$
|III| \leq C_N 2^{2\sigma (Q+1)} 2^{-s} \sum_{t > 0 \atop t \equiv 0 (\textrm{mod } R)} 2^{-t} 2^{j-t}  MM(\Delta_{j-t} f)(x).
$$
Taking square function in $j$ and $L^Q$ norm in space, we get a bound
$$
C_N 2^{2\sigma (Q+1)}  2^{-s} \|\nabla_b f\|_{L^Q}.
$$

Finally, to estimate $IV$, we recall that $|G_j| \leq 1$, as was shown at the beginning of this section. Furthermore,
\begin{align*}
|(\Lambda_{j+s} (\nabla_b H_j))(x)| \leq & |(\nabla_b H_j) * (\nabla_b^R \Phi)_{j+s}(x)| \\
= & 2^{-j-s} | (\nabla_b^2 H_j) * \Phi_{j+s} (x)| \\
 \leq & 2^{-j-s} M(\nabla_b^2 H_j)(x). \\
\end{align*}
By Proposition \ref{prop:D2Hj}, this is bounded by
\begin{align*}
 &  C_N 2^{2\sigma (Q+1)} 2^{-s} \sum_{t > 0 \atop t \equiv 0 (\textrm{mod } R)} 2^{-t} 2^{j-t} MMM(\Delta_{j-t} f)(x).
\end{align*}
Hence
$$
|IV| \leq C_N 2^{2\sigma (Q+1)}  2^{-s} \sum_{t > 0 \atop t \equiv 0 (\textrm{mod } R)} 2^{-t} 2^{j-t} MMM(\Delta_{j-t} f)(x).
$$
Taking square function in $j$ and then $L^Q$ norm in space, this is bounded by
$$
C_N 2^{2\sigma (Q+1)}  2^{-s} \|\nabla_b f\|_{L^Q}.
$$

Hence
$$
\sum_{s > R} \left\| \left( \sum_{|j| \leq K } | \Lambda_{j+s} \nabla_b (G_j H_j) |^2 \right)^{1/2} \right\|_{L^Q}
\leq C_N 2^{2\sigma (Q+1)}  2^{-R} \|\nabla_b f\|_{L^Q}.
$$
Altogether, (\ref{eq:Gjtobeest}) is bounded by
\begin{align*}
& C_N 2^{2\sigma (Q+1)}  R^2 2^{-R} \|\nabla_b f\|_{L^Q}.
\end{align*}
This proves our claim (\ref{eq:g2}), and marks the end of the proof of our approximation Lemma~\ref{lem:approxsub}.

\section{Proof of Theorem \ref{thm:soldbarbstrong} and \ref{thm:subGNstrong}} \label{sect:thms}

In this section we prove Theorem \ref{thm:soldbarbstrong} and \ref{thm:subGNstrong}. We first recall the $\dbarb$ complex on the Heisenberg group $\mathbb{H}^n$.

First, $\mathbb{H}^n$ is a simply connected Lie group diffeomorphic to $\mathbb{R}^{2n+1}$. We write $[x,y,t]$ for a point on $\mathbb{R}^{2n+1}$, where $x, y \in \mathbb{R}^n$ and $t \in \mathbb{R}$. The group law on the Heisenberg group is then given by $$[x,y,t] \cdot [u,v,w] = [x+u, y+v, t+w+2(yu - xv)],$$ where $yu$ is the dot product of $y$ and $u$ in $\mathbb{R}^n$. The left-invariant vector fields of order 1 on $\mathbb{H}^n$ are then linear combinations of the vector fields $X_1, \dots, X_{2n}$, where
$$X_k = \frac{\partial}{\partial x_k} + 2 y_k \frac{\partial}{\partial t} \quad \text{and} \quad X_{k+n} = \frac{\partial}{\partial y_k} - 2 x_k \frac{\partial}{\partial t} \quad \text{for $k=1,\dots,n$}.$$ Thus in this case, $n_1$ is equal to $2n$, and $$\nabla_b f = (X_1 f, \dots, X_{2n} f).$$ The one-parameter family of automorphic dilations on $\mathbb{H}^n$ is given by $$\lambda \cdot [x,y,t] = [\lambda x, \lambda y, \lambda^2 t] \quad \text{for all $\lambda > 0$}.$$ The homogeneous dimension in this case is $Q = 2n+2$.

Now let $$Z_k = \frac{1}{2} (X_k - i X_{k+n}) \quad \text{and} \quad \Zbar_k = \frac{1}{2} (X_k + i X_{k+n}), \quad k = 1, \dots, n.$$ For $0 \leq q \leq n$, the $(0,q)$ forms on the Heisenberg group $\mathbb{H}^n$ are expressions of the form $$\sum_{|\alpha| = q} u_{\alpha} d\zbar^{\alpha},$$ where the sum is over all strictly increasing multi-indices $\alpha = (\alpha_1, \dots, \alpha_q)$ of length $q$ with letters in $\{1, \dots, n\}$; in other words, each $\alpha_k \in \{1, \dots, n\}$, and $\alpha_1 < \alpha_2 < \dots < \alpha_q$. $d\zbar^{\alpha}$ here is a shorthand for $d\zbar_{\alpha_1} \wedge \dots \wedge d\zbar_{\alpha_q}$, and each $u_{\alpha}$ is a smooth function on $\mathbb{H}^n$. The $\dbarb$ complex is then defined by
$$
\dbarb u = \sum_{k=1}^{2n} \sum_{|\alpha| = q} \Zbar_k (u_{\alpha}) d\zbar_k \wedge d\zbar^{\alpha} , \quad \text{if $\sum_{|\alpha| = q} u_{\alpha} d\zbar^{\alpha}$}.
$$
By making the above $d\zbar^{\alpha}$ an orthonormal basis for $(0,q)$ forms at every point, one then has a Hermitian inner product on $(0,q)$ forms at every point on $\mathbb{H}^n$, with which one can define an inner product on the space of $(0,q)$ forms on $\mathbb{H}^n$ that has $L^2$ coefficients. One can then consider the adjoint of $\dbarb$ with respect to this inner product, namely
$$
\dbarb^*u = \sum_{|\alpha| = q} \sum_{k \in \alpha} -Z_k(u_{\alpha}) d\zbar_k \mathrel{\lrcorner} d\zbar^{\alpha};
$$
here the interior product $\mathrel{\lrcorner}$ is just the usual one on $\mathbb{R}^{2n+1}$.

\begin{proof}[Proof of Theorem~\ref{thm:soldbarbstrong}]
The key idea is that when one computes $\dbarb^*$ of a $(0,q+1)$ form on $\mathbb{H}^n$, only $2(q+1)$ of the $2n$ real left-invariant derivatives of order 1 are involved. So if $q + 1 < n$, then for each component of the $q$ form, there will be some real left-invariant derivatives of degree 1 that are irrelevant in computing $\dbarb^*$, and we can give up estimates in those directions when we apply Lemma \ref{lem:approxsub}.

We will use the bounded inverse theorem and an argument closely related to the usual proof of the open mapping theorem.

Let $\dot{NL}^{1,Q} (\Lambda^{(0,q+1)})$ be the space of $(0,q+1)$ forms on $\mathbb{H}^n$ with $\dot{NL}^{1,Q}$ coefficients, and similarly define $L^Q (\Lambda^{(0,q)})$. Consider the map $\dbarb^* \colon \dot{NL}^{1,Q} (\Lambda^{(0,q+1)}) \to L^Q (\Lambda^{(0,q)})$.
It is bounded and has closed range. Hence it induces a bounded linear bijection between the Banach spaces
$\dot{NL}^{1,Q} (\Lambda^{(0,q+1)}) / \text{ker($\dbarb^*$)}$ and $\text{Image($\dbarb^*$)}
\subseteq L^Q (\Lambda^{(0,q)})$.
By the bounded inverse theorem, this map has a bounded inverse; hence for any $(0,q)$ form $f \in \text{Image($\dbarb^*$)} \subseteq L^Q (\Lambda^{(0,q)})$, there exists $\alpha^{(0)} \in \dot{NL}^{1,Q} (\Lambda^{(0,q+1)}) $ such that
$$
\begin{cases}
\dbarb^*\alpha^{(0)} = f\\
\|\nabla_b \alpha^{(0)}\|_{L^Q} \leq C \|f\|_{L^Q}.
\end{cases}
$$
Now for $q < n-1$, if $I$ is a multi-index of length $q+1$, then one can pick $i \notin I$ and approximate $\alpha^{(0)}_I$
by Lemma \ref{lem:approxsub} in all but the $X_i$ direction; more precisely, for any $\delta > 0$,
there exists $\beta^{(0)}_I \in \dot{NL}^{1,Q} \cap L^{\infty}$ such that
$$
\sum_{j \ne i} \left\| X_j \left(\alpha^{(0)}_I - \beta^{(0)}_I\right) \right\|_{L^Q}
\leq \delta \left\|\nabla_b \alpha^{(0)}_I\right\|_{L^Q} \leq C \delta \left\|f \right\|_{L^Q}
$$
and
$$
\left\| \beta^{(0)}_I \right\|_{L^{\infty}} + \left\| \nabla_b \beta^{(0)}_I \right\|_{L^Q}
\leq A_{\delta} \left\| \nabla_b \alpha^{(0)}_I \right\|_{L^Q} \leq C A_{\delta} \left\|f\right\|_{L^Q}.
$$
Then if $\delta$ is picked so that $C \delta \leq \frac{1}{2}$, we have
$\beta^{(0)} := \sum_I \beta^{(0)}_I d\zbar^I \in \dot{NL}^{1,Q} \cap L^{\infty} (\Lambda^{0,q+1})$ satisfying
$$
\begin{cases}
\|f - \dbarb^* \beta^{(0)}\|_{L^Q} \leq \frac{1}{2} \|f \|_{L^Q} \\
\|\beta^{(0)}\|_{L^{\infty}} + \|\nabla_b \beta^{(0)}\|_{L^n} \leq A \|f\|_{L^Q}
\end{cases}
$$
(the first equation holds because $\|f-\dbarb^*\beta^{(0)}\|_{L^Q} = \|\dbarb^*(\alpha^{(0)} - \beta^{(0)})\|_{L^Q}$, and $A$ here is a fixed constant).
In other words, we have sacrificed the property $f = \dbarb^*\alpha^{(0)}$ by replacing $\alpha^{(0)} \in \dot{NL}^{1,Q}$
with $\beta^{(0)}$, which in addition to being in $\dot{NL}^{1,Q}$ is in $L^{\infty}$. Now we repeat the process,
with $f-\dbarb^*\beta^{(0)}$ in place of $f$, so that we obtain $\beta^{(1)}\in \dot{NL}^{1,Q} \cap L^{\infty} (\Lambda^{0,q+1})$ with
$$
\begin{cases}
\|f-\dbarb^*\beta^{(0)}-\dbarb^*\beta^{(1)}\|_{L^Q} \leq \frac{1}{2} \|f-\dbarb^*\beta^{(0)}\|_{L^Q} \leq \frac{1}{2^2} \|f\|_{L^Q} \\
\|\beta^{(1)}\|_{L^{\infty}} + \|\nabla_b \beta^{(1)}\|_{L^Q} \leq A \|f-\dbarb^*\beta^{(0)}\|_{L^Q} \leq \frac{A}{2} \|f\|_{L^Q}.
\end{cases}
$$
Iterating, we get $\beta^{(k)}\in \dot{NL}^{1,Q} \cap L^{\infty} (\Lambda^{0,q+1})$ such that
$$
\begin{cases}
\|\beta-\dbarb^*(\beta^{(0)}+\dots+\beta^{(k)})\|_{L^Q} \leq \frac{1}{2^{k+1}} \|f\|_{L^Q} \\
\|\beta^{(k)}\|_{L^{\infty}} + \|\nabla_b \beta^{(k)}\|_{L^Q} \leq \frac{A}{2^k} \|f\|_{L^Q}.
\end{cases}
$$
Hence
$$
Y = \sum_{k=0}^{\infty} \beta^{(k)}
$$
satisfies $Y \in \dot{NL}^{1,Q} \cap L^{\infty} (\Lambda^{0,q+1})$ with
$$
\begin{cases}
\dbarb^*Y = f \\
\|Y\|_{L^{\infty}} + \|\nabla_b Y\|_{L^Q} \leq 2A \|f\|_{L^Q}
\end{cases}
$$
as desired.
\end{proof}

We mention that by the duality between $(0,q)$ forms and $(0,n-q)$ forms, we have the following Corollary for solving $\dbarb$ on $\mathbb{H}^n$:

\begin{cor} \label{cor:solvedbarb}
Suppose $q \ne 1$. Then for any $(0,q)$-form $f$ on $\mathbb{H}^n$ that has coefficients in $L^Q$ and that is the $\dbarb$ of some other form on $\mathbb{H}^n$ with coefficients in $\dot{NL}^{1,Q}$, there exists a $(0,q-1)$-form $Y$ on $\mathbb{H}^n$ with coefficients in $L^{\infty} \cap \dot{NL}^{1,Q}$ such that $$\dbarb Y = f$$ in the sense of distributions, with $\|Y\|_{L^{\infty}} + \|\nabla_b Y\|_{L^Q} \leq C \|f\|_{L^Q}.$
\end{cor}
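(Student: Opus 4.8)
The plan is to derive Corollary~\ref{cor:solvedbarb} from Theorem~\ref{thm:soldbarbstrong} by the same duality that relates the solvability of $\dbarb$ to that of $\dbarb^*$. The device I would use is the fiberwise conjugate-linear Hodge-type operator $\star$ on the $\dbarb$-complex of $\mathbb{H}^n$: for a strictly increasing multi-index $\alpha$ of length $q$, with complementary increasing multi-index $\alpha^c$ of length $n-q$, set $\star(d\zbar^\alpha) := \varepsilon(\alpha)\, d\zbar^{\alpha^c}$, where $\varepsilon(\alpha)\in\{+1,-1\}$ is determined by $d\zbar^\alpha\wedge d\zbar^{\alpha^c} = \varepsilon(\alpha)\, d\zbar_1\wedge\cdots\wedge d\zbar_n$, and extend conjugate-linearly over coefficients, i.e. $\star\bigl(\sum_{|\alpha|=q} u_\alpha\, d\zbar^\alpha\bigr) := \sum_{|\alpha|=q}\varepsilon(\alpha)\,\overline{u_\alpha}\, d\zbar^{\alpha^c}$. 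First I would record the three properties that make the argument go. (i) Since $\{d\zbar^\alpha\}$ is a pointwise orthonormal frame, $\star$ is a fiberwise isometry, hence preserves the $L^p$ norm of the coefficient vector for every $p$; in particular it maps forms with $L^Q$ coefficients to forms with $L^Q$ coefficients and similarly for $L^\infty$, isometrically. (ii) On coefficients $\star$ acts by a fixed (coefficient-independent) signed permutation followed by complex conjugation; since each $X_j$ has real coefficients we have $X_j(\overline{u}) = \overline{X_j u}$, so $\star$ commutes with every left-invariant vector field $X_j$, and therefore maps $\dot{NL}^{1,Q}$ forms to $\dot{NL}^{1,Q}$ forms with $\|\nabla_b(\star\beta)\|_{L^Q} = \|\nabla_b\beta\|_{L^Q}$, and respects the distributional meaning of these identities. (iii) $\star\star = \pm\,\mathrm{id}$, and $\dbarb^*\circ\star = \pm\,\star\circ\,\dbarb$ (equivalently $\dbarb\circ\star = \pm\,\star\circ\,\dbarb^*$). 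Property (iii) is the only genuine computation: one expands both sides of $\dbarb^*\star u$ and $\star\dbarb u$ in the frame $\{d\zbar^\alpha\}$, using $Z_k\overline{u} = \overline{\Zbar_k u}$ and the explicit formulas for $\dbarb$ and $\dbarb^*$ recalled above, and matches coefficients; it is the CR analog of the Euclidean $d^* = \pm\star d\star$.

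Granting this, the proof is immediate. Let $f = \sum_{|I|=q} f_I\, d\zbar^I$ be a $(0,q)$ form with coefficients in $L^Q$ that equals $\dbarb\alpha$ for some $(0,q-1)$ form $\alpha$ with coefficients in $\dot{NL}^{1,Q}$; we may assume $q\ge 2$. Put $g := \star f$, a $(0,n-q)$ form with coefficients in $L^Q$ and $\|g\|_{L^Q} = \|f\|_{L^Q}$ by (i). By (iii) and (ii), $g = \star\dbarb\alpha = \pm\,\dbarb^*(\star\alpha)$, where $\star\alpha$ is a $(0,n-q+1)$ form with coefficients in $\dot{NL}^{1,Q}$; hence $g$ lies in the image of $\dbarb^*$ on forms with $\dot{NL}^{1,Q}$ coefficients. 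Since $q\ne 1$, we have $n-q\ne n-1$, so Theorem~\ref{thm:soldbarbstrong} applies to the $(0,n-q)$ form $g$: there is a $(0,n-q+1)$ form $Z$ with coefficients in $L^\infty\cap\dot{NL}^{1,Q}$ such that $\dbarb^* Z = g$ and $\|Z\|_{L^\infty} + \|\nabla_b Z\|_{L^Q}\le C\|g\|_{L^Q} = C\|f\|_{L^Q}$. Set $Y := \star Z$, a $(0,q-1)$ form whose coefficients lie in $L^\infty\cap\dot{NL}^{1,Q}$ by (i) and (ii). Then, by (iii),
$$
\dbarb Y = \pm\,\star\,\dbarb^* Z = \pm\,\star g = \pm\,\star\,\star f = \pm f,
$$
so after replacing $Y$ by $-Y$ if necessary we obtain $\dbarb Y = f$ in the sense of distributions, while $\|Y\|_{L^\infty} + \|\nabla_b Y\|_{L^Q} = \|Z\|_{L^\infty} + \|\nabla_b Z\|_{L^Q}\le C\|f\|_{L^Q}$.

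The only obstacle is bookkeeping of signs: fixing the conventions for $\varepsilon(\alpha)$, for the adjoint $\dbarb^*$, and for the interior products, and then checking that $\dbarb^*\star = \pm\star\dbarb$ holds with a single sign valid for all $u$, so that the chain of equalities for $\dbarb Y$ closes up (the residual overall sign is harmless since we may always pass from $Y$ to $-Y$). All the functional-analytic content --- the mapping properties of $\star$ on $L^Q$, $L^\infty$, $\dot{NL}^{1,Q}$, and the compatibility with distributional derivatives --- is automatic from the fact that $\star$ is a fixed unitary acting on coefficient vectors and commutes with left translations of $\mathbb{H}^n$.
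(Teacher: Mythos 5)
Your proof is correct and takes exactly the approach the paper has in mind: the paper simply asserts that Corollary~\ref{cor:solvedbarb} follows ``by the duality between $(0,q)$ forms and $(0,n-q)$ forms'' and gives no further details. Your conjugate-linear Hodge-type $\star$ operator, together with the three properties (fiberwise isometry, commutation with $\nabla_b$, and $\dbarb^*\circ\star=\pm\star\circ\dbarb$), is precisely the intended realization of that duality, and you have also correctly matched the degree restriction $q\ne 1$ in the corollary to $n-q\ne n-1$ in Theorem~\ref{thm:soldbarbstrong}.
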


\begin{proof}[Proof of Theorem~\ref{thm:subGNstrong}]
We use duality and the Hodge decomposition for $\dbarb$. Suppose first $u$ is a $C^{\infty}_c$ $(0,q)$ form on $\mathbb{H}^n$ with $2 \leq q \leq n-2$. We test it against a $(0,q)$ form $\phi \in C^{\infty}_c$. Now $$\phi = \dbarb^* \alpha + \dbarb \beta$$ by Hodge decomposition for $\dbarb$ on $\mathbb{H}^n$, where $$\|\nabla_b \alpha\|_{L^Q} + \|\nabla_b \beta\|_{L^Q} \leq C\|\phi\|_{L^Q}.$$ Apply Theorem~\ref{thm:soldbarbstrong} to $\dbarb^* \alpha$ and Corollary~\ref{cor:solvedbarb} to $\dbarb \beta$, we get $$\phi = \dbarb^* \tilde{\alpha} + \dbarb \tilde{\beta}$$ where $\tilde{\alpha}$ and $\tilde{\beta}$ have coefficients in $\dot{NL}^{1,Q} \cap L^{\infty}$, with bounds
$$
\|\nabla_b \tilde{\alpha}\|_{L^Q} + \|\tilde{\alpha}\|_{L^{\infty}} \leq C \|\dbarb^* \alpha\|_{L^Q} \leq C \|\phi\|_{L^Q},
$$
$$
\|\nabla_b \tilde{\beta}\|_{L^Q} + \|\tilde{\beta}\|_{L^{\infty}} \leq C \|\dbarb \beta\|_{L^Q} \leq C \|\phi\|_{L^Q}.
$$
Thus
\begin{align*}
(u,\phi) &= (u, \dbarb^* \tilde{\alpha}) + (u, \dbarb \tilde{\beta}) \\
&= (\dbarb u, \tilde{\alpha}) + (\dbarb^*u, \tilde{\beta}) \\
&\leq \|\dbarb u\|_{L^1 + (\dot{NL}^{1,Q})^*} \|\tilde{\alpha}\|_{L^{\infty} \cap \dot{NL}^{1,Q}} + \|\dbarb^* u\|_{L^1 + (\dot{NL}^{1,Q})^*} \|\tilde{\beta}\|_{L^{\infty} \cap \dot{NL}^{1,Q}} \\
&\leq C (\|\dbarb u\|_{L^1 + (\dot{NL}^{1,Q})^*} + \|\dbarb^* u\|_{L^1 + (\dot{NL}^{1,Q})^*}) \|\phi\|_{L^Q}.
\end{align*}
This proves the desired inequality (\ref{eq:GNdbarbq}).

The proof of (\ref{eq:GNdbarb0}) for functions $u$ orthogonal to the kernel of $\dbarb$ is similar, which we omit.
\end{proof}

\bigskip
\footnotesize
\noindent\textit{Acknowledgments.}
The authors would like to thank E. Stein for originally suggesting this problem, H. Brezis for his encouragement and interest in our work, and the referee who read the manuscript very carefully and gave us many helpful suggestions.

The first author is supported in part by NSF grant DMS 1205350. The second author is supported in part by NSF grant DMS 1201474.

\end{document}